\documentclass[reqno,12pt]{amsart}
\headheight=8pt     \topmargin=0pt
\textheight=632pt   \textwidth=432pt
\oddsidemargin=18pt \evensidemargin=18pt

% \numberwithin{equation}{section}

%%%%%%%%%%%%%%%%%%%%%%%%%%%%%% Textclass specific LaTeX commands.

%\renewcommand{\thefigure}{\arabic{figure}}

\theoremstyle{plain}
\newtheorem{thm}{Theorem}[section]
\newtheorem{thmsub}{Theorem}[subsection]
\newtheorem{cor}[thm]{Corollary}
\newtheorem{corsub}[thmsub]{Corollary}
\newtheorem{lemma}[thm]{Lemma} %%Delete [thm] to re-start numbering
\newtheorem{lemmasub}[thmsub]{Lemma}
\newtheorem{prop}[thm]{Proposition}
\newtheorem{propsub}[thmsub]{Proposition}

\theoremstyle{remark}

\newtheorem{remark}[thm]{Remark}
\newtheorem{remarksub}[thmsub]{Remark}
\newtheorem{remarks}[thm]{Remarks}

\theoremstyle{definition}
\newtheorem{defi}[thm]{Definition}
\newtheorem{defisub}[thmsub]{Definition}
\newtheorem{example}[thm]{Example}
\newtheorem{examplesub}[thmsub]{Example}

\newtheorem{obssub}[thmsub]{Observation}

\newtheorem{ques}[thm]{Question}
\newtheorem{quessub}[thmsub]{Question}

%%%%%%%%%%%%%%%%%%%%%%%%%%%%%% User specified LaTeX commands.
\usepackage{amscd,amssymb,comment,epic,eepic,euscript,graphics}
\usepackage{enumerate}
\usepackage[initials]{amsrefs}
\usepackage{stmaryrd}
\usepackage[all]{xy}
\usepackage{color}

% \sloppy
%%%%%%%%%%%%%%%%%%%%%%%%%%%%% Ken's Latex commands.

%\input /u/kdykema/m/mymacros.ltx
%\input /Users/kdykema/m/mymacros.ltx
%%%%%%%%%%%%%%%%%%% plain tex for time and date
\newcount\theTime
\newcount\theHour
\newcount\theMinute
\newcount\theMinuteTens
\newcount\theScratch
\theTime=\number\time
\theHour=\theTime
\divide\theHour by 60
\theScratch=\theHour
\multiply\theScratch by 60
\theMinute=\theTime
\advance\theMinute by -\theScratch
\theMinuteTens=\theMinute
\divide\theMinuteTens by 10
\theScratch=\theMinuteTens
\multiply\theScratch by 10
\advance\theMinute by -\theScratch
\def\timeHHMM{{\number\theHour:\number\theMinuteTens\number\theMinute}}

\def\today{{\number\day\space
 \ifcase\month\or
  January\or February\or March\or April\or May\or June\or
  July\or August\or September\or October\or November\or December\fi
 \space\number\year}}

\def\timeanddate{{\timeHHMM\space o'clock, \today}}
%%%%%%%%%%%%%%%%%%%%%%%%%%%%%%%%%
\newcommand\Ac{{\mathcal{A}}}
\newcommand\Ad{{\rm Ad}}
\newcommand\Afr{{\mathfrak A}}
\newcommand\ah{{\hat a}}

\newcommand\Aut{{\rm Aut}}
\newcommand\At{{\widetilde A}}
\newcommand\Bc{{\mathcal{B}}}
\newcommand\betah{{\hat\beta}}
\newcommand\Bfr{{\mathfrak B}}
\newcommand\bh{{\hat b}}
\newcommand\Cc{{\mathcal{C}}}

\newcommand\conv{\operatorname{conv}}
\newcommand\Cpx{{\mathbf C}}
\newcommand\Dc{{\mathcal{D}}}

\newcommand\Dt{{\widetilde D}}
\newcommand\eh{{\hat e}}
\newcommand\End{{\rm End}}
\newcommand\eps{\epsilon}

\newcommand\ev{{\operatorname{ev}}}

\newcommand\Fc{{\mathcal{F}}}

\newcommand\freeprod{\operatornamewithlimits{\ast}}

\newcommand\HEu{{\EuScript H}}                   % requires package euscript
\newcommand\id{{\operatorname{id}}}
\newcommand\KEu{{\EuScript K}}                   % requires package euscript
\newcommand\KEut{{\widetilde{\KEu}}}            % requires package euscript
\newcommand\Kt{{\widetilde K}}
\newcommand\lambdat{{\tilde\lambda}}
\newcommand\lspan{\mathrm{span}\,}
\newcommand\Mcal{{\mathcal{M}}}
\newcommand\Nats{{\mathbf N}}
\newcommand\Nc{{\mathcal{N}}}
\newcommand\NC{\operatorname{NC}}
\newcommand\oneh{{\hat 1}}
\newcommand\oup{^{\mathrm o}}
\newcommand\Pc{{\mathcal{P}}}

\newcommand\phichk{{\check\phi}}

\newcommand\phit{{\tilde\phi}}
\newcommand\pit{{\tilde\pi}}

\newcommand\psih{{\hat\psi}}

\newcommand\qchk{{\check q}}
\newcommand\QEu{{\EuScript Q}}
\newcommand\QSS{{\operatorname{QSS}}}
\newcommand\restrict{{\upharpoonright}}
\newcommand\REu{{\EuScript R}}                   % requires package euscript
\newcommand\rhoh{{\hat\rho}}

\newcommand\SSt{{\operatorname{SS}}}
\newcommand\stalg{{*\operatorname{-alg}}}
\newcommand\tauchk{{\check\tau}}
\newcommand\Tc{{\mathcal{T}}}
\newcommand\TQSS{{\operatorname{TQSS}}}
\newcommand\tr{{\mathrm{tr}}}
\newcommand\TVEu{{\EuScript{T\hspace{0.05em}V}}}
\newcommand\TWEu{{\EuScript{T\hspace{0.05em}W}}}
\newcommand\ut{{\tilde u}}
\newcommand\Vc{{\mathcal{V}}}
\newcommand\VEu{{\EuScript V}}                   % requires package euscript
\newcommand\vh{{\hat v}}
\newcommand\Wc{{\mathcal{W}}}
\newcommand\wh{{\hat w}}
\newcommand\xh{{\hat x}}
\newcommand\yh{{\hat y}}
\newcommand\zh{{\hat z}}
\newcommand\ZQSS{{\operatorname{ZQSS}}}
\newcommand\ZTQSS{{\operatorname{ZTQSS}}}

\begin{document}

\title[Quantum symmetric states, \timeanddate]{Quantum symmetric states on free product $C^*$-algebras}

\author[Dykema]{Kenneth J.\ Dykema$^{*}$}
\address{K.\ Dykema, Department of Mathematics, Texas A\&M University,
College Station, TX 77843-3368, USA}
\email{kdykema@math.tamu.edu}
\thanks{\footnotesize $^{*}$Research supported in part by NSF grant DMS-1202660.
$^\dag$Research supported in part by EPSRC grant EP/H016708/1}

\author[K\"ostler]{Claus K\"ostler$^\dag$}
\address{C.\ K\"ostler,
School of Mathematical Sciences, Western Gateway Building, Western Road, University College Cork,
Cork, Ireland}
\email{claus@ucc.ie}

\author[Williams, \timeanddate]{John D.\ Williams}
\address{J.\ Williams, Department of Mathematics,
Fachrichtung Mathematik, Universit\"at des Saarlandes, 
Campus E24, 66123 Saarbr\"ucken, Germany}
\email{williams@math.uni-sb.de}

\subjclass[2000]{46L53 (46L54, 81S25, 46L10)}
\keywords{quantum symmetric states, quantum exchangeable, de Finetti theorem, amalgamated free product}

\date{22 September 2014}

\begin{abstract}
We introduce symmetric states and quantum symmetric states on universal unital free product $C^*$-algebras of the form
$\Afr=\freeprod_1^\infty A$
for an arbitrary unital $C^*$-algebra $A$, as a generalization of the notions of exchangeable and quantum exchangeable random variables.
We prove existence of conditional expectations onto tail algebras in various settings and we define a natural $C^*$-subalgebra of the tail
algebra, called the tail $C^*$-algebra.
Extending and building on the proof of the noncommutative de Finetti theorem of K\"ostler and Speicher,
we prove a de Finetti type theorem that
characterizes quantum symmetric states in terms of amalgamated free products over the tail $C^*$-algebra,
and we provide a convenient description of the set of all quantum symmetric states on $\Afr$ in terms of 
$C^*$-algebras generated by homomorphic images of $A$ and the tail $C^*$-algebra.
This description allows a characterization of the extreme quantum symmetric states.
Similar results are proved for the subset of tracial quantum symmetric states, though in terms of von Neumann algebras
and normal conditional expectations.
The central quantum symmetric states are those for which the tail algebra is in the center of the von Neumann algebra, and we
show that the central quantum symmetric states form a Choquet simplex whose extreme points are the free product states,
while the tracial central quantum symmetric states form a Choquet simplex whose extreme points are the free product traces.
\end{abstract}

\maketitle

\section{Introduction}

Classical random variables $x_1,x_2,\ldots$ are said to be exchangeable
if their joint distribution is unchanged by permuting the variables.
The classical de Finetti theorem characterizes exchangeable random variables as those that are identically distributed
and conditionally independent over their tail $\sigma$-algebra (see, e.g.,~\cite{Ka05}) and
Hewitt and Savage~\cite{HS55} rephrased this theorem in terms of symmetric measures on the infinite Cartesian product
of a compact Hausdorff space.
St\o{}rmer~\cite{St69} extended the purview and the result to the realm of $C^*$-algebras;
he considered the minimal tensor product $B=\bigotimes_1^\infty A$ of a unital $C^*$-algebra $A$ with itself
infinitely many times and defined a state on $B$ to be symmetric if it is invariant under all automorphisms of $B$ that result
from permuting the tensor factors (it is clearly equivalent for this purpose to speak of either finite permutations or
arbitrary permutations of the natural numbers).
He showed that symmetric states correspond to mixtures of infinite tensor product states; $\otimes_1^\infty\phi$,
where $\phi$ is a state on $A$.
In other words, these infinite tensor product states are the extreme points of the compact convex set of symmetric states on $B$,
and, therefore, every symetric state on $B$ is an integral with respect to a probability measure of infinite tensor product states.
Furthermore, St\o{}rmer showed that the symmetric states on $B$ form a Choquet simplex, so this probability measure is unique.

The noncommutative de Finetti theorem was discovered by K\"ostler and Speicher in~\cite{KSp09}.
It concerns random variables $x_1,x_2,\ldots$ in a $W^*$-noncommutative probability space $(\Mcal,\phi)$,
which consists of a possibly noncommutative unital $W^*$-algebra $\Mcal$ and a normal state $\phi$ on $\Mcal$,
which in~\cite{KSp09} was assumed to be faithful;
the random variables are elements $x_j\in\Mcal$, and are often called noncommutative random variables
to emphasize that they need not commute with each other.
These noncommutative random variables are said to be exchangeable (or classically exchangeable) if
the joint distribution of them is invariant under permutations, namely, if
\[
\phi(x_{i_1}x_{i_2}\cdots x_{i_n})=\phi(x_{i_{\sigma(1)}}x_{i_{\sigma(2)}}\cdots x_{i_{\sigma(n)}})
\]
for all $n\in\Nats$, $i_1,\ldots,i_n\in\Nats$ and all permutations $\sigma$ of $\Nats$.
They are said to be quantum exchangeable if they are invariant under quantum permutations
(see~\cite{KSp09} or \S\ref{sec:prelims} below), namely,
under the natural co-action of the quantum permutation group of S.\ Wang~\cite{W98}.
K\"ostler and Speicher showed that noncommutative
random variables are quantum exchangeable if and only if they are free with amalgamation
over the tail algebra of the sequence.
In~\cite{DK}, the first two authors showed that every countably generated von Neumann algebra arises as the tail algebra of
a quantum exchangeable sequence of random variables.
In~\cite{K10} and~\cite{GK09}, exchangeable sequences and the more general notions of spreadability and braidability, respectively,
are investigated in the context of a von Neumann algebra equipped with a normal faithful state.

In this paper, we investigate exchangeability and quantum exchangeability for sequences of representations of a unital $C^*$-algebra $A$.
This naturally results in the notion of symmetric states and of quantum symmetric states on the universal unital $C^*$-algebra free product
$\Afr=\freeprod_{i=1}^\infty A$ of $A$ with itself infinitely many times (see \S\ref{sec:prelims} for definitions).
For a symmetric state $\psi$, we consider the natural notion of a tail algebra (see \S\ref{sec:tails}), which is a certain von Neumann subalgebra $\Tc_\psi$
of the von Neumann algebra $\Mcal_\psi$
generated by the image of $\Afr$ under Gelfand--Naimark--Segal (or GNS) representation of $\psi$.
We prove (\S\ref{subsec:tail}) the existence of a conditional expectation from a certain weakly dense $C^*$-subalgebra of $\Mcal_\psi$ onto $\Tc_\psi$
and we investigate when this extends to a normal conditional expectation from $\Mcal_\psi$ onto $\Tc_\psi$.
We show (\S\ref{subsec:normal})
that we can always find a normal conditional expectation whose domain is a slightly larger von Neumann algebra $\Nc_\psi$.
We also define (\S\ref{subsec:tailC*})
the tail $C^*$-algebra, which is the smallest $C^*$-subalgebra of $\Tc_\psi$ for which the conditional expectation can exist.
We also partially characterize (\S\ref{subsec:ex}) the extreme symmetric states.

We prove
an analogue of the noncommutative de Finetti theorem (see \S\ref{sec:freenessYields} and \S\ref{sec:freeness}),
namely, that the images of the copies of $A$ in the Gelfand--Naimark--Segal construction of the quantum symmetric state
are free with amalgamation over the tail algebra, in both $C^*$- and $W^*$-versions.
These are all natural extensions of the notions discussed above and many of our proofs follow closely those of~\cite{KSp09}.
K\"ostler and Speicher's noncommutative de Finetti theorem can be seen as a special case of ours,
by taking $A$ to be a suitable abelian $C^*$-algebra and requiring faithfulness of the state.

A similar extension of the noncommutative de Finetti theorem was obtained by Curran~\cite{C09}.
He considered quantum exchangeability of $*$-representations of a $*$-algebra and proved that, for an infinite sequence of such,
freeness with amalgamation over a tail algebra holds.
He actually considered the more delicate situation of quantum exchangeability of finite sequences of $*$-representations and obtained
the result for infinite sequences as a limiting case.
However, he did assume faithfulness of the state in the $W^*$-noncommutative probability space.
Although our results up to this point overlap with and are similar in spirit to Curran's, we have included our proof
because (a) it is different from that contained in~\cite{C09} (and closer to the proof of~\cite{KSp09}) and
(b) we must allow the states on the relevant $W^*$-algebras to be non-faithful.

To summarize, our results on quantum symmetric states as described up to this point are an extension to the realm of unital $C^*$-algebras of previous results
about quantum exchangeable random variables,
analogous to what St\o{}rmer did in~\cite{St69} for the classical de Finetti theorem.
An advantage of this extension
is that it allows for treatment of the situation of non-faithful states in the $C^*$-noncommutative
probability spaces (thus, overcoming a limitation of~\cite{KSp09}, \cite{C09} and~\cite{DK}).

Investigating further, we obtain a nice classification of quantum symmetric states on $\Afr$;
they are in bijection with the quintuples $(B,D,F,\theta,\rho)$, where $D\subseteq B$ is a unital inclusion of $C^*$-algebras,
$F:B\to D$ is a conditional expectation with faithful GNS representation, 
$\theta:A\to B$ is a unital $*$-homomorphism so that certain generation and minimality
properties are satisfied, and $\rho$ is a state on $D$ (see \S\ref{sec:descr}).
In this correspondence, $D$ plays the role of the tail $C^*$-algebra of the quantum symmetric state.
For tracial quantum symmetric states, we also have a similar characterization using von Neumann algebras.

One of our main goals in this paper
is the characterization of extreme points of the compact convex sets of quantum symmetric states
and of tracial quantum symmetric states, which is accomplished in Theorems~\ref{thm:extrQSS} and~\ref{thm:extrTQSS}
using the aforementioned classification found in~\S\ref{sec:descr}.

Finally, in~\S\ref{sec:ZQSS}, we consider the quantum symmetric states whose tail algebras are in the center of
the von Neumann algebra generated by the image of the GNS representation.
We call these the central quantum symmetric states, and show that they form a Choquet simplex, whose extreme points
are the free product states.
We show that also the central quantum symmetric states that are tracial form a Choquet simplex, and its extreme
points are the free product tracial states.

\medskip
\noindent
{\bf Acknowledgement.}
The authors thank Weihua Liu for pointing out a mistake in an earlier version of this paper (regarding existence of normal conditional expectations)
and for showing us Example~\ref{ex:Liu}.

\section{Preliminaries and Definitions}
\label{sec:prelims}

The quantum permutation group (on $k$ letters) was found by S.\ Wang~\cite{W98}.
It is the universal unital $C^*$-algebra $A_s(k)$
generated projections $u_{ij}$ (for $1\le i,j\le k$)  satisfying $\sum_{i=1}^ku_{ij}=1$ for all $j$ and
$\sum_{j=1}^ku_{ij}=1$ for all $i$.
The abelian $C^*$-algebra $C(S_k)$ of continuous functions on the symmetric group $S_k$ on $k$ letters
is a quotient of $A_s(k)$ under the map (the standard ``abelianization'' representation)
that sends $u_{ij}$ to the characteristic function of the set of permutations
that send $j$ to $i$.

\begin{defi}\label{def:exch}
Let $A$ be a unital
$C^*$-algebra and 
let $(\Bfr,\phi)$ be a $C^*$-noncommutative probability space, namely, a unital $C^*$-algebra $\Bfr$ equipped
with a state $\phi$.
For every $i\in\Nats$ let $\pi_i:A\to\Bfr$ be a unital $*$-homomorphism.
We say $(\pi_i)_{i=1}^\infty$ is {\em exchangeable} (or classically exchangeable)
with respect to $\phi$ if for all permutations
$\sigma$ of $\Nats$,
all $n\in\Nats$, all $i(1),\ldots,i(n)\in\Nats$ and all $a_1,\ldots,a_n\in A$,
we have
\begin{equation}\label{eq:exch}
\phi(\pi_{i(1)}(a_1)\cdots\pi_{i(n)}(a_n))=\phi(\pi_{\sigma(i(1))}(a_1)\cdots\pi_{\sigma(i(n))}(a_n)).
\end{equation}
We say $(\pi_i)_{i=1}^\infty$ is {\em quantum exchangeable} with respect to $\phi$ if for all $k\in\Nats$,
the $*$-homomorphisms $\pit_1,\ldots,\pit_k$ have the same distribution with respect to $\id\otimes\phi$
on $A_s(k)\otimes\Bfr$ as the $*$-homomorphisms $\pi_1,\ldots\pi_k$ do with respect to $\phi$ on $\Bfr$,
where
\begin{equation}\label{eq:pit}
\pit_i=\sum_{j=1}^ku_{i,j}\otimes\pi_j:A\to A_s(k)\otimes\Bfr.
\end{equation}
This means that for all $n\in\Nats$, all $i(1),\ldots,i(n)\in\{1\ldots,k\}$ and all $a_1,\ldots,a_n\in A$,
we have
\begin{equation}\label{eq:phipi}
\phi(\pi_{i(1)}(a_1)\cdots\pi_{i(n)}(a_n))1=(\id\otimes\phi)(\pit_{i(1)}(a_1)\cdots\pit_{i(n)}(a_n)).
\end{equation}
\end{defi}
\begin{remarks}
\begin{enumerate}[(a)]
\item In~\eqref{eq:pit}, the minimal tensor product $C^*$-algebra is indicated.
\item Looking at~\eqref{eq:exch}, we see that in Definition~\ref{def:exch}, it suffices to take $\sigma$ belonging
to the group $S_\infty$ of all permutations of $\Nats$ that fix all but finitely many elements.
\item
If~\eqref{eq:phipi} holds,
then composing with the standard ``abelianization'' representation $A_s(k)\to C(S_k)$,
we get~\eqref{eq:exch}.
Thus, quantum exchangeability implies exchangeability.
\end{enumerate}
\end{remarks}

Throughout this paper, we let
$\Afr=*_{i=1}^\infty A$
be the universal, unital free product of infinitely many copies of $A$ an let $\lambda_i:A\to\Afr$ ($i\in\Nats$)
denote the canonical injective $*$-homomorphisms arising from this construction.
(Thus, given any unital
$*$-homomorphisms $\pi_i$ from $A$ to a $C^*$-algebra $\Bfr$,
there is a unital $*$-homomorphism $\pi=\freeprod_{i=1}^\infty\pi_i:\Afr\to\Bfr$
such that $\pi\circ\lambda_i=\pi_i$.)
\begin{defi}\label{def:QSS}
A  state $\psi$ on $\Afr$ 
is called
{\em  symmetric} if $(\lambda_i)_{i=1}^\infty$ is exchangeable with respect to $\psi$ and
{\em quantum symmetric} if $(\lambda_i)_{i=1}^\infty$ is quantum exchangeable with respect to $\psi$.
Let $\SSt(A)$ be the set of all symmetric states on $\Afr$.
Let $\QSS(A)$ be the set of all quantum symmetric states on $\Afr$ and let $\TQSS(A)$ be the set of all quantum
symmetric states on $\Afr$ that are traces.
Furthermore, if $\phi$ is a state on $A$, let $\QSS(A,\phi)$ be the set of quantum symmetric states $\psi$ on $\Afr$ such that
$\psi\circ\lambda_i=\phi$ for some (and, thus, for all) $i$; if $\tau$ is a tracial state on $A$, let
$\TQSS(A,\tau)=\QSS(A,\tau)\cap\TQSS(A)$.
\end{defi}

{}From the compactness of the state space of $\Afr$ in the weak$^*$-topology, we immediately have the following:
\begin{prop}
For any unital $C^*$-algebra $A$ and any state $\phi$ on $A$ and any tracial state $\tau$ on $A$, the sets
$\SSt(A)$, $\QSS(A)$, $\TQSS(A)$, $\QSS(A,\phi)$ and $\TQSS(A,\tau)$ are compact, convex subsets of the Banach space dual of $\Afr$
in the weak$^*$-topology.
\end{prop}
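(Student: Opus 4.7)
The plan is to show that each of the five sets is both convex and weak$^*$-closed inside the full state space $S(\Afr)$ of $\Afr$, since $S(\Afr)$ is itself weak$^*$-compact and convex by the Banach--Alaoglu theorem together with the standard fact that states are characterized as positive unital linear functionals (conditions which are obviously preserved under weak$^*$-limits and convex combinations).

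Convexity of each set is immediate: every defining condition takes the form of a linear equation in $\psi$, namely either $\psi(x) = \psi(y)$ for specified $x,y\in\Afr$ (for $\SSt(A)$ and for the tracial condition $\TQSS$), or $\psi(\lambda_i(a)) = \phi(a)$ (for the fibers over $\phi$ and $\tau$), or, in the quantum symmetric case, the operator-valued equation~\eqref{eq:phipi} instantiated with $\pi_j = \lambda_j$. Any convex combination of states satisfying such a linear equation also satisfies it.

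For closedness, the crucial observation is that each defining equation equates values of weak$^*$-continuous functionals of $\psi$. For $\SSt(A)$, $\TQSS(A)$, $\QSS(A,\phi)$ and $\TQSS(A,\tau)$, this is transparent, since $\psi\mapsto\psi(x)$ is by definition weak$^*$-continuous for each fixed $x\in\Afr$, and the sets are cut out by intersecting countably many such equalities (indexed by the choices of $n$, $a_1,\ldots,a_n\in A$, indices, and permutations or pairs of words).

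The only step requiring a brief check is weak$^*$-closedness of $\QSS(A)$, where condition~\eqref{eq:phipi} involves the slice $\id\otimes\psi$ on $A_s(k)\otimes\Afr$. Here I would use that, for each fixed $n$ and fixed $i(1),\dots,i(n)\in\{1,\ldots,k\}$ and $a_1,\dots,a_n\in A$, the product $\lambdat_{i(1)}(a_1)\cdots\lambdat_{i(n)}(a_n)$ expands as a \emph{finite} sum $\sum_{\ell} b_\ell\otimes c_\ell$ with $b_\ell\in A_s(k)$ and $c_\ell\in\Afr$, so that $(\id\otimes\psi)$ applied to it equals $\sum_\ell \psi(c_\ell)\, b_\ell$, which is a norm-continuous function of the scalars $\psi(c_\ell)$. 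Hence both sides of~\eqref{eq:phipi} depend weak$^*$-continuously on $\psi$, and the equation defines a closed set. Intersecting over all choices of $k,n,i(\cdot),a_\cdot$ yields $\QSS(A)$. This is the one step with any content; everything else is a direct application of the definitions.
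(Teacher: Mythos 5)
Your proof is correct and is exactly the argument the paper has in mind: the paper simply remarks that the proposition follows immediately from weak$^*$-compactness of the state space, and your write-up supplies the routine details (linearity of the defining conditions for convexity, weak$^*$-continuity of each condition for closedness, with the only point of substance being the finite-sum expansion of $(\id\otimes\psi)$ in the quantum case, which you handle correctly). One negligible slip: the family of defining equalities is indexed over all tuples of elements of $A$ and so need not be countable, but this is harmless since an arbitrary intersection of weak$^*$-closed sets is weak$^*$-closed.
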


\section{Quantum symmetric states arising from freeness}
\label{sec:freenessYields}

The following proposition says that if a state $\psi$ on $\Afr$
is equidistributed on the copies of $A$ and if it arises from a situation of freeness with amalgamation,
then it is quantum symmetric.
It is analogous to Prop. 3.1 of~\cite{KSp09}, and the proof is quite the same.
So we will outsource part of the proof, below, to~\cite{KSp09}.

\begin{prop}\label{prop:freeqsymm}
Let $B$ be a unital $C^*$-algebra, $D\subseteq B$ a unital $C^*$-subalgebra with
$E:B\to D$ a conditional expectation (i.e., a projection of norm one) onto $D$.
Suppose $\pi_i:A\to B$ ($i\in\Nats$) are $*$-homomorphisms
such that $E\circ\pi_i$ is the same for all $i$
and the family $(\pi_i(A))_{i=1}^\infty$ is free with respect to $E$.
Let $\pi=\freeprod_{i=1}^\infty\pi_i:\Afr\to B$ be the resulting free product $*$-homomorphism.
Let $\rho$ be any state of $D$ and consider the state $\psi=\rho\circ E\circ\pi$ of $\Afr$.
Then $\psi$ is quantum symmetric.
\end{prop}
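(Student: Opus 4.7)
The plan is to verify equation~\eqref{eq:phipi} (with $\phi$ replaced by $\psi$ and $\pi_i$ replaced by $\lambda_i$) on every monomial $\lambda_{i(1)}(a_1)\cdots\lambda_{i(n)}(a_n)$; linearity and density then give the general case. Expanding $\lambdat_{i(r)}(a_r)=\sum_{j(r)=1}^{k}u_{i(r),j(r)}\otimes\lambda_{j(r)}(a_r)$ and applying $\id\otimes\psi$, the left-hand side of~\eqref{eq:phipi} becomes
\[
\sum_{j(1),\ldots,j(n)=1}^{k}u_{i(1),j(1)}u_{i(2),j(2)}\cdots u_{i(n),j(n)}\,\alpha_j,
\]
where, using $\pi\circ\lambda_i=\pi_i$,
\[
\alpha_j:=\psi\bigl(\lambda_{j(1)}(a_1)\cdots\lambda_{j(n)}(a_n)\bigr)=\rho\bigl(E(\pi_{j(1)}(a_1)\cdots\pi_{j(n)}(a_n))\bigr)\in\Cpx.
\]

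The next step would be to invoke the operator-valued moment--cumulant formula together with the vanishing-of-mixed-cumulants characterization of freeness of $(\pi_i(A))_i$ with respect to $E$, giving
\[
E\bigl(\pi_{j(1)}(a_1)\cdots\pi_{j(n)}(a_n)\bigr)=\sum_{\substack{\sigma\in\NC(n)\\ \sigma\le\ker j}}\kappa_\sigma^E\bigl[\pi_{j(1)}(a_1),\ldots,\pi_{j(n)}(a_n)\bigr].
\]
Because the $\pi_i$ have the same $E$-distribution (the hypothesis that $E\circ\pi_i$ is independent of $i$, together with the homomorphism property of the $\pi_i$), each surviving pure cumulant depends only on $\sigma$ and $(a_1,\ldots,a_n)$; let $c_\sigma\in\Cpx$ denote its common $\rho$-value over all $j$ with $\sigma\le\ker j$. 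Substituting and interchanging the order of summation recasts the full expression as
\[
\sum_{\sigma\in\NC(n)}c_\sigma\cdot S^{(i)}_\sigma,\qquad S^{(i)}_\sigma:=\sum_{\substack{j:\{1,\ldots,n\}\to\{1,\ldots,k\}\\ \sigma\le\ker j}}u_{i(1),j(1)}u_{i(2),j(2)}\cdots u_{i(n),j(n)}\in A_s(k).
\]

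The crux, and what I would import from~\cite{KSp09}, is the combinatorial identity in $A_s(k)$
\[
S^{(i)}_\sigma=\begin{cases}1_{A_s(k)},&\sigma\le\ker i,\\ 0,&\text{otherwise,}\end{cases}\qquad(\sigma\in\NC(n)),
\]
proved there by induction on the block structure of $\sigma$ using only the quantum-permutation-group relations $u_{ij}u_{i\ell}=\delta_{j\ell}u_{ij}$, $u_{ij}u_{\ell j}=\delta_{i\ell}u_{ij}$ and $\sum_{j}u_{ij}=\sum_{i}u_{ij}=1$. This is the only genuinely combinatorial step, and is where I expect the real work to reside; but because it is identical to the corresponding step of \cite{KSp09}, it can be quoted verbatim. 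Granting it, the sum collapses to $\bigl(\sum_{\sigma\le\ker i}c_\sigma\bigr)\cdot 1_{A_s(k)}$, and running the moment--cumulant expansion in reverse at $j=i$ identifies the scalar factor with $\rho\bigl(E(\pi_{i(1)}(a_1)\cdots\pi_{i(n)}(a_n))\bigr)=\psi\bigl(\lambda_{i(1)}(a_1)\cdots\lambda_{i(n)}(a_n)\bigr)$. This is precisely~\eqref{eq:phipi} on monomials and, by linearity and continuity, on all of $\Afr$, so $\psi$ is quantum symmetric.
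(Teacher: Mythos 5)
Your proposal is correct and follows essentially the same route as the paper: expand $\pit_{i(r)}(a_r)$ in the $u_{ij}$'s, apply the operator-valued moment--cumulant formula, use freeness to restrict to $\sigma\le\ker j$ and equidistribution of the $E\circ\pi_i$ to factor the cumulants out of the $j$-sum, and then quote the combinatorial identity for $\sum_{\sigma\le\ker j}u_{i(1),j(1)}\cdots u_{i(n),j(n)}$ from K\"ostler--Speicher. The only cosmetic difference is that you apply $\rho$ to the cumulants immediately (working with scalars $c_\sigma$) whereas the paper keeps the $D$-valued cumulants and applies $\rho$ at the end; this changes nothing.
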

\begin{proof}
Considering the state $\rhoh=\rho\circ E$ of $B$, and using $\pi\circ\lambda_i=\pi_i$, we must
show that $(\pi_i)_{i=1}^\infty$ is quantum exchangeable with respect to $\rhoh$.
Fixing $k,n\in\Nats$, $i(1),\ldots,i(n)\in\{1,\ldots,k\}$ and $a_1,\ldots,a_n\in A$ and working in
$A_s(k)\otimes B$, we have
\begin{align}
(\id\otimes\rhoh)&(\pit_{i(1)}(a_1)\cdots\pit_{i(n)}(a_n))= \label{eq:idrhoh} \\
&=\sum_{j(1),\ldots,j(n)=1}^ku_{i(1),j(1)}\cdots u_{i(n),j(n)}\cdot\rhoh(\pi_{i(1)}(a_1)\cdots\pi_{i(n)}(a_n))
 \notag \\ \displaybreak[2]
&=\sum_{j(1),\ldots,j(n)=1}^ku_{i(1),j(1)}\cdots u_{i(n),j(n)}\cdot\rho(E(\pi_{i(1)}(a_1)\cdots\pi_{i(n)}(a_n)))
 \notag \\ \displaybreak[2]
&=\sum_{j(1),\ldots,j(n)=1}^ku_{i(1),j(1)}\cdots u_{i(n),j(n)}\cdot
  \rho\bigg(\sum_{\sigma\in\NC(n)}\kappa_\sigma^E[\pi_{i(1)}(a_1),\ldots,\pi_{i(n)}(a_n)]\bigg) \notag \\ \displaybreak[2]
&=\sum_{\sigma\in\NC(n)}\sum_{j(1),\ldots,j(n)=1}^ku_{i(1),j(1)}\cdots u_{i(n),j(n)}\cdot
  \rho\big(\kappa_\sigma^E(\pi_{i(1)}(a_1),\ldots,\pi_{i(n)}(a_n))\big), \notag
\end{align}
where $\NC(n)$ denotes the lattice of noncrossing partitions of $\{1,\ldots,n\}$ and
$\kappa_\sigma^E$ denotes Speicher's free $D$-valued cummulant~\cite{Sp98} associated to $\sigma$
and the conditional expectation $E$ (see~\cite{KSp09} for more information about these).
Now fixing $\sigma\in\NC(n)$
and arguing as in the proof of Prop.\ 3.1 of~\cite{KSp09}, by the freeness assumption we have that
$\kappa_\sigma^E(\pi_{i(1)}(a_1),\ldots,\pi_{i(n)}(a_n))$ vanishes unless $\ker j\ge\sigma$,
where $j=(j(1),\ldots,j(n))$;
furthermore, by the assumption that $E\circ\pi_i$ is the same for all $i$,
it follows that the value of the cummulant $\kappa_\sigma^E(\pi_{i(1)}(a_1),\ldots,\pi_{i(n)}(a_n))$
is the same for all $j$ with $\ker j\ge\sigma$ and we denote this quantity simply by $\kappa_\sigma^E$.
At this point, the proof proceeds almost precisely as in the proof of Prop.\ 3.1 of~\cite{KSp09}
(starting at the last displayed equation on p.\ 480 of~\cite{KSp09}, but with
$\sigma$ replacing $\pi$ and $\rho$ or $\rhoh$ replacing $\phi$).
Thus, we obtain that the quantity~\eqref{eq:idrhoh} is equal to
\begin{align*}
\rho\big(\sum_{\substack{\sigma\in\NC(n) \\\ker i\ge\sigma}}\kappa_\sigma^E\big)
&=\rho\big(\sum_{\substack{\sigma\in\NC(n) \\\ker i\ge\sigma}}
 \kappa_\sigma^E[\pi_{i(1)}(a_1),\ldots,\pi_{i(n)}(a_n)]\big)= \\
&=\rho\big(E(\pi_{i(1)}(a_1),\ldots,\pi_{i(n)}(a_n))\big)
=\rhoh\big(\pi_{i(1)}(a_1)\cdots\pi_{i(n)}(a_n)\big),
\end{align*}
as required.
\end{proof}

\section{Two results on amalgmated free products of $C^*$-algebras}
\label{sec:Two}

In this section, we collect two technical results on an amalgamated free product of infinitely many copies of a $C^*$-algebra.
(This construction was introduced by Voiculescu in~\cite{V85};  see also the book~\cite{VDN92}.)
We let $B$ be a unital $C^*$-algebra, $D$ be a unital $C^*$-subalgebra of $B$,
$E:B\to D$ be a conditional expectation whose GNS representation is faithful (on $B$).
We let
\[
(A,F)=(*_D)_{i=1}^\infty(B,E)
\]
be the $C^*$-algebra free product with amalgamation of infinitely many copies of $(B,E)$,
and we denote by $B_i$ the $i$-th copy of $B$ in $A$ arising
from the free product construction.
We let $\rho$ be a state on $D$ and let $\pi=\pi_{\rho\circ F}$ denote the GNS representation of $A$ corresponding to the state $\rho\circ F$.

\begin{prop}\label{prop:rhoFt}
The formula $G(\pi(a))=\pi(F(a))$ defines a conditional expectation $G$ from $\pi(A)$ onto $\pi(D)$.
\end{prop}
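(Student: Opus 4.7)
The plan is to build $G$ via a compression argument, combined with a mean-ergodic use of the natural $S_\infty$-action on the amalgamated free product. Let $\xi$ be the cyclic vector of $\pi$, set $K = \overline{\pi(D)\xi}$, and let $p$ denote the orthogonal projection of $H_{\rho \circ F}$ onto $K$. Because $K$ and $K^\perp$ are both $\pi(D)$-invariant, $p \in \pi(D)'$. The first key observation is the compression identity $p\,\pi(a)\,p = \pi(F(a))\,p = p\,\pi(F(a))$ for all $a \in A$, which I would verify as follows: the orthogonality $\langle \widehat{a - F(a)}, \widehat{d}\rangle_{\rho \circ F} = \rho(d^* F(a - F(a))) = 0$ for $d \in D$ shows $p\,\widehat{a} = \widehat{F(a)}$, and applying this to $\pi(a)\pi(d')\xi = \widehat{ad'}$ gives $p\,\pi(a)\,\pi(d')\xi = \pi(F(a))\,\pi(d')\xi$; density of $\pi(D)\xi$ in $K$ together with $p|_{K^\perp} = 0$ then yields the identity.

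The crucial second step is well-definedness: if $\pi(a) = 0$, then $\pi(F(a)) = 0$. The compression identity alone yields only $\pi(F(a))\,p = 0$, and the map $\pi(D) \to \pi(D)\,p$ can fail to be injective when $\rho$ is not faithful, so an extra input is needed. To close the gap I would exploit the $S_\infty$-action $\alpha$ on $A$ permuting the free factors $B_i$: since $E \circ \alpha_\sigma = E$, one has $F \circ \alpha_\sigma = F$, hence $\rho \circ F$ is $S_\infty$-invariant; this produces unitaries $U_\sigma$ on $H_{\rho \circ F}$ with $U_\sigma \xi = \xi$ and $U_\sigma \pi(a) U_\sigma^* = \pi(\alpha_\sigma(a))$, which makes the norm-closed ideal $\ker \pi$ stable under $\alpha$. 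Decomposing $a = F(a) + a_0$ with $a_0 \in \ker F$, widely separated shifted copies of $a_0$ are free over $D$ with the same $F$-distribution as $a_0$, so a free law of large numbers produces convex combinations $c_n = \frac{1}{n}\sum_k \alpha_{\sigma_k}(a_0)$ with $\|c_n\|_A \to 0$. Each $c_n + F(a) = \frac{1}{n}\sum_k \alpha_{\sigma_k}(a)$ lies in $\ker \pi$, so the norm-closedness of $\ker \pi$ forces $F(a) \in \ker \pi$, i.e.\ $\pi(F(a)) = 0$. This argument is first carried out on the dense $*$-subalgebra of elements supported on finitely many $B_i$, then extended by density.

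The remaining conditional-expectation axioms follow routinely: $G$ is linear, restricts to the identity on $\pi(D)$ since $F|_D = \id_D$, is idempotent since $F \circ F = F$, and has range $\pi(D)$. Contractivity, $\|G(\pi(a))\| \le \|\pi(a)\|$, is again a consequence of the averaging, since $\pi(F(a))$ is a norm limit of averages of the unitarily conjugate operators $U_{\sigma_k}\pi(a)U_{\sigma_k}^*$, each of norm $\|\pi(a)\|$. Tomiyama's theorem then identifies $G$ as a conditional expectation onto $\pi(D)$. The main obstacle is plainly the well-definedness step: it is there that one must use the amalgamated-freeness structure of $(A,F)$ rather than the bare fact that $F$ is a conditional expectation, and where the $S_\infty$-symmetry together with a free central limit/law of large numbers enters essentially.
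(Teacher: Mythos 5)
Your proof is correct and rests on the same two pillars as the paper's own argument: the invariance of $\rho\circ F$ under the permutation action on the free factors (so that $\ker\pi$ is permutation-invariant and $\pi(\beta_\sigma(a))$ is unitarily conjugate to $\pi(a)$), and the norm convergence to zero of averages of free, centered, identically $F$-distributed copies of $a-F(a)$, which is exactly the unique-ergodicity/free-law-of-large-numbers input the paper takes from Theorem~6.1 of \cite{AD09}. The only cosmetic differences are that you average over permuted copies and conjugate the operator by the unitaries $U_{\sigma_k}$ to get $\|\pi(F(a))\|\le\|\pi(a)\|$ directly, whereas the paper averages the shift $\alpha^k(a)$ and moves the permutation onto the vectors in a matrix-coefficient estimate, and that your opening compression identity is purely motivational and not needed for the argument.
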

\begin{proof}
Consider the dense $*$-subalgebra $A_0=\stalg(\bigcup_{i=1}^\infty B_i)$ of $A$.
We need only show $\|\pi(F(a))\|\le\|\pi(a)\|$ for all $a\in A_0$.
For this, it will suffice to show
\[
\big|\langle\pi(F(a))\vh,\wh\rangle\big|\le\|\pi(a)\|\,\|\vh\|_2\,\|\wh\|_2
\]
for all $v,w\in A_0$, where as usual, $v\mapsto\vh$ denotes the defining linear mapping $A\mapsto L^2(A,\rho\circ F)$
and $\|\cdot\|_2$ denotes the norm in $L^2(A,\rho\circ F)$.
Fixing such $v$ and $w$, let $N$ be so large that $v,w\in\stalg(\bigcup_{i=1}^N B_i)$.

For $\sigma\in S_\infty$, we have the $*$-automorphism $\beta_\sigma$ of $A$ that sends $B_i$ to the $B_{\sigma(i)}$ for all $i$, and
we have the $*$-endomorphism $\alpha$ of $A$ that sends $B_i$ to $B_{i+1}$ for all $i$.
Both $\beta_\sigma$ and $\alpha$ leave $F$ invariant.
By Theorem~6.1 of~\cite{AD09},
\[
F(a)=\lim_{n\to\infty}\frac1n\sum_{k=N+1}^{N+n}\alpha^k(a),
\]
where the limit is in norm.
So it will suffice to show
\[
\big|\langle\pi(\alpha^k(a))\vh,\wh\rangle\big|\le\|\pi(a)\|\,\|\vh\|_2\,\|\wh\|_2\qquad(k>N),
\]
where the inner product and last two norms are taken in $L^2(A,\rho\circ F)$.
But fixing $k>N$ and letting $\sigma\in S_\infty$ be such that $\beta_\sigma\circ\alpha^k(a)=a$, we have
\begin{multline*}
\big|\langle\pi(\alpha^k(a))\vh,\wh\rangle\big|
=|\rho\circ F(w^*\alpha^k(a)v)|
=|\rho\circ F(\beta_\sigma(w)^*a\beta_\sigma(v))| \\
=\big|\langle\pi(a))(\beta_\sigma(v))\hat{\;},(\beta_\sigma(w))\hat{\;}\rangle\big|
\le\|\pi(a)\|\,\|(\beta_\sigma(v))\hat{\;}\|_2\,\|(\beta_\sigma(w))\hat{\;}\|_2
=\|\pi(a)\|\,\|\vh\|_2\,\|\wh\|_2.
\end{multline*}
\end{proof}

\begin{prop}\label{prop:TinW*D}
Let
\[
\Tc=\bigcap_{N\ge1}W^*(\bigcup_{j\ge N}\pi(B_i)).
\]
Then $\Tc=W^*(\pi(D))$.
\end{prop}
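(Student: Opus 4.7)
The easy inclusion $W^*(\pi(D))\subseteq\Tc$ is immediate, since $D\subseteq B_i$ for every $i$ gives $\pi(D)\subseteq W^*(\bigcup_{j\ge N}\pi(B_j))$ for all $N$. For the reverse inclusion, the plan is to combine a freeness identity with a Kaplansky density and Alaoglu argument built on the conditional expectation $G$ from Proposition~\ref{prop:rhoFt}.

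Set $A_L^N=\stalg(\bigcup_{j=1}^N B_j)$ and $A_R^N=\stalg(\bigcup_{j>N}B_j)$. The key algebraic step is the identity
\[
F(w^*av)=F\big(w^*F(a)v\big)\qquad(v,w\in A_L^N,\ a\in A_R^N).
\]
To prove it, write $a=F(a)+a^\circ$ with $a^\circ\in A_R^N\cap\ker F$; it suffices to show $F(w^*a^\circ v)=0$. By associativity of the (reduced) amalgamated free product, $A_L^N$ and $A_R^N$ are free over $D$ with respect to $F$. Decompose $w^*=F(w^*)+w^{*\circ}$ and $v=F(v)+v^\circ$ with $w^{*\circ},v^\circ\in A_L^N\cap\ker F$, and expand $w^*a^\circ v$ into four terms. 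One of them, $F(w^*)a^\circ F(v)$, lies in $A_R^N\cap\ker F$ by $D$-bimodularity of $F$; the other three are alternating products of $F$-centered elements drawn from $A_L^N$ and $A_R^N$. All four are annihilated by $F$ by freeness with amalgamation.

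Now let $x\in\Tc$, fix $v,w\in A_0=\stalg(\bigcup_{i\ge1}B_i)$, and choose $N$ with $v,w\in A_L^N$. Since $x\in W^*(\bigcup_{j>N}\pi(B_j))$, Kaplansky density produces a bounded net $(a_\lambda)\subseteq A_R^N$ with $\pi(a_\lambda)\to x$ in the strong operator topology and $\|\pi(a_\lambda)\|\le\|x\|$. Combining $G\circ\pi=\pi\circ F$ with the freeness identity, for each $\lambda$,
\[
\langle\pi(a_\lambda)\hat v,\hat w\rangle=\langle\pi(F(a_\lambda))\hat v,\hat w\rangle,
\]
and the left side converges to $\langle x\hat v,\hat w\rangle$. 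The net $(\pi(F(a_\lambda)))_\lambda$ lies in $W^*(\pi(D))$ and is uniformly bounded by $\|x\|$, so by Banach--Alaoglu it has a WOT-cluster point $y_N\in W^*(\pi(D))$, which satisfies $\langle y_N\hat v,\hat w\rangle=\langle x\hat v,\hat w\rangle$.

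To finish, note that the sequence $(y_N)_{N\ge1}$ is uniformly bounded in $W^*(\pi(D))$, so by Banach--Alaoglu it has a WOT-cluster point $y\in W^*(\pi(D))$. For each fixed $v,w\in A_0$, the equality $\langle y_N\hat v,\hat w\rangle=\langle x\hat v,\hat w\rangle$ holds for every $N$ with $v,w\in A_L^N$, and standard cluster-point reasoning promotes this to $\langle y\hat v,\hat w\rangle=\langle x\hat v,\hat w\rangle$. Density of $\hat{A_0}$ in $L^2(A,\rho\circ F)$ then forces $y=x$ as operators, so $x\in W^*(\pi(D))$. The main obstacle is the freeness bookkeeping in the second paragraph; working entirely through matrix elements on $\hat{A_0}$ also sidesteps the difficulty of extending $G$ to a normal conditional expectation on all of $W^*(\pi(A))$, which is problematic when $\rho$ is non-faithful.
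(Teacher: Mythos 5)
Your proof is correct and follows essentially the same route as the paper's: Kaplansky density to approximate $z\in\Tc$ by bounded elements of $\stalg(\bigcup_{j>N}B_j)$, the freeness identity $F(w^*av)=F(w^*F(a)v)$ to replace these approximants by $\pi(F(a))\in\pi(D)$ without changing matrix coefficients against $\hat{A_L^N}$, and the norm bound from Proposition~\ref{prop:rhoFt} to keep everything bounded by $\|z\|$ so that a weak-operator limit in $W^*(\pi(D))$ can be extracted. The only differences are organizational (you take Banach--Alaoglu cluster points in two stages where the paper builds one net indexed by finite vector sets and $\eps$) plus your welcome spelling-out of the freeness computation that the paper leaves as ``by freeness.''
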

\begin{proof}
The inclusion $\supseteq$ is clear.
For the reverse includion, let $z\in\Tc$.
We use the same notation as in the last proof.
Suppose $v_1,\ldots,v_p\in A_0$ satisfy $\|\vh_j\|_2\le1$ for all $j$.
Let $N\ge2$ be such that $v_1,\ldots,v_p\in\stalg(\bigcup_{i=1}^{N-1}B_i)$ and let $\eps>0$.
Using $z\in W^*(\bigcup_{j\ge N}(\pi(B_j)))$,
by Kaplansky's density theorem, there is $y\in\stalg(\bigcup_{i\ge N}B_i)$ such that $\|\pi(y)\|\le\|z\|$ and
$\|(z-\pi(y))\vh_j\|_2<\eps$ for all $j$.
By freeness, we have
$F(v_j^*yv_i)=F(v_j^*F(y)v_i)$
and this implies
\[
\langle\pi(y)\vh_i,\vh_j\rangle=\langle\pi(F(y))\vh_i,\vh_j\rangle,\qquad(1\le i,j\le p).
\]
Thus, letting $x=\pi(F(y))$ we have
\[
\big|\langle(z-x)\vh_i,\vh_j\rangle|<\eps,\qquad(1\le i,j\le p).
\]
By Proposition~\ref{prop:rhoFt}, we also have
\[
\|x\|=\|\pi(F(y))\|\le\|\pi(y)\|\le\|z\|.
\]
Finding in this way such an $x$ for every finite subset $\vh_1,\ldots\vh_p$ as above and every $\eps>0$, we get a net in $\pi(D)$
that converges in weak-operator topology to $z$.
(The inequalities $\|x\|\le\|z\|$ allow us to take vectors from only the dense subspace of $L^2(A,\rho\circ F)$ spanned by the $\vh$.)
Thus, $z\in W^*(\pi(D))$.
\end{proof}

\begin{remark}
It is natural to ask:  how much more general is the property demonstrated in Proposition~\ref{prop:rhoFt}?
For example, let $D$ be a unital $C^*$-subalgebra of a $C^*$-algebra $A$
and suppose $F:A\to D$ is a conditional expectation whose GNS representation is faithful;
let $\rho$ be a state of $D$ and let $\pi=\pi_{\rho\circ F}$ be the GNS representation of the state $\rho\circ F$ of $A$;
let $x\in A$.
Must we have $\|\pi(F(x))\|\le\|\pi(x)\|$?
The following easy example shows that the answer is ``no.''
Take
\begin{align*}
D=\Cpx\oplus\Cpx\oplus\Cpx&\;\hookrightarrow\;M_2(\Cpx)\oplus M_2(\Cpx)=A \\
 a\oplus b\oplus c&\;\mapsto\;
\left(\begin{matrix}a&0\\0&b\end{matrix}\right)\oplus\left(\begin{matrix}b&0\\0&c\end{matrix}\right)
\end{align*}
and let $F:A\to D$ be the conditional expectation
\[
\left(\begin{matrix}a_{11}&a_{12}\\a_{21}&a_{22}\end{matrix}\right)
 \oplus\left(\begin{matrix}b_{11}&b_{12}\\b_{21}&b_{22}\end{matrix}\right)
\mapsto a_{11}\oplus\left(\frac{a_{22}+b_{11}}2\right)\oplus b_{22}.
\]
Let $\rho$ be the state on $D$ given by $\rho(a\oplus b\oplus c)=a$.
Then
\[
\ker(\pi_{\rho\circ F})=0\oplus M_2(\Cpx),
\]
and identifying $D$ with its embedded image in $A$, we have
\[
\ker\pi_{\rho\circ F}\ni
\left(\begin{matrix}0&0\\0&0\end{matrix}\right)\oplus\left(\begin{matrix}2&0\\0&0\end{matrix}\right)
\;\overset{F}{\mapsto}\;
\left(\begin{matrix}0&0\\0&1\end{matrix}\right)\oplus\left(\begin{matrix}1&0\\0&0\end{matrix}\right)
\notin\ker\pi_{\rho\circ F}.
\]
\end{remark}

\section{Tail algebras of symmetric states}
\label{sec:tails}

This section has four parts and is about general symmetric states.

\subsection{The tail algebra and a conditional expectation}
\label{subsec:tail}

Let $\psi$ be a symmetric state on $\Afr$.
Let $\pi_\psi:\Afr\to B(L^2(\Afr,\psi))$ be the GNS representation associated to $\psi$.
Thus, $\HEu_\psi=L^2(\Afr,\psi)$ is the Hilbert space and, as is usual,
$a\mapsto\ah$ denotes the linear mapping onto a dense subspace of $L^2(\Afr,\psi)$,
with $\langle \ah,\bh\rangle=\psi(b^*a)$ for all $a,b\in\Afr$, norm $\|\ah\|_2=\psi(a^*a)^{1/2}$
and $\pi_\psi(a)\bh=(ab)\hat{\:}$.
Let $\Mcal_\psi=\pi_\psi(\Afr)''$ be the von Neumann algebra generated by the image of $\pi_\psi$.
Consider the vector state $\psih=\langle\cdot\oneh,\oneh\rangle$ on $\Mcal_\psi$;
we have $\psih\circ\pi_\psi=\psi$.

\begin{defisub}\label{def:tail}
The {\em tail algebra} of $\psi$ is the von Neumann subalgebra
\[
\Tc_\psi=\bigcap_{N=1}^\infty W^*\big(\bigcup_{i=N}^\infty\pi_\psi\circ\lambda_i(A)\big)
\]
of $\Mcal_\psi$.
\end{defisub}

\begin{defisub}\label{defi:fixedpt}
Let $\psi$ be a symmetric state on $\Afr$.
With $S_\infty=\bigcup_{n=1}^\infty S_n$ denoting the group of finite permutations of $\Nats$, we have the $\psi$-preserving
action $S_\infty\ni\sigma\mapsto\beta_\sigma\in\Aut(\Afr)$
defined by $\beta_\sigma\circ\lambda_i=\lambda_{\sigma(i)}$.
This yields a unitary $U_\sigma$ on $\HEu_\psi$ given by $U_\sigma\xh=(\beta_\sigma(x))\hat{\;}$.
Conjugation with $U_\sigma$ yields an automorphism $\betah_\sigma$ of $\Mcal_\psi$ such that
$\sigma\mapsto\betah_\sigma$ is an action of $S_\infty$ on $\Mcal_\psi$
and
\begin{equation}\label{eq:betahprops}
\betah_\sigma\circ\pi_\psi=\pi_\psi\circ\beta_\sigma,\qquad
\psih\circ\betah_\sigma=\psih.
\end{equation}
The {\em fixed point algebra} of $\psi$ is the fixed point subalgebra $\Fc_\psi=\Mcal_\psi^\betah$ of this action.
It is a von Neumann subalgebra of $\Mcal_\psi$.
\end{defisub}

\begin{lemmasub}\label{lem:TinD}
Let $\psi$ be a symmetric state on $\Afr$.
Then $\Tc_\psi\subseteq\Fc_\psi$.
\end{lemmasub}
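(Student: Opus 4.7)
The plan is to exploit the crucial fact that $S_\infty$ consists only of \emph{finite} permutations: any $\sigma \in S_\infty$ fixes all but finitely many elements of $\Nats$, so there exists $N = N(\sigma)$ with $\sigma(i) = i$ for every $i \ge N$. This is what makes the inclusion work.

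Fix $z \in \Tc_\psi$ and an arbitrary $\sigma \in S_\infty$; the goal is to show $\betah_\sigma(z) = z$. Choose $N$ as above. First I would observe that, for every $i \ge N$ and every $a \in A$, we have $\beta_\sigma(\lambda_i(a)) = \lambda_{\sigma(i)}(a) = \lambda_i(a)$, and hence by the intertwining relation~\eqref{eq:betahprops},
\begin{equation*}
\betah_\sigma(\pi_\psi \circ \lambda_i(a)) = \pi_\psi \circ \beta_\sigma \circ \lambda_i(a) = \pi_\psi \circ \lambda_i(a).
\end{equation*}
Thus $\betah_\sigma$ fixes $\pi_\psi \circ \lambda_i(A)$ pointwise for every $i \ge N$, and therefore fixes the $*$-algebra generated by $\bigcup_{i \ge N} \pi_\psi \circ \lambda_i(A)$ pointwise.

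Next I would invoke normality of $\betah_\sigma$ (which holds because it is implemented by conjugation with the unitary $U_\sigma$ on $\HEu_\psi$). A normal $*$-automorphism that is the identity on a $*$-subalgebra is automatically the identity on its weak-operator closure, so $\betah_\sigma$ fixes every element of $\Nc_N := W^*\big(\bigcup_{i \ge N} \pi_\psi \circ \lambda_i(A)\big)$. Since $z$ lies in the intersection defining $\Tc_\psi$, in particular $z \in \Nc_N$, and consequently $\betah_\sigma(z) = z$. As $\sigma \in S_\infty$ was arbitrary, $z \in \Fc_\psi$, completing the proof.

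There is no real obstacle here: the argument is essentially bookkeeping, and the only conceptual ingredient is that $S_\infty$ is the group of \emph{finite} permutations, which guarantees that each $\betah_\sigma$ acts trivially on a cofinite tail of the generators. The proof would likely be just a few lines.
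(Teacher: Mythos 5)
Your proposal is correct and follows exactly the paper's argument: for $\sigma\in S_n$ the automorphism $\betah_\sigma$ fixes $W^*\big(\bigcup_{j>n}\pi_\psi\circ\lambda_j(A)\big)$ pointwise, and every element of $\Tc_\psi$ lies in that algebra. You have merely spelled out the normality/weak-closure step that the paper leaves implicit.
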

\begin{proof}
Let $\sigma\in S_n\subset S_\infty$.
Then $\betah_\sigma(x)=x$ for all $x\in W^*(\bigcup_{j>n}\pi_j(A))$.
This implies $\betah_\sigma(x)=x$ for all $x\in\Tc_\psi$.
\end{proof}

There are many cases where $\Tc_\psi=\Fc_\psi$ (see Example~\ref{ex:Liu} and Proposition~\ref{prop:N=M}, below).
However, the example described in the next proposition shows that equality need not always hold,
even when $\psi$ is quantum symmetric.

\begin{examplesub}\label{ex:TneD}
Let $A=M_2(\Cpx)$ and let $\phi$ be the non-faithful state on $A$ given by
\[
\phi(\left(\begin{smallmatrix}a_{11}&a_{12}\\a_{21}&a_{22}\end{smallmatrix}\right))=a_{11}.
\]
Let $\psi=*_1^\infty\phi:\Afr\to\Cpx$ be the free product state of $\phi$ with itself infinitely many times.
(Thus, by Proposition~\ref{prop:freeqsymm}, $\psi$ is quantum symmetric.)
We will show that the tail algebra $\Tc_\psi$ is trivial, i.e., $\Tc_\psi=\Cpx1$, while $\Fc_\psi\ne\Cpx1$.

Letting $(e_{ij})_{1\le i,j\le 2}$ be the usual system of matrix units,
$L^2(A,\phi)$ has orthonormal basis $\{\eh_{11},\eh_{21}\}$ with $\oneh=\eh_{11}$.
Realizing $\HEu_\psi=L^2(\Afr,\psi)$ as the free product of Hilbert spaces, we naturally see that $L^2(\Afr,\psi)$ has orthonormal basis
\[
\{\xi\}\cup\{\eh_{21}^{(p_1)}\otimes\cdots\otimes\eh_{21}^{(p_n)}\mid n,p_1,\ldots,p_n\in\Nats,\,p_j\ne p_{j+1}\},
\]
where $\eh_{21}^{(p)}$ represents the element $\eh_{21}$ in the $p$-th copy of $L^2(A,\phi)$.
Letting $e_{ij}^{(p)}=\pi_\psi\circ\lambda_p(e_{ij})$, by the free product construction we have that
$e_{22}^{(p)}$ is the projection onto 
\[
\lspan\{\eh_{21}^{(p_1)}\otimes\cdots\otimes\eh_{21}^{(p_n)}\mid p_1=p,\,n,p_2,\ldots,p_n\in\Nats,\,p_j\ne p_{j+1}\}.
\]

We see that the sum $\sum_{p=1}^\infty e_{22}^{(p)}$ converges in strong-operator-topology to $1-P_\xi$, where $P_\xi$ is the rank one
projection onto $\Cpx\xi$.
So $P_\xi\in\Mcal_\psi$.
Since $\xi$ is cyclic for the representation $\pi_\psi$, we find
\begin{equation}\label{eq:MpsiBL2}
\Mcal_\psi=B(\HEu_\psi).
\end{equation}

Considering now the action of $\sigma\in S_\infty$, the unitary $U_\sigma$ leaves $\xi$ fixed and satisfies
\[
U_\sigma:\eh_{21}^{(p_1)}\otimes\cdots\otimes\eh_{21}^{(p_n)}\mapsto\eh_{21}^{(\sigma(p_1))}\otimes\cdots\otimes\eh_{21}^{(\sigma(p_n))}.
\]
Thus,
\[
P_\xi\in\Fc_\psi.
\]
In particular, $\Fc_\psi$ is nontrivial.

An application of Proposition~\ref{prop:TinW*D} shows$\Tc_\psi=\Cpx1$.
Indeed, the GNS representation $\pi_\psi$ maps $\Afr$ onto the free product $C^*$-algebra
$*_1^\infty(A,\phi)$, with amalgamation over the scalars.
So Proposition~\ref{prop:TinW*D} implies $\Tc_\psi\subseteq W^*(\pi_\psi(\Cpx))=\Cpx1$.
\end{examplesub}

We continue, letting $\psi$ be an arbitrary symmetric state on $\Afr$, with associated constructions as described above.
The next lemma shows that exchangeability passes from $\psi$ on $\Afr$ to $\psih$, with $\Fc_\psi$ included too.
\begin{lemmasub}\label{lem:R0exch}
If $n,i_1,\ldots,i_n\in\Nats$, $a_1,\ldots,a_n\in A$, $d_0,d_1,\ldots,d_n\in\Fc_\psi$ and $\sigma\in S_\infty$, then
\[
\psih(d_0\pi_{i_1}(a_1)d_1\cdots\pi_{i_n}(a_n)d_n)=
\psih(d_0\pi_{\sigma(i_1)}(a_1)d_1\cdots\pi_{\sigma(i_n)}(a_n)d_n).
\]
\end{lemmasub}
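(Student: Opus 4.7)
The proof is a direct unwinding of the definitions in Definition~\ref{defi:fixedpt}. The plan is to apply the automorphism $\betah_\sigma$ inside $\psih$ and use its compatibility with $\pi_\psi$ together with the fixed-point property of the $d_j$.

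More precisely, fix $\sigma \in S_\infty$. Since $\betah_\sigma$ is a $*$-automorphism of $\Mcal_\psi$, it preserves products, so
\[
\betah_\sigma\bigl(d_0\pi_{i_1}(a_1)d_1\cdots\pi_{i_n}(a_n)d_n\bigr)
= \betah_\sigma(d_0)\betah_\sigma(\pi_{i_1}(a_1))\betah_\sigma(d_1)\cdots\betah_\sigma(\pi_{i_n}(a_n))\betah_\sigma(d_n).
\]
Each $d_j$ lies in $\Fc_\psi$, hence is fixed by $\betah_\sigma$. For the factors $\pi_{i_j}(a_j) = \pi_\psi(\lambda_{i_j}(a_j))$, we use the first relation in~\eqref{eq:betahprops}, namely $\betah_\sigma \circ \pi_\psi = \pi_\psi \circ \beta_\sigma$, together with $\beta_\sigma \circ \lambda_{i_j} = \lambda_{\sigma(i_j)}$, to get $\betah_\sigma(\pi_{i_j}(a_j)) = \pi_{\sigma(i_j)}(a_j)$. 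Hence
\[
\betah_\sigma\bigl(d_0\pi_{i_1}(a_1)d_1\cdots\pi_{i_n}(a_n)d_n\bigr)
= d_0\pi_{\sigma(i_1)}(a_1)d_1\cdots\pi_{\sigma(i_n)}(a_n)d_n.
\]

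Finally, applying $\psih$ to both sides and using the second relation in~\eqref{eq:betahprops}, $\psih \circ \betah_\sigma = \psih$, yields the claimed equality. There is no real obstacle here; the lemma is essentially a packaging of the properties of the action $\betah$ assembled in Definition~\ref{defi:fixedpt}, recorded separately because it will be invoked later in combination with the not-yet-established fact $\Tc_\psi \subseteq \Fc_\psi$ (which is Lemma~\ref{lem:TinD}) to deduce the corresponding exchangeability in the presence of tail-algebra insertions.
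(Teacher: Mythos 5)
Your proof is correct and follows exactly the route the paper intends: the paper's own proof is the one-line remark that the lemma ``follows immediately from the properties~\eqref{eq:betahprops} of $\betah$,'' and your argument is simply the explicit unwinding of that remark, applying the automorphism $\betah_\sigma$, using that it fixes each $d_j\in\Fc_\psi$ and sends $\pi_{i_j}(a_j)$ to $\pi_{\sigma(i_j)}(a_j)$, and concluding via $\psih\circ\betah_\sigma=\psih$. Nothing is missing.
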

\begin{proof}
This follows immediately from the properties~\eqref{eq:betahprops} of $\betah$.
\end{proof}

Let
\[
\REu_0=\stalg(\Fc_\psi\cup\pi_\psi(\Afr_0)).
\]
We let $\alpha\in\End(\Afr)$ be the shift $*$-endomorphism, defined by $\alpha\circ\pi_i=\pi_{i+1}$.
\begin{lemmasub}\label{lem:alpha0}
There is a $*$-endomorphism $\alpha_0:\REu_0\to\REu_0$ satisfying
\begin{enumerate}[(i)]
\item\label{lem:alpha0-1} $\alpha_0\circ\pi_i=\pi_{i+1}$ ($i\in\Nats$)
\item\label{lem:alpha0-2} $\forall x\in\REu_0$, $\|\alpha_0(x)\|=\|x\|$
\item\label{lem:alpha0-3} $\alpha_0\restrict_{\Fc_\psi}=\id_{\Fc_\psi}$
\item\label{lem:alpha0-4} $\psih\circ\alpha_0=\psih\restrict_{\REu_0}$.
\end{enumerate}
\end{lemmasub}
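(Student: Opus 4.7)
The plan is to construct $\alpha_0$ by realizing the shift, on finite supports at a time, using the $S_\infty$-action $\betah$ on $\Mcal_\psi$. The key point is that although the shift $\alpha$ on $\Afr$ is not induced by any single permutation of $\Nats$, for any finite set of indices $S\subseteq\Nats$ one can find $\sigma\in S_\infty$ with $\sigma(i)=i+1$ for all $i\in S$, and the resulting automorphism $\betah_\sigma$ of $\Mcal_\psi$ automatically fixes $\Fc_\psi$ pointwise, since $\Fc_\psi$ is by definition the fixed algebra of the $\betah$-action.

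More precisely, for each $N\in\Nats$ let $\sigma_N\in S_\infty$ be the $(N+1)$-cycle $(1\;2\;\cdots\;N+1)$, so that $\sigma_N(i)=i+1$ for $1\le i\le N$, and set
\[
\REu_0^N:=\stalg\bigl(\Fc_\psi\cup\pi_\psi(\lambda_1(A))\cup\cdots\cup\pi_\psi(\lambda_N(A))\bigr),
\]
an increasing family of $*$-subalgebras with $\bigcup_{N\ge1}\REu_0^N=\REu_0$. By~\eqref{eq:betahprops}, $\betah_{\sigma_N}$ sends $\pi_\psi(\lambda_i(A))$ to $\pi_\psi(\lambda_{i+1}(A))$ for $1\le i\le N$ and fixes $\Fc_\psi$, so $\betah_{\sigma_N}(\REu_0^N)\subseteq\REu_0^{N+1}\subseteq\REu_0$. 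I therefore set $\alpha_0\restrict\REu_0^N:=\betah_{\sigma_N}\restrict\REu_0^N$. Because $\betah_{\sigma_N}$ is a $*$-automorphism of $\Mcal_\psi$ satisfying $\psih\circ\betah_{\sigma_N}=\psih$, each such restriction is an isometric $*$-homomorphism on its domain satisfying the analogues of~(i)--(iv).

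The main obstacle is showing that these restrictions patch together, i.e., that $\betah_{\sigma_{N+1}}$ and $\betah_{\sigma_N}$ agree on $\REu_0^N$. A direct computation shows that $\sigma_{N+1}\sigma_N^{-1}$ is the transposition $(1,N+2)$: it fixes each $j\in\{2,\ldots,N+1\}$ and swaps $1$ with $N+2$. Consequently $\betah_{\sigma_{N+1}\sigma_N^{-1}}$ fixes $\Fc_\psi$ and each $\pi_\psi(\lambda_j(A))$ for $j\in\{2,\ldots,N+1\}$. Since $\betah_{\sigma_N}(\REu_0^N)\subseteq\stalg\bigl(\Fc_\psi\cup\pi_\psi(\lambda_2(A))\cup\cdots\cup\pi_\psi(\lambda_{N+1}(A))\bigr)$, the automorphism $\betah_{\sigma_{N+1}\sigma_N^{-1}}$ acts as the identity on this image, and hence $\betah_{\sigma_{N+1}}=\betah_{\sigma_{N+1}\sigma_N^{-1}}\circ\betah_{\sigma_N}$ agrees with $\betah_{\sigma_N}$ on $\REu_0^N$. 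Therefore $\alpha_0$ is unambiguously defined on $\REu_0=\bigcup_N\REu_0^N$ and inherits all of~(i)--(iv) from its restrictions.
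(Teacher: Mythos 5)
Your proposal is correct and is essentially the paper's own argument: the paper defines $\alpha_0(x)=\betah_{c_m}(x)$ for the $m$-cycle $c_m=(1,2,\ldots,m)$ with $m$ large depending on $x$, which is exactly your $\betah_{\sigma_N}$ on $\REu_0^N$; you have merely written out in detail the stabilization/consistency check that the paper dismisses with ``we easily see.'' Your verification that $\sigma_{N+1}\sigma_N^{-1}=(1,N+2)$ acts trivially on $\betah_{\sigma_N}(\REu_0^N)$ is a correct and welcome elaboration, not a different route.
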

\begin{proof}
Taking the $m$-cycle $c_m=(1,2,\ldots,m)\in S_\infty$, we easily see that for all $x\in\REu_0$, $\beta_{c_m}(x)$ is
constant in $m$ for $m$ large enough, and we let
\begin{equation}\label{eq:alpha0}
\alpha_0(x)=\lim_{m\to\infty}\betah_{c_m}(x)=\betah_{c_M}(x)
\end{equation}
for $M$ large depending on $x$.
Clearly, $\alpha_0$ is a $*$-endomorphism and \eqref{lem:alpha0-1}-\eqref{lem:alpha0-4} hold.
\end{proof}

\begin{lemmasub}\label{lem:alphabeta}
Let $\sigma\in S_k$ and take $n\ge k$.
Then $\betah_\sigma\circ\alpha_0^n=\alpha_0^n$.
\end{lemmasub}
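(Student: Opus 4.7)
My plan is to show that $\betah_\sigma$ acts as the identity on the image of $\alpha_0^n$, and then conclude $\betah_\sigma\circ\alpha_0^n = \alpha_0^n$ pointwise on $\REu_0$.

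First I would identify where $\alpha_0^n$ lands. By iterating property \eqref{lem:alpha0-1} of Lemma~\ref{lem:alpha0}, one has $\alpha_0^n\circ\pi_i = \pi_{i+n}$ for every $i\in\Nats$, while property \eqref{lem:alpha0-3} gives $\alpha_0^n\restrict_{\Fc_\psi} = \id_{\Fc_\psi}$. Since $\alpha_0$ is a $*$-endomorphism of $\REu_0 = \stalg(\Fc_\psi\cup\pi_\psi(\Afr_0))$, it follows that $\alpha_0^n(\REu_0)$ is contained in the $*$-subalgebra
\[
\REu_{>n} := \stalg\bigl(\Fc_\psi\cup\bigcup_{j\ge n+1}\pi_j(A)\bigr)\subseteq\Mcal_\psi.
\]

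Next I would check that $\betah_\sigma$ fixes every element of $\REu_{>n}$. Since $\sigma\in S_k$ and $n\ge k$, the permutation $\sigma$ fixes every integer $j\ge n+1$, so $\beta_\sigma\circ\lambda_j = \lambda_j$ for all such $j$; combined with the first identity in \eqref{eq:betahprops}, this gives $\betah_\sigma(\pi_j(a)) = \pi_j(a)$ for all $j\ge n+1$ and $a\in A$. Meanwhile, by Definition~\ref{defi:fixedpt}, $\Fc_\psi$ is the fixed-point algebra of the action $\betah$, so $\betah_\sigma$ restricts to the identity on $\Fc_\psi$. Because $\betah_\sigma$ is a $*$-homomorphism, it therefore fixes every $*$-polynomial in these generators, i.e.\ $\betah_\sigma\restrict_{\REu_{>n}} = \id_{\REu_{>n}}$.

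Combining the two observations, for any $x\in\REu_0$ we have $\alpha_0^n(x)\in\REu_{>n}$ and hence $\betah_\sigma(\alpha_0^n(x)) = \alpha_0^n(x)$, which is the desired identity. There is no real obstacle here; the lemma is essentially a bookkeeping statement about where $\alpha_0^n$ maps things versus which indices $\sigma$ moves, and the only tiny subtlety is ensuring $\alpha_0^n$ is well-defined as a map $\REu_0\to\REu_0$, which is immediate from Lemma~\ref{lem:alpha0} since $\alpha_0$ is a $*$-endomorphism of $\REu_0$.
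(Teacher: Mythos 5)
Your proof is correct and follows essentially the same route as the paper's: the image of $\alpha_0^n$ lies in the $*$-algebra generated by $\Fc_\psi\cup\bigcup_{j>n}\pi_j(A)$, which is fixed pointwise by $\betah_\sigma$ when $n\ge k$. You merely spell out in slightly more detail why each generator is fixed, which the paper leaves implicit.
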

\begin{proof}
The image of $\alpha_0^n$ lies in the $*$-algebra generated by $\Fc_\psi\cup\bigcup_{j>n}\pi_j(A)$,
which, under the hypothesis $n\ge k$, lies in the fixed point algebra of the automorphism $\betah_\sigma$.
\end{proof}

\begin{lemmasub}\label{lem:alphanconst}
For all $x\in\REu_0$, all $y,z\in\Afr_0$ and all $\sigma\in S_\infty$, there is $N\in\Nats$ and there is $w\in\Cpx$ such that
for all $n\ge N$, we have
\[
\langle\alpha_0^n(x)\yh,\zh\rangle=\langle\alpha_0^n(\betah_\sigma(x))\yh,\zh\rangle=\langle\betah_\sigma(\alpha_0^n(x))\yh,\zh\rangle=w.
\]
\end{lemmasub}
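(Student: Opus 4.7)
I would prove the three equalities by combining the exchangeability statement of Lemma~\ref{lem:R0exch} with the invariance $\betah_\sigma\circ\alpha_0^n=\alpha_0^n$ from Lemma~\ref{lem:alphabeta}. First, I would reduce to the monomial case: write $x\in\REu_0$ as a finite sum of elements of the form $d_0\,\pi_\psi(\lambda_{i_1}(a_1))\,d_1\cdots\pi_\psi(\lambda_{i_m}(a_m))\,d_m$ with $d_\ell\in\Fc_\psi$ and $a_\ell\in A$, expand $y,z\in\Afr_0$ as finite sums of words in the $\lambda_i(A)$, let $L$ be an upper bound on the indices appearing in $y$ and $z$, let $k$ be an upper bound on the indices $i_\ell$ appearing in $x$ with also $\sigma\in S_k$, and set $N=\max(L,k)$.

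For the first equality, I would use that $\alpha_0^n$ fixes each $d_\ell$ and shifts $\pi_\psi(\lambda_i(a))$ to $\pi_\psi(\lambda_{i+n}(a))$, so that $\langle\alpha_0^n(x)\yh,\zh\rangle=\psih\bigl(\pi_\psi(z)^*\alpha_0^n(x)\pi_\psi(y)\bigr)$ expands into a finite sum of terms $\psih\bigl(d_0'\pi_\psi(\lambda_{j_1}(b_1))d_1'\cdots\pi_\psi(\lambda_{j_q}(b_q))d_q'\bigr)$ with each $d_r'\in\Fc_\psi\cup\{1\}$, where the indices $j_r$ coming from $y$ and $z$ lie in $\{1,\ldots,L\}$ while those coming from $\alpha_0^n(x)$ have the form $i_\ell+n\ge n+1$. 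For $n\ge N$ these two classes are disjoint, and the equivalence relation $r\sim r'\iff j_r=j_{r'}$ on $\{1,\ldots,q\}$ is independent of~$n$. Since any two tuples of indices with the same equivalence pattern are related by some element of $S_\infty$, Lemma~\ref{lem:R0exch} implies that $\psih$ of such a monomial depends only on this pattern. Consequently, $\langle\alpha_0^n(x)\yh,\zh\rangle$ is constant for $n\ge N$, and I would denote its common value by~$w$.

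For the second equality, I would apply exactly the same reasoning with $\betah_\sigma(x)$ in place of $x$. Because $\betah_\sigma$ fixes $\Fc_\psi$ and sends $\pi_\psi(\lambda_{i_\ell}(a))$ to $\pi_\psi(\lambda_{\sigma(i_\ell)}(a))$, the resulting monomials still have indices in $\{1,\ldots,k\}$, and they exhibit the same internal equivalence relation as those of $x$ (since $\sigma$ is a bijection of $\{1,\ldots,k\}$). Hence, for $n\ge N$, the full equivalence pattern on the expanded expression is identical to the one obtained for $x$, so the $\psih$-values stabilize to the same constant~$w$. The third equality is then immediate from Lemma~\ref{lem:alphabeta}: for $n\ge k$ one has $\betah_\sigma(\alpha_0^n(x))=\alpha_0^n(x)$, so this quantity coincides automatically with $\langle\alpha_0^n(x)\yh,\zh\rangle=w$.

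The main obstacle will be the combinatorial bookkeeping of indices and the verification that the equivalence patterns on the expanded monomials are genuinely independent of $n$ for $n\ge N$; this is routine because only finitely many indices appear in any given expansion, the shifted indices all exceed~$L$, and the partial bijection witnessing a common equivalence pattern can always be extended to an element of $S_\infty$ by the identity outside a sufficiently large finite set.
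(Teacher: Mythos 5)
Your proposal is correct and follows essentially the same route as the paper: reduce to monomials, observe that for $n$ exceeding the indices occurring in $y$ and $z$ the shifted index tuple has a kernel independent of $n$ (and unchanged when $i_\ell$ is replaced by $\sigma(i_\ell)$, since $\sigma$ is a bijection), apply the exchangeability of Lemma~\ref{lem:R0exch} to get the first two equalities with a common value $w$, and invoke Lemma~\ref{lem:alphabeta} for the third. The only difference is presentational: you make explicit the ``value depends only on the kernel of the index tuple'' bookkeeping that the paper leaves implicit.
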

\begin{proof}
We may without loss of generality assume $x$, $y$ and $z$ are monomials, i.e.,
\begin{align}
x&=d_0\pi_{i_1}(a_1)d_1\cdots\pi_{i_p}(a_p)d_p \label{eq:xmon} \\
y&=\pi_{j_1}(a_1')\pi_{j_2}(a_2')\cdots\pi_{j_q}(a_q') \notag \\
z&=\pi_{k_1}(a_1'')\pi_{k_2}(a_2'')\cdots\pi_{k_r}(a_r'') \notag
\end{align}
for some $p,q,r\in\Nats$, $d_0,\ldots,d_p\in\Fc_\psi$ and (for all indices $f$, $g$ and $h$ in the appropriate ranges)
$i_f,j_g,k_h\in\Nats$ and $a_f,a_g',a_h''\in A$.
Then we have
\[
\langle\alpha_0^n(x)\yh,\zh\rangle=
\psih\big(\pi_{k_r}(a_r'')^*\cdots\pi_{k_1}(a_1'')^*\;d_0\pi_{i_1+n}(a_1)d_1\cdots\pi_{i_p+n}(a_p)d_p\;
\pi_{j_1}(a_1')\cdots\pi_{j_q}(a_q')\big).
\]
From the exchangeability result of Lemma \ref{lem:R0exch}, we see that this quantity is independent of $n$ so long as
\begin{equation}\label{eq:nbig}
n\ge\max(j_1,\ldots,j_q,k_1,\ldots,k_r).
\end{equation}
Let $w$ be the common value of $\langle\alpha_0^n(x)\yh,\zh\rangle$ for $n$ satisfying~\eqref{eq:nbig}.
We also have
\begin{multline*}
\langle\alpha_0^n(\betah_\sigma(x))\yh,\zh\rangle \\
=\psih\big(\pi_{k_r}(a_r'')^*\cdots\pi_{k_1}(a_1'')^*\;d_0\pi_{\sigma(i_1)+n}(a_1)d_1\cdots\pi_{\sigma(i_p)+n}(a_p)d_p\;
\pi_{j_1}(a_1')\cdots\pi_{j_q}(a_q')\big)
\end{multline*}
and again invoking exchangeability, we get $\langle\alpha_0^n(\betah_\sigma(x))\yh,\zh\rangle=w$ for $n$ satisfying~\eqref{eq:nbig}.
Finally, the equality
\[
\langle\alpha_0^n(x)\yh,\zh\rangle=\langle\betah_\sigma(\alpha_0^n(x))\yh,\zh\rangle
\]
for all $n$ large enough follows from Lemma~\ref{lem:alphabeta}.
\end{proof}

Let
\[
\QEu_0=\stalg(\Tc_\psi\cup\pi_\psi(\Afr_0))
\]
and note that the endomorphism $\alpha_0$ of $\REu_0$ maps $\QEu_0$ into $\QEu_0$.
Consider the weakly dense $C^*$-subalgebras
\[
\REu_\psi=\overline{\REu_0}^{\|\cdot\|},\qquad\QEu_\psi=\overline{\QEu_0}^{\|\cdot\|}
\]
of $\Mcal_\psi$.
From Lemmas~\ref{lem:alpha0} and~\ref{lem:alphabeta}, we immediately have the following.
\begin{lemmasub}\label{lem:alpha}
There is a unique $*$-endomorphism $\alpha$ of $\REu_\psi$ satisfying
\begin{enumerate}[(i)]
\item\label{lem:alpha-1} $\alpha\circ\pi_i=\pi_{i+1}$ ($i\in\Nats$)
\item\label{lem:alpha-2} $\alpha\restrict_{\Fc_\psi}=\id_{\Fc_\psi}$
\end{enumerate}
Moreover, we have
\begin{enumerate}[(i)]
\setcounter{enumi}{2}
\item\label{lem:alpha-3} $\forall x\in\REu_\psi$, $\|\alpha(x)\|=\|x\|$
\item\label{lem:alpha-4} $\psih\circ\alpha=\psih\restrict_{\REu_\psi}$
\item\label{lem:alpha-5} $\alpha(\QEu_\psi)\subseteq\QEu_\psi$
\item\label{lem:alpha-6} if $\sigma\in S_k$ and $n\ge k$, then $\betah_\sigma\circ\alpha^n=\alpha^n$.
\end{enumerate}
\end{lemmasub}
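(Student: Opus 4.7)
The plan is to extend $\alpha_0$ from $\REu_0$ to its norm-closure $\REu_\psi$ by continuity, leveraging the isometry property established in Lemma~\ref{lem:alpha0}. Since $\alpha_0$ is an isometric $*$-endomorphism of the normed $*$-algebra $\REu_0\subseteq\Mcal_\psi$, it extends uniquely to an isometric $*$-endomorphism $\alpha$ of the $C^*$-algebra $\REu_\psi$. This simultaneously produces $\alpha$ and delivers property~(iii); properties~(i), (ii), and~(iv) are then inherited directly from the corresponding properties of $\alpha_0$, using for~(iv) that the vector state $\psih$ is norm-continuous on $\Mcal_\psi$.

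For uniqueness, I would invoke the standard fact that any $*$-homomorphism between $C^*$-algebras is automatically contractive, hence norm-continuous. Conditions~(i) and~(ii) then pin down $\alpha$ on the $*$-generating set $\Fc_\psi\cup\bigcup_i\pi_\psi\circ\lambda_i(A)$ of $\REu_0$, and since $\REu_0$ is dense in $\REu_\psi$, the extension is forced.

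Property~(v) follows from observing that $\alpha_0$ sends each $\pi_i(A)$ into $\pi_{i+1}(A)\subseteq\QEu_0$ and fixes $\Tc_\psi\subseteq\Fc_\psi$ pointwise, so $\alpha_0$ preserves the $*$-subalgebra $\QEu_0=\stalg(\Tc_\psi\cup\pi_\psi(\Afr_0))$; taking norm-closures yields $\alpha(\QEu_\psi)\subseteq\QEu_\psi$. For property~(vi), Lemma~\ref{lem:alphabeta} already gives $\betah_\sigma\circ\alpha_0^n=\alpha_0^n$ on $\REu_0$ whenever $\sigma\in S_k$ and $n\ge k$; both sides are norm-continuous on $\REu_\psi$ (since $\betah_\sigma$ is a $*$-automorphism of $\Mcal_\psi$ and $\alpha^n$ is isometric), so the identity persists throughout $\REu_\psi$.

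There is no substantive obstacle here: the entire lemma is a routine continuous-extension exercise building on Lemmas~\ref{lem:alpha0} and~\ref{lem:alphabeta}, exactly as the author's preceding remark (\emph{``From Lemmas~\ref{lem:alpha0} and~\ref{lem:alphabeta}, we immediately have the following''}) suggests. The only point requiring a second glance is the verification in~(v) that $\alpha_0$ preserves $\QEu_0$, and this is immediate from its defining properties.
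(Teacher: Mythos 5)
Your proof is correct and is exactly the routine continuity-and-density argument the paper intends: the authors state the lemma as following ``immediately'' from Lemmas~\ref{lem:alpha0} and~\ref{lem:alphabeta}, and your writeup simply spells out that intended extension of the isometric $*$-endomorphism $\alpha_0$ from the dense $*$-subalgebra $\REu_0$ to $\REu_\psi$, with each property transferred by norm-continuity (and uniqueness from automatic contractivity of $*$-homomorphisms). No gaps.
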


\begin{thmsub}\label{thm:E}
There is a conditional expectation $G_\psi$ (i.e., a projection of norm~$1$) from $\REu_\psi$ onto $\Fc_\psi$ given by
\begin{equation}\label{eq:WOTlim}
G_\psi(x)={\rm WOT-}\lim_{n\to\infty}\alpha^n(x),
\end{equation}
where the above limit is in weak operator topology.
This satisfies the following:
\begin{enumerate}[(i)]
\item\label{thm:E-1} $\psih\circ G_\psi=\psih\restrict_{\REu_\psi}$
\item\label{thm:E-2} $G_\psi\circ\alpha=G_\psi$
\item\label{thm:E-3} for all $\sigma\in S_\infty$, $G_\psi\circ\betah_\sigma\restrict_{\REu_\psi}=G_\psi$
\item\label{thm:E-4} the restriction $F_\psi$ of $G_\psi$ to $\QEu_\psi$ is a conditional expectation from $\QEu_\psi$ onto $\Tc_\psi$.
\end{enumerate}
\end{thmsub}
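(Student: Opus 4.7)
The plan is to define $G_\psi(x) = {\rm WOT-}\lim_{n\to\infty}\alpha^n(x)$ on the dense subalgebra $\REu_0$, extend by norm-continuity to $\REu_\psi$, and then verify both that the image lies in $\Fc_\psi$ and that the four properties hold. The nontrivial steps are establishing existence of the limit and correctly identifying its range; the enumerated properties then fall out quickly.

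For existence on $\REu_0$, I would use Lemma~\ref{lem:alphanconst}, which says that for each $x \in \REu_0$ and each pair $y, z \in \Afr_0$, the matrix coefficients $\langle \alpha^n(x)\yh, \zh\rangle$ are eventually constant in $n$. Combined with the uniform norm bound $\|\alpha^n(x)\| = \|x\|$ from Lemma~\ref{lem:alpha}\eqref{lem:alpha-3} and density of $\{\hat a : a \in \Afr_0\}$ in $\HEu_\psi$, a standard $3\epsilon$-argument shows that $(\alpha^n(x))_n$ is WOT-Cauchy, hence WOT-convergent by WOT-compactness of norm-bounded subsets of $\Mcal_\psi$. Calling the limit $G_\psi(x)$, WOT lower-semicontinuity of the norm gives $\|G_\psi(x)\| \leq \|x\|$, and a further $3\epsilon$-argument based on norm-approximating $\REu_\psi$ by $\REu_0$ extends $G_\psi$ to a norm-contractive linear map on all of $\REu_\psi$ defined by the same WOT limit.

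Next I would verify $G_\psi(\REu_\psi) \subseteq \Fc_\psi$: for $\sigma \in S_k$ and $n \geq k$, Lemma~\ref{lem:alpha}\eqref{lem:alpha-6} yields $\betah_\sigma(\alpha^n(x)) = \alpha^n(x)$, and since $\betah_\sigma$ is implemented by the unitary $U_\sigma$ (hence WOT-continuous), the limit is fixed by every $\betah_\sigma$, $\sigma \in S_\infty$. Because $\alpha$ is the identity on $\Fc_\psi$ by Lemma~\ref{lem:alpha}\eqref{lem:alpha-2}, $G_\psi$ restricts to the identity there, so it is a norm-one projection onto a $C^*$-subalgebra; Tomiyama's theorem then upgrades it to a conditional expectation.

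The enumerated properties follow in short order: \eqref{thm:E-1} from WOT-continuity of the vector state $\psih$ together with Lemma~\ref{lem:alpha}\eqref{lem:alpha-4}; \eqref{thm:E-2} from reindexing the limit, $G_\psi(\alpha(x)) = {\rm WOT-}\lim \alpha^{n+1}(x) = G_\psi(x)$; \eqref{thm:E-3} from Lemma~\ref{lem:alphanconst}, which gives $\alpha^n(x)$ and $\alpha^n(\betah_\sigma(x))$ the same eventual matrix coefficients on $\REu_0$, extended to $\REu_\psi$ by norm-continuity of $\betah_\sigma$ and $G_\psi$; and \eqref{thm:E-4} by noting that for monomials $x = d_0\pi_{i_1}(a_1)d_1\cdots\pi_{i_p}(a_p)d_p \in \QEu_0$ with $d_j \in \Tc_\psi$, the shifted element $\alpha^n(x)$ lies in $W^*(\bigcup_{j \geq N}\pi_\psi\circ\lambda_j(A))$ for $n \geq N$ (the $d_j$ being absorbed, as $\Tc_\psi$ is already contained in this algebra by definition), so the WOT limit lies in that $W^*$-algebra for every $N$, hence in $\Tc_\psi$. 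The main obstacle I anticipate is the careful extension of Lemma~\ref{lem:alphanconst} from its dense setting to WOT convergence on all of $\REu_\psi$ against all of $\HEu_\psi$, while tracking uniform norm bounds tightly enough to guarantee in \eqref{thm:E-4} that the limit lands in the strictly smaller $\Tc_\psi$ and not merely in $\Fc_\psi$.
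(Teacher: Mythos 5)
Your proposal is correct and follows essentially the same route as the paper's proof: existence of the WOT limit via Lemma~\ref{lem:alphanconst} plus the isometry of $\alpha$ and a density/$3\eps$ argument, range in $\Fc_\psi$ via the eventual $\betah_\sigma$-invariance of $\alpha^n(x)$, and property~\eqref{thm:E-4} via the observation that $\alpha^n$ of a $\QEu_0$-monomial lands in $W^*\bigl(\bigcup_{j>n}\pi_j(A)\bigr)$. The only cosmetic differences are your appeal to Tomiyama's theorem (unnecessary here, since the paper defines a conditional expectation to be a norm-one projection) and your use of WOT-continuity of $\betah_\sigma$ in place of the paper's repeated matrix-coefficient estimate.
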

\begin{proof}
Let $x\in\REu_\psi$ and $\xi,\eta\in\HEu_\psi$ and take $\eps>0$.
Take $x_0\in\REu_0$ such that $\|x-x_0\|<\eps$ and $y,z\in\Afr_0$ such that $\|\xi-\yh\|_2<\eps$ and $\|\eta-\zh\|_2<\eps$.
Then, for every $n\in\Nats$,
\begin{align}
|\langle\alpha^n(x)\xi,\eta&\rangle-\langle\alpha^n(x_0)\yh,\zh\rangle| \label{eq:xxe} \\
&\le|\langle\alpha^n(x-x_0)\xi,\eta\rangle|
  +|\langle\alpha^n(x_0)(\xi-\yh),\eta\rangle|
  +|\langle\alpha^n(x_0)\yh,\eta-\zh\rangle| \notag \\
&\le \eps\|\xi\|_2\|\eta\|_2 + \eps(\|x\|+\eps)\|\eta\|_2+\eps(\|x\|+\eps)(\|\xi\|_2+\eps). \notag
\end{align}
But by Lemma~\ref{lem:alphanconst}, $\langle\alpha^n(x_0)\yh,\zh\rangle$ is constant for large enough $n$.
We deduce that the quantity $\langle\alpha^n(x)\xi,\eta\rangle$ is Cauchy in $n$, and, therefore, the limit
on the right-hand-side of \eqref{eq:WOTlim} exists.
Hence, \eqref{eq:WOTlim} defines $G_\psi$ as a linear function from $\REu_\psi$ into $\Mcal_\psi$.
Since $\alpha$ is isometric, $G_\psi$ is contractive.

Let $\sigma\in S_\infty$.
We have the same upper bound for the quantity
\[
|\langle\betah_\sigma\circ\alpha^n(x)\xi,\eta\rangle-\langle\betah_\sigma\circ\alpha^n(x_0)\yh,\zh\rangle|
\]
as in~\eqref{eq:xxe}.
But by Lemma~\ref{lem:alphanconst}, $\langle\betah_\sigma\alpha^n(x_0)\yh,\zh\rangle=\langle\alpha^n(x_0)\yh,\zh\rangle$
for all $n$ sufficiently large.
This implies $\betah_\sigma\circ G_\psi(x)=G_\psi(x)$.
So $G_\psi(\REu_\psi)\subseteq\Fc_\psi$.

On the other hand, if $x\in\Fc_\psi$, then from Lemma~\ref{lem:alpha}\eqref{lem:alpha-2} we deduce $G_\psi(x)=x$.
Thus, $G_\psi$ is a projection of norm $1$ from $\REu_\psi$ onto $\Fc_\psi$.
Now properrty~\eqref{thm:E-1} follows from Lemma~\ref{lem:alpha}\eqref{lem:alpha-4}.
while property~\eqref{thm:E-2} is apparent from the definition.

For~\eqref{thm:E-3}, it will suffice to show
\[
\langle G_\psi(\betah_\sigma(x))\yh,\zh\rangle=\langle G_\psi(x)\yh,\zh\rangle
\]
for every $x\in\REu_0$ and $y,z\in\Afr_0$.
However, this follows because Lemma~\ref{lem:alphanconst} shows
\[
\langle\alpha_0^n(\betah_\sigma(x))\yh,\zh\rangle=\langle\alpha_0^n(x)\yh,\zh\rangle
\]
for all $n$ sufficiently large.

To prove~\eqref{thm:E-4} we claim that it will suffice to show $G_\psi(x)\in\Tc_\psi$ for every $x\in\QEu_0$.
Indeed, because $G_\psi$ is contractive and $\QEu_0$ is norm dense
in $\QEu_\psi$, it will follow that $G_\psi(\QEu_\psi)\subseteq\Tc_\psi$;
moreover, since $\Tc_\psi\subseteq\QEu_\psi$ and $G_\psi$ is the identity on $\Tc_\psi$, we will also have that $G_\psi(\QEu_\psi)=\Tc_\psi$.
We may without loss of generality assume $x$ is a monomial of the form~\eqref{eq:xmon} but with $d_0,\ldots,d_p\in\Tc_\psi$.
Now
\[
\alpha^n(x)=d_0\pi_{i_1+n}(a_1)d_1\cdots\pi_{i_p+n}(a_p)d_p\in\stalg\left(\Tc_\psi\cup \bigcup_{j>n}\pi_j(A)\right),
\]
so
\[
G_\psi(x)\in W^*\left(\Tc_\psi\cup\bigcup_{j>N}\pi_j(A)\right)\subseteq W^*\left(\,\bigcup_{j>N}\pi_j(A)\right)
\]
for every $N\in\Nats$.
Thus, $G_\psi(x)\in\Tc_\psi$.
\end{proof}

\begin{remarksub}\label{rem:WOTconst}
We record here for future reference the fact that follows from the exchangeability result of Lemma~\ref{lem:R0exch},
as used in the above proof:
for all $x,y,z\in\REu_0$,
there is $N$ sufficiently large that for all $n\ge N$, we have
\[
\psih(z^*\alpha^n(x)y)=\psih(z^*G_\psi(x)y).
\]
\end{remarksub}

As an immediate consequence of property~\eqref{thm:E-3} of the above theorem, we have
that $G_\psi$ is exchangeable, also with elements of $\Fc_\psi$ added in:
\begin{corsub}\label{cor:Fexch}
For all $n,i_1,\ldots,i_n\in\Nats$, $a_1,\ldots,a_n\in A$, $d_0,\ldots,d_n\in\Fc_\psi$ and all $\sigma\in S_\infty$
\[
G_\psi(d_0\pi_{i_1}(a_1)d_1\cdots\pi_{i_n}(a_n)d_n)=G_\psi(d_0\pi_{\sigma(i_1)}(a_1)d_1\cdots\pi_{\sigma(i_n)}(a_n)d_n).
\]
\end{corsub}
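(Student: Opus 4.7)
The plan is to derive this directly from property~\eqref{thm:E-3} of Theorem~\ref{thm:E}, namely that $G_\psi \circ \betah_\sigma\restrict_{\REu_\psi} = G_\psi$ for all $\sigma \in S_\infty$. The only task is to compute what $\betah_\sigma$ does to a monomial of the form $d_0\pi_{i_1}(a_1)d_1\cdots\pi_{i_n}(a_n)d_n$ when the $d_j$ lie in the fixed-point algebra $\Fc_\psi$.

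First I would note that, since $\betah_\sigma$ is a $*$-automorphism, it distributes across the product. The two pieces of behavior needed are: (a) each $d_j$ is fixed, because $d_j \in \Fc_\psi = \Mcal_\psi^{\betah}$ by definition; and (b) for each factor $\pi_{i_k}(a_k) = \pi_\psi(\lambda_{i_k}(a_k))$, the intertwining relation $\betah_\sigma \circ \pi_\psi = \pi_\psi \circ \beta_\sigma$ from~\eqref{eq:betahprops} combined with $\beta_\sigma \circ \lambda_i = \lambda_{\sigma(i)}$ gives
\[
\betah_\sigma(\pi_{i_k}(a_k)) = \pi_\psi(\beta_\sigma(\lambda_{i_k}(a_k))) = \pi_\psi(\lambda_{\sigma(i_k)}(a_k)) = \pi_{\sigma(i_k)}(a_k).
\]
Putting these together, one finds
\[
\betah_\sigma\bigl(d_0\pi_{i_1}(a_1)d_1\cdots\pi_{i_n}(a_n)d_n\bigr) = d_0\pi_{\sigma(i_1)}(a_1)d_1\cdots\pi_{\sigma(i_n)}(a_n)d_n.
\]
Note that the left-hand monomial lies in $\REu_0 \subseteq \REu_\psi$, so $\betah_\sigma$ may legitimately be applied and the identity from Theorem~\ref{thm:E}\eqref{thm:E-3} is available.

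Applying $G_\psi$ to both sides and invoking $G_\psi\circ\betah_\sigma = G_\psi$ on $\REu_\psi$ yields the desired equality. There is no real obstacle here; the whole corollary is a repackaging of the $\betah$-invariance of $G_\psi$ once one observes that $\Fc_\psi$ is, by its very definition, pointwise fixed by $\betah$.
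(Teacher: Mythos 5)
Your proof is correct and is exactly the argument the paper intends: the corollary is stated there as an immediate consequence of Theorem~\ref{thm:E}\eqref{thm:E-3}, and your computation of $\betah_\sigma$ on the monomial (fixing the $d_j\in\Fc_\psi$ and sending $\pi_{i_k}(a_k)$ to $\pi_{\sigma(i_k)}(a_k)$ via~\eqref{eq:betahprops}) is precisely the routine verification being left to the reader.
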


\medskip

\subsection{A normal conditional expectation}
\label{subsec:normal}

To summarize the earlier notation and constructions:
\renewcommand{\labelitemi}{$\bullet$}
\begin{itemize}
\item $\psi$ is a symmetric state on $\Afr=*_{i=1}^\infty A$,
\item $\pi_\psi$ is the GNS representation of $\psi$ on $\HEu_\psi=L^2(\Afr,\psi)$,
\item $\Mcal_\psi$ is the von Neumann algebra generated by $\pi_\psi(\Afr)$,
\item $\Fc_\psi$ is the fixed point algebra of the permutation action on $\Mcal_\psi$,
\item $\Tc_\psi\subseteq\Fc_\psi$ is the tail algebra of $\psi$,
\item $\REu_\psi$ is the $C^*$-algebra generated by $\pi_\psi(\Afr)\cup\Fc_\psi$,
\item $\QEu_\psi$ is the $C^*$-algebra generated by $\pi_\psi(\Afr)\cup\Tc_\psi$,
\item $G_\psi:\REu_\psi\to\Fc_\psi$ is the conditional expectation onto $\Fc_\psi$ constructed in Theorem~\ref{thm:E}.
\item $F_\psi:\QEu_\psi\to\Tc_\psi$ is the conditional expectation onto $\Tc_\psi$ that is the restriction of $G_\psi$.
\end{itemize}

The following example is due to Weihua Liu~\cite{Liu}.
It has $\Fc_\psi=\Tc_\psi$ and shows that the conditional expectation
$F_\psi$ need not have an extension to a normal conditional expectation $\Mcal_\psi\to\Tc_\psi$.
\begin{examplesub}\label{ex:Liu}
Let $A=\Cpx^3$ and let $a=1\oplus-1\oplus0\in A$.
Let $\HEu$ be a Hilbert space with orthonormal basis $(v_n)_{n\ge0}$ and with corresponding system of matrix units 
$(e_{ij})_{i,j\ge0}$ that densely span the compact operators $K(\HEu)$ on $\HEu$.
For every $n\in\Nats$
let $\pi_n:A\to B(\HEu)$ be the unital $*$-homomorphism so that
$\pi_n(a)=e_{0n}+e_{n0}$.

Let $\pi=*_{n=1}^\infty\pi_n:\Afr\to B(\HEu)$ be the unital free product representation.
The $C^*$-algebra generated by $\pi(\Afr)$ is easily seen to be $K(\HEu)+\Cpx1$, where $K(\HEu)$ is the algebra
of compact operators on $\HEu$.

Let $\psi$ be the state on $\Afr$ that is the composition of $\pi$ and the vector state $\langle\cdot\,v_0,v_0\rangle$.
Let $S_\infty\ni\sigma\mapsto U_\sigma$ be the unitary representation given by
\[
U_\sigma(v_j)=\begin{cases}
v_{\sigma(j)},&j>0 \\
v_0,&j=0.
\end{cases}
\]
Then $U_\sigma\pi_n(\cdot)U_\sigma^*=\pi_{\sigma(n)}$.
Since $U_\sigma v_0=v_0$ for all $\sigma$, we see that $\psi$ is a symmetric state on $\Afr$.
Moveover, $v_0$ is cyclic under the action of $\pi(\Afr)$, so that $(\pi,\HEu,v_0)$ is equivalent to the GNS representation $(\pi_\psi,\HEu_\psi,\oneh)$.

The action $\betah$ of $S_\infty$ is given by $\beta_\sigma=\Ad(U_\sigma)$ and the fixed point algebra $\Fc_\psi$ is
the set of all operators that commute with all $U_\sigma$.
We see that $\Fc_\psi$ is the two-dimensional $C^*$-algebra $\Cpx e_{00}+\Cpx(1-e_{00})$.
Since $e_{00}=\pi_n(a^2)\pi_{n+1}(a^2)$ for every $n\ge1$, we see that the tail algebra, $\Tc_\psi$, contains $e_{00}$ and is equal to $\Fc_\psi$.
In particular, $\Tc_\psi=\Fc_\psi\subseteq\pi_\psi(\Afr)$ and $\QEu_\psi=\REu_\psi=\pi_\psi(\Afr)$.

The conditional expectation $G_\psi:\REu_\psi\to\Fc_\psi$ is given by
\[
G_\psi(x)=e_{00}xe_{00}+\tau(x)(1-e_{00}),
\]
where $\tau:\REu_\psi\to\Cpx$ is the character of $\REu_\psi$.
The von Neumann algebra $\Mcal_\psi$ is generated by $\REu_\psi$ and, thus, is equal to all of $B(\HEu)$.
Note that $e_{00}+\cdots+e_{nn}$ is increasing and converges in strong-operator topology to $1$ as $n\to\infty$,
but that $G_\psi(e_{00}+\cdots+e_{nn})=e_{00}$ for all $n$ while $G_\psi(1)=1$.
Thus, we see that $G_\psi$ has no extension to a normal conditional expectation from $B(\HEu)$ to $\Fc_\psi$.
\end{examplesub}

\medskip
We now build on the constructions performed in Section~\ref{subsec:tail}
to construct normal conditional expectations from possibly larger von Neumann
algebras onto the fixed point algebra $\Fc_\psi$ and the tail algebra $\Tc_\psi$.
For the most part, we will be more interested in the tail algebra than in the fixed point algebra, and our exposition will reflect this.

\begin{propsub}\label{prop:Npsi}
Given a symmetric state $\psi$ on $\Afr=*_1^\infty A$, there is a von Neumann algebra $\Nc_\psi$ and an injective, unital $*$-homomorphism
$\sigma_\psi:\QEu_\psi\to\Nc_\psi$ such that
\begin{enumerate}[(i)]
\item\label{it:Npsi-1} $\Nc_\psi$ is generated as a von Neumann algebra by $\sigma_\psi(\QEu_\psi)$,
\item\label{it:Npsi-2} the restriction of $\sigma_\psi$ to $\Tc_\psi\subseteq\QEu_\psi$ is an injective, normal $*$-homomorphism,
\item\label{it:Npsi-3} upon identifying $\QEu_\psi$ and $\Tc_\psi$ with their images under $\sigma_\psi$,
the conditional expectation $F_\psi:\QEu_\psi\to\Tc_\psi$ extends to a normal conditional expectation $E_\psi:\Nc_\psi\to\Tc_\psi$,
whose GNS representation is faithful on $\Nc_\psi$,
\item\label{it:Npsi-4} the image of $\Nc_\psi$
under the GNS representation $\pi_{\psih\circ E_\psi}$ of the normal state $\psih\restrict_{\Tc_\psi}\circ E_\psi$
is the von Neumann algebra $\Mcal_\psi$, so $\Mcal_\psi$ may be identified with $R\Nc_\psi$ for a central projection $R$ of $\Nc_\psi$
and the GNS representation $\pi_{\psih\circ E_\psi}$ itself with $\Nc_\psi\ni x\mapsto Rx$.
\end{enumerate}
\end{propsub}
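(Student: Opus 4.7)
The plan is to realize $\Nc_\psi$ as the bicommutant of $\QEu_\psi$ acting on the interior tensor product Hilbert module associated to $F_\psi$. Concretely, I form the pre-Hilbert $\Tc_\psi$-module $L^2(\QEu_\psi,F_\psi)$, the separation-completion of $\QEu_\psi$ under the $\Tc_\psi$-valued inner product $\langle a,b\rangle=F_\psi(a^*b)$, which is well-defined because $F_\psi$ is a completely positive $\Tc_\psi$-bimodule map. Since $\Tc_\psi\subseteq\Mcal_\psi\subseteq B(\HEu_\psi)$ acts faithfully and normally on $\HEu_\psi$, I form the interior tensor product Hilbert space $\HEu_1:=L^2(\QEu_\psi,F_\psi)\otimes_{\Tc_\psi}\HEu_\psi$, let $\sigma_\psi:\QEu_\psi\to B(\HEu_1)$ be the left-multiplication representation $\sigma_\psi(x)(a\otimes\eta):=(xa)\otimes\eta$, and set $\Nc_\psi:=\sigma_\psi(\QEu_\psi)''$. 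This gives~(i) by construction.

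The key tool is the isometry $V:\HEu_\psi\to\HEu_1$, $V\eta:=1\otimes\eta$, isometric because $F_\psi(1)=1$. The right $\Tc_\psi$-linearity of the tensor product combined with $F_\psi\restrict_{\Tc_\psi}=\id$ yields, for $d\in\Tc_\psi$ and $x\in\QEu_\psi$,
\[
\sigma_\psi(d)V=V\pi_\psi(d),\qquad V^*\sigma_\psi(d)=\pi_\psi(d)V^*,\qquad V^*\sigma_\psi(x)V=\pi_\psi(F_\psi(x)).
\]
The assignment $T\mapsto V^*TV$ is weak-operator continuous, takes $\sigma_\psi(\QEu_\psi)$ into $\pi_\psi(\Tc_\psi)$, and therefore takes $\Nc_\psi$ into $\pi_\psi(\Tc_\psi)''=\Tc_\psi\subseteq B(\HEu_\psi)$.

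The main obstacle is part~(ii), specifically the normality of $\sigma_\psi\restrict_{\Tc_\psi}$; injectivity is immediate from $V^*\sigma_\psi(d)V=\pi_\psi(d)$ and injectivity of $\pi_\psi\restrict_{\Tc_\psi}$. For normality I invoke standard Hilbert-module theory: by Kasparov's stabilization theorem, applied to a free module of sufficiently large rank, $L^2(\QEu_\psi,F_\psi)$ is an orthogonal direct summand of a free Hilbert $\Tc_\psi$-module $H_{\Tc_\psi}=\bigoplus_\iota\Tc_\psi$; consequently $\HEu_1$ embeds isometrically and $\Tc_\psi$-equivariantly into $H_{\Tc_\psi}\otimes_{\Tc_\psi}\HEu_\psi=\bigoplus_\iota\HEu_\psi$, on which $\Tc_\psi$ acts as an amplification of the normal representation $\pi_\psi\restrict_{\Tc_\psi}$. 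A subrepresentation of a normal representation is normal, so $\sigma_\psi\restrict_{\Tc_\psi}$ is normal. With this in hand I identify $\sigma_\psi(\Tc_\psi)$ with $\Tc_\psi$ and define $E_\psi(T):=V^*TV$ as a map $\Nc_\psi\to\Tc_\psi\subseteq\Nc_\psi$, which is normal, unital and contractive; the intertwining relations produce the $\Tc_\psi$-bimodule property, while $V^*\sigma_\psi(x)V=\sigma_\psi(F_\psi(x))$ shows $E_\psi$ extends $F_\psi$ and acts as the identity on $\sigma_\psi(\Tc_\psi)$, making it a normal conditional expectation. For the faithful GNS claim in~(iii), if $E_\psi(S^*T^*TS)=0$ for all $S\in\Nc_\psi$ then $TSV=0$ for all $S\in\Nc_\psi$, so $T$ vanishes on the closed span of $\sigma_\psi(\QEu_\psi)V(\HEu_\psi)=\{a\otimes\eta:a\in\QEu_\psi,\,\eta\in\HEu_\psi\}$, which is $\HEu_1$, hence $T=0$.

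For~(iv), set $\omega:=\psih\restrict_{\Tc_\psi}\circ E_\psi$, a normal state on $\Nc_\psi$ implemented by the unit vector $\xi_1:=V\oneh=1\otimes\oneh$. Using Theorem~\ref{thm:E}\eqref{thm:E-1} (that $\psih\circ F_\psi=\psih$ on $\QEu_\psi$), I compute $\|\sigma_\psi(a)\xi_1\|^2=\psih(F_\psi(a^*a))=\psih(a^*a)=\|\pi_\psi(a)\oneh\|^2$, which yields a unitary $W:\overline{\sigma_\psi(\QEu_\psi)\xi_1}\to\HEu_\psi$ with $W\sigma_\psi(a)\xi_1=\pi_\psi(a)\oneh$, intertwining $\sigma_\psi(a)$ with $\pi_\psi(a)$. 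Since $\omega$ is normal, the GNS representation $\pi_\omega=\pi_{\psih\circ E_\psi}$ is normal, so $\ker\pi_\omega=(1-R)\Nc_\psi$ for a central projection $R\in Z(\Nc_\psi)$ and $\pi_\omega(\Nc_\psi)\cong R\Nc_\psi$ via $Rx\leftrightarrow\pi_\omega(x)$. By weak-operator density of $\sigma_\psi(\QEu_\psi)$ in $\Nc_\psi$ and the intertwining, $\pi_\omega(\Nc_\psi)=\pi_\omega(\sigma_\psi(\QEu_\psi))''=W\pi_\psi(\QEu_\psi)''W^*=\Mcal_\psi$, which gives the required identification $R\Nc_\psi\cong\Mcal_\psi$ under which $\pi_\omega$ is $x\mapsto Rx$.
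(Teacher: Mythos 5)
Your construction is, in all essentials, the paper's own: you form the Hilbert $\Tc_\psi$-module $L^2(\QEu_\psi,F_\psi)$, induce via a faithful normal representation of $\Tc_\psi$ (you take the defining one on $\HEu_\psi$; the paper takes an arbitrary $\kappa$ and then checks independence of the choice), let $\Nc_\psi$ be the von Neumann algebra generated by the induced representation, and implement $E_\psi$ by compression onto the complemented copy of the coefficient representation --- your $V^*(\cdot)V$ is exactly the paper's compression by the projection $P=F_\psi\otimes 1$. Parts~(i), (iii) and~(iv) are handled correctly and essentially as in the paper.

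The genuine gap is your argument for normality of $\sigma_\psi\restrict_{\Tc_\psi}$ in~(ii). Kasparov stabilization provides a unitary of \emph{right} Hilbert $\Tc_\psi$-modules $L^2(\QEu_\psi,F_\psi)\oplus H_{\Tc_\psi}\cong H_{\Tc_\psi}$; it is not equivariant for the \emph{left} action of $\Tc_\psi$, so after inducing, the summand of $\bigoplus_\iota\HEu_\psi$ that you obtain carries the left $\Tc_\psi$-action transported by that unitary, not the restriction of the diagonal action $\bigoplus_\iota \pi_\psi(d)$; the phrase ``subrepresentation of an amplification'' therefore does not apply. In fact no argument from general Hilbert-module theory can close this: for a general conditional expectation $F$ from a $C^*$-algebra onto a von Neumann subalgebra $\Tc$, the functionals $d\mapsto\langle F(b^*da)\xi,\eta\rangle$ may be singular (e.g.\ if $F(v^*dv)=\omega(d)1$ for a singular state $\omega$ on $\Tc=\ell^\infty$), and then the induced left action of $\Tc$ is not normal. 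One must use the specific structure of $F_\psi$. A correct route: since
$\langle\sigma_\psi(d)(\ah\otimes\xi),\bh\otimes\eta\rangle=\langle F_\psi(b^*da)\xi,\eta\rangle$,
it suffices to show $d\mapsto F_\psi(b^*da)$ is normal on bounded sets; by Lemma~\ref{lem:alphanconst} and Remark~\ref{rem:WOTconst}, whose threshold $N$ depends only on the vectors and not on the $\Fc_\psi$-coefficients of the middle word (hence is uniform in $d$), one has, for monomials $a,b\in\QEu_0$ and $y,z\in\Afr_0$,
\[
\langle F_\psi(b^*da)\yh,\zh\rangle=\langle d\,\alpha^n(a)\yh,\;\alpha^n(b)\zh\rangle\qquad(n\ge N(y,z),\ d\in\Tc_\psi),
\]
and the right-hand side is manifestly normal in $d$; a boundedness and density argument finishes it. Note that this point also propagates into~(iii), since your $E_\psi$, viewed as a map into $\Nc_\psi$, is $\sigma_\psi\circ(V^*(\cdot)V)$ and its normality uses normality of $\sigma_\psi\restrict_{\Tc_\psi}$. (You correctly identified this as the main obstacle --- the paper's own proof passes over it in silence --- but the justification you offer does not close it.)
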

\begin{proof}
Let $\pi_{F_\psi}$ denote the GNS representation of $\QEu_\psi$ on the right Hilbert $\Tc_\psi$-module $L^2(\QEu_\psi,F_\psi)$.
Since $\psih\restrict_{\QEu_\psi}=\psih\circ F_\psi$ and since the GNS representation of $\psih$ is faithful on $\QEu_\psi$,
we see that $\pi_{F_\psi}$ is faithful.
Moreover, the projection $F_\psi$ splits off a copy of $\Tc_\psi$ from $L^2(\QEu_\psi,F_\psi)$ as a complemented submodule:
\begin{equation}\label{eq:Tsummand}
L^2(\QEu_\psi,F_\psi)=L^2(\QEu_\psi,F_\psi)\oup\oplus\Tc_\psi.
\end{equation}
Let $\kappa$ be a normal, faithful $*$-representation of $\Tc_\psi$ on a Hilbert space $\Vc$.
Let
\begin{equation}\label{eq:W}
\Wc=L^2(\QEu_\psi,F_\psi)\otimes_\kappa\Vc
\end{equation}
be the interior tensor product of Hilbert modules (see, e.g., \cite{L95});
thus, $\Wc$ is a Hilbert space and the action $\pi_{F_\psi}$ of $\QEu_\psi$ on $L^2(\QEu_\psi,F_\psi)$ gives rise to a $*$-representation
$\sigma_\psi:=\pi_{F_\psi}\otimes1$ of $\QEu_\psi$ on $\Wc$, given by $\sigma_\psi(x)=\pi_{F_\psi}(x)\otimes\id_\Vc$.
We let $\Nc_\psi$ denote the von Neumann algebra generated by $\sigma_\psi(\QEu_\psi)$.
The Hilbert module projection $F_\psi$ that projects onto the summand $\Tc_\psi$ in~\eqref{eq:Tsummand} induces the Hilbert space projection
$P=F_\psi\otimes 1$ on $\Wc$, whose range space is $\Tc_\psi\otimes_\kappa\Vc$, which is just a copy of $\Vc$.
The action of $\Tc_\psi$ on $\Wc$ via $\pi_{F_\psi}\otimes1$ commutes with the projection $P$ and we have
\[
\sigma_\psi(x)P=\kappa(x),\qquad(x\in\Tc_\psi).
\]
Moreover, we have
\[
P\sigma_\psi(x)P=\sigma_\psi(F_\psi(x))P\qquad(x\in\QEu_\psi).
\]
Since $\kappa$ is faithful, we may identify $\Tc_\psi\subseteq\QEu_\psi$ with $\sigma_\psi(\Tc_\psi)P$ and, doing so,
we find that the compression by $P$ implements the conditional expectation $F_\psi:\QEu_\psi\to\Tc_\psi$.
Thus, compression by $P$ realizes a normal conditional expectation
\[
E_\psi:\Nc_\psi\to\Tc_\psi
\]
that extends the conditional expectation $F_\psi$.

The vector $\oneh\in L^2(\QEu_\psi,F_\psi)$ is cyclic for the left action of $\QEu_\psi$, we see that the image of the projection $P$ is cyclic for the action
of $\Nc_\psi$ on $\Wc$, in the sense that $\lspan\bigcup_{x\in\Nc_\psi}xP(\Wc)$ is dense in $\Wc$.
Thus, the GNS representation of the conditional expectation $E_\psi$ is faithful on $\Nc_\psi$.

Since any two normal, faithful $*$-representations of a von Neumann algebra $\Tc_\psi$ can be obtained, one from the other,
by (a) inducing, i.e., tensoring with the identity on an infinite dimensional Hilbert space and (b) reducing, i.e., compressing by a projection
in the commutant, and since these operations when performed on $\kappa$ above carry over to the representation of $\Nc_\psi$,
we find that the constructions of the von Neumann algebra $\Nc_\psi$ and of the normal conditional expectation $E_\psi$ are independent of
the choice of $\kappa$.

The composition $\psih\restrict_{\Tc_\psi}\circ E_\psi$
is a normal state on $\Nc_\psi$, and the composition $\psih\restrict_{\Tc_\psi}\circ E_\psi\circ\sigma_\psi\circ\pi_\psi$ equals the original state
$\psi$ on $\QEu_\psi$.
Thus, we find that $\Mcal_\psi$ is the image of $\Nc_\psi$ under the GNS representation of $\psih\restrict_{\Tc_\psi}\circ E_\psi$ and may, therefore,
be identified with $R\Nc_\psi$ for some central projection $R\in Z(\Nc_\psi)$, and the GNS representation itself with the map
$\Nc_\psi\ni x\mapsto Rx$.
\end{proof}

A precisely analogous result with $\Fc_\psi$ and $\REu_\psi$ replacing $\Tc_\psi$ and $\QEu_\psi$, respectively, can be
obtained by repeating the above construction,
{\em mutatis mutandis}.
We now state this result in order to introduce notation that will be needed below.
\begin{propsub}\label{prop:Ppsi}
Given a symmetric state $\psi$ on $\Afr=*_1^\infty A$, there is a von Neumann algebra $\Pc_\psi$ and an injective, unital $*$-homomorphism
$\eta_\psi:\REu_\psi\to\Pc_\psi$ such that
\begin{enumerate}[(i)]
\item\label{it:Ppsi-1} $\Pc_\psi$ is generated as a von Neumann algebra by $\eta_\psi(\REu_\psi)$,
\item\label{it:Ppsi-2} the restriction of $\eta_\psi$ to $\Fc_\psi\subseteq\REu_\psi$ is an injective, normal $*$-homomorphism,
\item\label{it:Ppsi-3} upon identifying $\REu_\psi$ and $\Fc_\psi$ with their images under $\eta_\psi$,
the conditional expectation $F_\psi:\REu_\psi\to\Fc_\psi$ extends to a normal conditional expectation $H_\psi:\Pc_\psi\to\Fc_\psi$,
whose GNS representation is faithful on $\Pc_\psi$,
\item\label{it:Ppsi-4} the image of $\Pc_\psi$ under the GNS representation $\pi_{\psih\circ\Fc_\psi}$ of the normal state $\psih\restrict_{\Fc_\psi}\circ H_\psi$
is the von Neumann algebra $\Mcal_\psi$, so $\Mcal_\psi$ may be identified with $R\Pc_\psi$ for a central projection $R$ of $\Pc_\psi$
and the GNS representation $\pi_{\psih\circ\Fc_\psi}$ itself with $\Pc_\psi\ni x\mapsto Rx$.
\end{enumerate}
\end{propsub}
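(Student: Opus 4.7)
The plan is to follow the proof of Proposition~\ref{prop:Npsi} verbatim, replacing the triple $(\Tc_\psi,\QEu_\psi,F_\psi)$ by $(\Fc_\psi,\REu_\psi,G_\psi)$ throughout. The two inputs that make this substitution work are (a) Theorem~\ref{thm:E}, which gives $G_\psi:\REu_\psi\to\Fc_\psi$ as a conditional expectation with $\psih\restrict_{\REu_\psi}=\psih\restrict_{\Fc_\psi}\circ G_\psi$, and (b) the fact that the GNS representation of $\psih\restrict_{\REu_\psi}$ on $\REu_\psi$ is faithful, which is immediate because $\REu_\psi\subseteq\Mcal_\psi$ and $\oneh\in\HEu_\psi$ is cyclic for $\pi_\psi(\Afr)\subseteq\REu_\psi$.

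First I would form the right Hilbert $\Fc_\psi$-module $L^2(\REu_\psi,G_\psi)$ together with its canonical left GNS representation $\pi_{G_\psi}$ of $\REu_\psi$; the observation in (b), combined with $\psih\restrict_{\REu_\psi}=\psih\restrict_{\Fc_\psi}\circ G_\psi$, shows that $\pi_{G_\psi}$ is faithful, and the defining projection $G_\psi$ splits off a copy of $\Fc_\psi$ as a complemented submodule. Next, I would pick any normal faithful $*$-representation $\kappa$ of the von Neumann algebra $\Fc_\psi$ on a Hilbert space $\Vc$, form the interior tensor product $\Wc=L^2(\REu_\psi,G_\psi)\otimes_\kappa\Vc$, and set $\eta_\psi=\pi_{G_\psi}\otimes 1$ and $\Pc_\psi=\eta_\psi(\REu_\psi)''$. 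This gives~\eqref{it:Ppsi-1} by construction, and the Hilbert-space projection $P=G_\psi\otimes 1$ onto $\Fc_\psi\otimes_\kappa\Vc\cong\Vc$ satisfies $\eta_\psi(x)P=\kappa(x)P$ for $x\in\Fc_\psi$; this identifies $\Fc_\psi$ with $\eta_\psi(\Fc_\psi)P$ and makes the restriction of $\eta_\psi$ to $\Fc_\psi$ normal, proving~\eqref{it:Ppsi-2}.

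The identity $P\eta_\psi(x)P=\eta_\psi(G_\psi(x))P$ for $x\in\REu_\psi$ then says that compression by $P$ extends $G_\psi$ to a normal conditional expectation $H_\psi:\Pc_\psi\to\Fc_\psi$, while cyclicity of $\oneh$ in $L^2(\REu_\psi,G_\psi)$ for the left action of $\REu_\psi$ transfers to cyclicity of $P\Wc$ for $\Pc_\psi$, which ensures that the GNS representation of $H_\psi$ is faithful on $\Pc_\psi$; this yields~\eqref{it:Ppsi-3}. Independence from the choice of $\kappa$ is the usual observation that any two normal faithful representations of $\Fc_\psi$ are related by amplification and reduction, both of which lift to $\Pc_\psi$.

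For~\eqref{it:Ppsi-4}, the composition $\psih\restrict_{\Fc_\psi}\circ H_\psi$ is a normal state on $\Pc_\psi$ which, when restricted and pulled back through $\eta_\psi\circ\pi_\psi$, recovers the original state $\psi$ on $\Afr$. Consequently, the GNS representation of $\psih\restrict_{\Fc_\psi}\circ H_\psi$ carries $\Pc_\psi$ onto the von Neumann algebra generated by $\pi_\psi(\Afr)$, namely $\Mcal_\psi$; because this representation is normal, its kernel is a weak$^*$-closed ideal in $\Pc_\psi$ and hence of the form $(1-R)\Pc_\psi$ for a central projection $R\in Z(\Pc_\psi)$, so $\Mcal_\psi\cong R\Pc_\psi$ and the representation itself is the map $x\mapsto Rx$. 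I do not anticipate a serious obstacle, as the entire argument is a direct parallel of that for Proposition~\ref{prop:Npsi}; the only point requiring care is the verification that $\pi_{G_\psi}$ is faithful, which uses the agreement of $\psih$ with $\psih\circ G_\psi$ on $\REu_\psi$ recorded in Theorem~\ref{thm:E}\eqref{thm:E-1}.
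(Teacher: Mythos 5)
Your proposal is correct and is exactly the paper's intended argument: the paper gives no separate proof of this proposition, stating only that it follows from the proof of Proposition~\ref{prop:Npsi} \emph{mutatis mutandis}, which is precisely the substitution $(\Tc_\psi,\QEu_\psi,F_\psi)\mapsto(\Fc_\psi,\REu_\psi,G_\psi)$ that you carry out. Your two preliminary checks (that $\psih\restrict_{\REu_\psi}=\psih\circ G_\psi$ from Theorem~\ref{thm:E}\eqref{thm:E-1} and that the GNS representation of $\psih$ on $\REu_\psi$ is faithful) are exactly the points needed for the substitution to go through.
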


We now show that the above constructions, in the case that restrictions of $\psih$ to $\Tc_\psi$ or $\Fc_\psi$ have faithful GNS representations, yield
normal conditional expectations from $\Mcal_\psi$ itself,
and we give sufficient conditions for $\Fc_\psi=\Tc_\psi$.
\begin{propsub}\label{prop:N=M}
\begin{enumerate}[(i)]
\item\label{it:N=MbyT} If the restriction of of $\psih$ to $\Tc_\psi$ has faithful GNS representation,
then $\Nc_\psi=\Mcal_\psi$.
Thus, the conditional expectation $F_\psi:\QEu_\psi\to\Tc_\psi$
has an extension to a normal conditional expectation $E_\psi:\Mcal_\psi\to\Tc_\psi$ onto $\Tc_\psi$.
\item\label{it:P=MbyD} If the restriction of of $\psih$ to $\Fc_\psi$ has faithful GNS representation,
then $\Pc_\psi=\Mcal_\psi$.
Thus, the conditional expectation $G_\psi:\REu_\psi\to\Fc_\psi$
has an extension to a normal conditional expectation $H_\psi:\Mcal_\psi\to\Fc_\psi$ onto $\Fc_\psi$.
\item\label{it:T=D} If the restrictions of of $\psih$ to $\Fc_\psi$ and to $\Tc_\psi$ both have faithful GNS representations,
(for example, if the restriction of $\psih$ to $\Fc_\psi$ is faithful), then $\Tc_\psi=\Fc_\psi$.
\end{enumerate}
\end{propsub}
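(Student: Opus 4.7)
The plan is to handle parts~\eqref{it:N=MbyT} and~\eqref{it:P=MbyD} by showing that, under the faithful GNS hypotheses, the central projection $R$ from Proposition~\ref{prop:Npsi}\eqref{it:Npsi-4} (and its analog in Proposition~\ref{prop:Ppsi}\eqref{it:Ppsi-4}) must equal $1$; then part~\eqref{it:T=D} will follow from \eqref{it:N=MbyT} and \eqref{it:P=MbyD} by a Kaplansky density argument applied to $H_\psi$.

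For~\eqref{it:N=MbyT}, Proposition~\ref{prop:Npsi}\eqref{it:Npsi-4} identifies $\Mcal_\psi$ with $R\Nc_\psi$ via the map $x\mapsto Rx$, which is the GNS representation of the normal state $\psih\restrict_{\Tc_\psi}\circ E_\psi$ on $\Nc_\psi$; so it suffices to show this GNS representation is faithful. Writing $\kappa$ for the GNS representation of $\psih\restrict_{\Tc_\psi}$ on its GNS Hilbert space $\Vc$, I will realize the GNS space of $\psih\restrict_{\Tc_\psi}\circ E_\psi$ as the interior tensor product $L^2(\Nc_\psi,E_\psi)\otimes_\kappa\Vc$, with the representation of $\Nc_\psi$ given by $\pi_{E_\psi}(\cdot)\otimes 1$. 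Given $x\in\Nc_\psi$ with $x\ne 0$, Proposition~\ref{prop:Npsi}\eqref{it:Npsi-3} yields $\xi\in L^2(\Nc_\psi,E_\psi)$ with $\pi_{E_\psi}(x)\xi\ne 0$, so that $t:=\langle\pi_{E_\psi}(x)\xi,\pi_{E_\psi}(x)\xi\rangle_{\Tc_\psi}$ is a nonzero positive element of $\Tc_\psi$; faithfulness of $\kappa$ then produces $\eta\in\Vc$ with $\langle\eta,\kappa(t)\eta\rangle\ne 0$, which amounts to $\|(\pi_{E_\psi}(x)\otimes 1)(\xi\otimes\eta)\|^2\ne 0$. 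Hence $\pi_{E_\psi}\otimes 1$ is faithful, giving $R=1$ and $\Mcal_\psi=\Nc_\psi$, with $E_\psi$ providing the desired normal extension of $F_\psi$. Part~\eqref{it:P=MbyD} follows by the identical argument, \emph{mutatis mutandis}, using Proposition~\ref{prop:Ppsi}.

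For~\eqref{it:T=D}, the hypothesis ensures faithful GNS for both $\psih\restrict_{\Tc_\psi}$ and $\psih\restrict_{\Fc_\psi}$, so both~\eqref{it:N=MbyT} and~\eqref{it:P=MbyD} apply, producing normal conditional expectations $E_\psi:\Mcal_\psi\to\Tc_\psi$ and $H_\psi:\Mcal_\psi\to\Fc_\psi$. Because $\pi_\psi(\Afr)\subseteq\QEu_\psi\subseteq\REu_\psi$ and $H_\psi\restrict_{\REu_\psi}=G_\psi$ by Proposition~\ref{prop:Ppsi}\eqref{it:Ppsi-3}, while $G_\psi\restrict_{\QEu_\psi}=F_\psi$ takes values in $\Tc_\psi$, I see that $H_\psi(\pi_\psi(\Afr))\subseteq\Tc_\psi$. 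For any $d\in\Fc_\psi$, Kaplansky density provides a net $(y_\alpha)$ in $\pi_\psi(\Afr)$ with $\|y_\alpha\|\le\|d\|$ and $y_\alpha\to d$ in weak operator topology; normality of $H_\psi$, applied on this bounded net where weak and $\sigma$-weak topologies coincide, yields $H_\psi(y_\alpha)\to H_\psi(d)=d$ in WOT. Since each $H_\psi(y_\alpha)$ lies in the WOT-closed algebra $\Tc_\psi$, we conclude $d\in\Tc_\psi$; combined with Lemma~\ref{lem:TinD}, this gives $\Tc_\psi=\Fc_\psi$.

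The main obstacle I anticipate is the interior-tensor-product step in~\eqref{it:N=MbyT}: identifying the GNS space of $\psih\restrict_{\Tc_\psi}\circ E_\psi$ with $L^2(\Nc_\psi,E_\psi)\otimes_\kappa\Vc$ and carefully verifying that faithfulness of the two constituent representations $\pi_{E_\psi}$ and $\kappa$ propagates to faithfulness of the induced representation $\pi_{E_\psi}\otimes 1$ on the tensor product Hilbert space. Once this is established, \eqref{it:P=MbyD} is parallel and the Kaplansky-density argument for~\eqref{it:T=D} is straightforward.
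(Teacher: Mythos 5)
Your proof is correct, but it reaches parts~\eqref{it:N=MbyT} and~\eqref{it:P=MbyD} by a different mechanism than the paper. The paper exploits the remark at the end of the proof of Proposition~\ref{prop:Npsi} that the construction of $\Nc_\psi$ is independent of the choice of the normal faithful representation $\kappa$ of $\Tc_\psi$: under the hypothesis of~\eqref{it:N=MbyT} one simply takes $\kappa$ to be the GNS representation of $\psih\restrict_{\Tc_\psi}$, whereupon $\Wc=L^2(\QEu_\psi,F_\psi)\otimes_\kappa\Vc$ is literally $L^2(\QEu_\psi,\psih\restrict_{\QEu_\psi})=L^2(\Afr,\psi)$ and $\sigma_\psi$ is the defining inclusion, so $\Nc_\psi=\Mcal_\psi$ with no further argument. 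You instead keep $\Nc_\psi$ as constructed and show that the GNS representation $x\mapsto Rx$ of $\psih\restrict_{\Tc_\psi}\circ E_\psi$ is faithful, forcing $R=1$; your faithfulness-propagation step through the interior tensor product ($\pi_{E_\psi}$ faithful on the Hilbert module and $\kappa$ faithful on $\Tc_\psi$ together give $\pi_{E_\psi}\otimes1$ faithful) is the standard argument and is correct, though you should record the routine verification that $\oneh\otimes\oneh$ is cyclic, so that $L^2(\Nc_\psi,E_\psi)\otimes_\kappa\Vc$ really is the GNS space of $\psih\restrict_{\Tc_\psi}\circ E_\psi$ rather than merely containing it. For~\eqref{it:T=D} the paper observes that $E_\psi$ and $H_\psi$ are normal maps agreeing on the weakly dense subalgebra $\QEu_\psi$, hence are equal, so their ranges $\Tc_\psi$ and $\Fc_\psi$ coincide; your Kaplansky-density argument is the same density-plus-normality idea, and in fact uses only $H_\psi$ together with $H_\psi(\pi_\psi(\Afr))=F_\psi(\pi_\psi(\Afr))\subseteq\Tc_\psi$ and the $\sigma$-weak closedness of $\Tc_\psi$, which is a small economy. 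Both routes are sound; the paper's is shorter because it reuses the flexibility already built into Proposition~\ref{prop:Npsi}, while yours is more self-contained in that it never invokes the independence of the construction from the choice of $\kappa$.
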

\begin{proof}
If the restriction $\psih\restrict_{\Tc_\psi}$
of $\psih$ to $\Tc_\psi$ has faithful GNS representation, then in the construction found in the proof of Proposition~\ref{prop:Npsi},
we may take $\kappa$ to be the GNS representation of $\psih\restrict_{\Tc_\psi}$.
Thus, $\Vc=L^2(\Tc_\psi,\psih\restrict_{\Tc_\psi})$.
Using~\eqref{eq:W}, we find $\Wc=L^2(\QEu_\psi,\psih\restrict_{\QEu_\psi})$, and, thus, $\Wc$ is naturally identified with $L^2(\Afr,\psi)$,
and the $*$-homomorphism $\sigma_\psi$ is identified with the inclusion of $\QEu_\psi$ into $B(L^2(\Afr,\psi))$ as $\QEu_\psi$ was defined.
Hence, $\Nc_\psi=\Mcal_\psi$.
This proves~\eqref{it:N=MbyT}.

If the restriction of $\psih$ to $\Fc_\psi$ has faithful GNS representation, then by the same argument as above but applied in
the case of the construction described in Proposition~\ref{prop:Ppsi} proves~\eqref{it:P=MbyD}.

If the restrictions of of $\psih$ to $\Fc_\psi$ and to $\Tc_\psi$ both have faithful GNS representations,
then we may apply~\eqref{it:N=MbyT} and~\eqref{it:P=MbyD} to obtain
normal conditional expectations $E_\psi$ and $H_\psi$ as described.
However, these must agree when restriced to $\QEu_\psi$.
Since $\QEu_\psi$ is weakly dense in $\Mcal_\psi$, we have, in fact, $E_\psi=H_\psi$.
Therefore, $\Tc_\psi=\Fc_\psi$.
This proves~\eqref{it:T=D}.
\end{proof}

\begin{examplesub}
In the case of Liu's Example~\ref{ex:Liu}, we have that $\QEu_\psi=\Kt=K(\HEu)+\Cpx1$ is the unitization of the compact operators on
a Hilbert space $\HEu$ with orthonormal basis $(v_n)_{n\ge0}$ and,
letting $(e_{ij})_{i,j\ge0}$ be a corresponding system of matrix units for the compact operators,
we have $\Tc_\psi=\Cpx e_{00}+\Cpx(1-e_{00})$
and $F_\psi(x)=e_{00}xe_{00}+\tau(x)(1-e_{00})$, where $\tau$ is the nonzero character of $\KEut$.
We find $\Wc=\HEu\oplus\Cpx$ and
\[
\sigma_\psi(x)(\xi\oplus z)=x\xi\oplus\tau(x)z\qquad(x\in\QEu_\psi,\,\xi\in\HEu,\,z\in\Cpx),
\]
where $x\xi$ denotes the action of $\QEu_\psi$ on the Hilbert space $\HEu$.
Thus,
\[
B(\HEu)=\Mcal_\psi\subsetneq\Nc_\psi=B(\HEu)\oplus\Cpx.
\]
The projection $R$ in part~\eqref{it:Npsi-4} of Proposition~\ref{prop:Npsi} is $1\oplus0$.
\end{examplesub}

\subsection{The tail $C^*$-algebra}
\label{subsec:tailC*}

Again, we let $\psi$ be a symmetric state on $\Afr$ and we refer to the items at the start of Section~\ref{subsec:normal}
for a summary of the constructions related to the tail algebra $\Tc_\psi$.

\begin{defisub}\label{def:tailC*}
The {\em tail $C^*$-algebra} of $\psi$ is the smallest unital $C^*$-subalgebra $D_\psi$ of $\Tc_\psi$, with the property 
that for every $n\in\Nats$, $x_1,\ldots,x_{n-1}\in D_\psi$, $a_1,\ldots,a_n\in A$ and $i_1,\ldots,i_n\in\Nats$,
we have
\begin{equation}\label{eq:FD}
F_\psi\big(\pi_{i_1}(a_1)x_1\cdots\pi_{i_{n-1}}(a_{n-1})x_{n-1}\pi_{i_n}(a_n)\big)\in D_\psi.
\end{equation}
\end{defisub}

\begin{obssub}\label{obs:condexp}
The restriction of $F_\psi$ to the $C^*$-algebra generated by $\pi_\psi(\Afr)\cup D_\psi$ is a conditional expectation onto $D_\psi$,
and $D_\psi$ is the smallest unital $C^*$-subalgebra of $\Tc_\psi$ having this property.
\end{obssub}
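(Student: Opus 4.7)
The proof splits into two parts matching the two assertions. For the first, the plan is to check that the $*$-algebra $\stalg(\pi_\psi(\Afr) \cup D_\psi)$, which is norm dense in $C^*(\pi_\psi(\Afr) \cup D_\psi)$, is mapped into $D_\psi$ by $F_\psi$. Since $1 \in D_\psi$, any element of this $*$-algebra is a linear combination of monomials of the form
\[
x_0 \pi_{i_1}(a_1) x_1 \pi_{i_2}(a_2) \cdots \pi_{i_n}(a_n) x_n,
\]
with $x_j \in D_\psi$, $a_j \in A$ and $i_j \in \Nats$ (allowing $n = 0$, in which case the monomial lies in $D_\psi$ itself, where $F_\psi$ acts as the identity). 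By Tomiyama's theorem $F_\psi$ is $\Tc_\psi$-bimodular, hence $D_\psi$-bimodular, so
\[
F_\psi\bigl(x_0 \pi_{i_1}(a_1) x_1 \cdots \pi_{i_n}(a_n) x_n\bigr) = x_0 \, F_\psi\bigl(\pi_{i_1}(a_1) x_1 \cdots x_{n-1} \pi_{i_n}(a_n)\bigr) \, x_n.
\]
The inner $F_\psi$-value lies in $D_\psi$ by the very definition of $D_\psi$ (Definition~\ref{def:tailC*}), and multiplying on the left and right by $x_0, x_n \in D_\psi$ keeps it in $D_\psi$. Since $F_\psi$ is norm continuous and $D_\psi$ is norm closed, the restriction of $F_\psi$ to $C^*(\pi_\psi(\Afr) \cup D_\psi)$ takes values in $D_\psi$. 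As $F_\psi$ is a norm-one projection fixing $D_\psi$, this restriction is a conditional expectation onto $D_\psi$.

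For the minimality claim, suppose $D \subseteq \Tc_\psi$ is any unital $C^*$-subalgebra such that the restriction of $F_\psi$ to $C^*(\pi_\psi(\Afr) \cup D)$ is a conditional expectation onto $D$. Then for every $n \in \Nats$, every $x_1, \ldots, x_{n-1} \in D$, every $a_1, \ldots, a_n \in A$ and every $i_1, \ldots, i_n \in \Nats$, the element $\pi_{i_1}(a_1) x_1 \cdots x_{n-1} \pi_{i_n}(a_n)$ belongs to $C^*(\pi_\psi(\Afr) \cup D)$, and hence its $F_\psi$-image lies in $D$. This is precisely the closure property in Definition~\ref{def:tailC*}, so by the minimality of $D_\psi$ among unital $C^*$-subalgebras with that property, $D_\psi \subseteq D$.

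No real obstacle is anticipated here: the content is essentially bookkeeping that combines the $\Tc_\psi$-bimodularity of $F_\psi$ (automatic for any conditional expectation) with the closure property that characterizes $D_\psi$. The only subtlety worth flagging is the need to observe that $C^*(\pi_\psi(\Afr) \cup D) \subseteq \QEu_\psi$, which is clear because $D \subseteq \Tc_\psi$, so that $F_\psi$ is indeed defined on this subalgebra.
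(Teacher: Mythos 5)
Your proof is correct and is precisely the routine verification the authors intended in leaving this as an unproved Observation: Tomiyama bimodularity of the norm-one projection $F_\psi$ over $\Tc_\psi$ (hence over $D_\psi$) reduces the first claim to the defining closure property of $D_\psi$ on a dense $*$-subalgebra, and the minimality claim is read off directly from Definition~\ref{def:tailC*}. The only (harmless) imprecision is that elements of $\stalg(\pi_\psi(\Afr)\cup D_\psi)$ are norm limits, rather than literal finite linear combinations, of the displayed monomials, since $\pi_\psi(\Afr)$ is already a $C^*$-algebra; replacing $\Afr$ by the dense $*$-subalgebra $\Afr_0$ and invoking the norm continuity of $F_\psi$, as you in effect do, settles this.
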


\begin{obssub}\label{obs:Drecurse}
Let $D_{\psi,0}=\Cpx1$ and for $n\ge1$ let $D_{\psi,n}$ be the $C^*$-algebra generated by all expressions of the form appearing
on the left-hand-side of~\eqref{eq:FD} where $x_1,\ldots,x_{n-1}\in D_{\psi,n-1}$.
Then $D_{\psi,n}\subseteq D_{\psi,n+1}$
and $D_\psi$ is the closure of the union $\bigcup_{n=1}^\infty D_{\psi,n}$.
Thus, if $A$ is separable, then so is $D_\psi$.
\end{obssub}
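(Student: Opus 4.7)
The plan is to prove the three assertions in order: the nesting $D_{\psi,n}\subseteq D_{\psi,n+1}$, the equality $D_\psi=\overline{\bigcup_n D_{\psi,n}}$, and finally separability.

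For the nesting, I would first observe that the unit lies in every $D_{\psi,n}$ for $n\ge1$: taking $a_1=1$ in the defining expression gives $F_\psi(\pi_{i_1}(1))=F_\psi(1)=1\in D_{\psi,1}$, and inductively $1\in D_{\psi,n}$ for all $n\ge1$ (while $1\in D_{\psi,0}$ by fiat). To embed a generator
\[
F_\psi\bigl(\pi_{i_1}(a_1)x_1\cdots x_{n-1}\pi_{i_n}(a_n)\bigr)\in D_{\psi,n}
\]
(with $x_j\in D_{\psi,n-1}$) into $D_{\psi,n+1}$, I would simply pad with $x_n=1\in D_{\psi,n}$ and $\pi_{i_{n+1}}(1)=1$; this rewrites the same element as the $D_{\psi,n+1}$-generator obtained by appending $\cdot\,1\cdot1$, yielding $D_{\psi,n}\subseteq D_{\psi,n+1}$ by induction (the base inclusion $D_{\psi,0}=\Cpx 1\subseteq D_{\psi,1}$ was just verified).

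Next I would set $D_*=\overline{\bigcup_{n\ge1}D_{\psi,n}}$ and prove $D_\psi=D_*$ by establishing both containments. For $D_*\subseteq D_\psi$, induction on $n$ shows $D_{\psi,n}\subseteq D_\psi$: $D_{\psi,0}\subseteq D_\psi$ since $D_\psi$ is unital, and if $D_{\psi,n-1}\subseteq D_\psi$ then every generator of $D_{\psi,n}$ has $x_j\in D_\psi$ and hence lies in $D_\psi$ by the defining closure property~\eqref{eq:FD}; taking the norm closure gives $D_*\subseteq D_\psi$. For the reverse, I would verify that $D_*$ itself satisfies property~\eqref{eq:FD}, after which minimality of $D_\psi$ forces $D_\psi\subseteq D_*$. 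Given $x_1,\ldots,x_{n-1}\in D_*$ and $\eps>0$, use density to choose a common $M$ and $x_j'\in D_{\psi,M}$ with $\|x_j-x_j'\|<\eps$; then
\[
F_\psi\bigl(\pi_{i_1}(a_1)x_1'\cdots x_{n-1}'\pi_{i_n}(a_n)\bigr)\in D_{\psi,M+1}\subseteq D_*,
\]
and since $F_\psi$ is contractive and multiplication in $\QEu_\psi$ is jointly norm-continuous, the expression with the true $x_j$'s is a norm limit of such elements; closedness of $D_*$ finishes this step.

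For separability, when $A$ is separable I would show by induction that each $D_{\psi,n}$ is separable: $D_{\psi,0}=\Cpx1$ is trivially separable; assuming $D_{\psi,n-1}$ has a countable dense subset, the generators of $D_{\psi,n}$ are continuous functions (via $F_\psi$ and multiplication) of finitely many parameters ranging over the countable set $\Nats$, over a countable dense subset of $A$, and over a countable dense subset of $D_{\psi,n-1}$, so $D_{\psi,n}$ is generated as a $C^*$-algebra by a countable set and hence is separable. The countable union $\bigcup_n D_{\psi,n}$ is then separable, and so is its closure $D_\psi$.

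The only step with any subtlety is the joint continuity argument in the second paragraph, since the length-$n$ product of approximants has to be controlled uniformly in the norms of the $x_j'$; but these norms stay within $\eps$ of $\|x_j\|$, so a standard telescoping estimate combined with $\|F_\psi\|\le1$ suffices and presents no genuine obstacle.
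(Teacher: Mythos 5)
Your proof is correct, and since the paper states this as an Observation with no proof, what you have written is precisely the routine verification the authors leave implicit: the inductive nesting, the two containments via minimality and the defining closure property (with the standard telescoping/contractivity estimate to pass to limits of the coefficients $x_j$), and the countable-generation argument for separability. No gaps.
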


The following question is all the more interesting in light of Proposition~\ref{prop:extSS}, below.
It is answered affirmatively for quantum symmetric states in Proposition~\ref{prop:QSSfreeprod}.
\begin{quessub}\label{ques:D}
Do we have $D_\psi\subseteq\pi_\psi(\Afr)$ for every symmetric state $\psi$?
\end{quessub}

The following question is answered affirmatively for quantum symmetric states in Theorem~\ref{thm:descr}.
\begin{quessub}
Does $D_\psi$ generate $\Tc_\psi$ as a von Neumann algebra?
\end{quessub}

\subsection{Extreme  symmetric states}
\label{subsec:ex}

\begin{defisub}
An {\em extreme symmetric state} of $\Afr=*_1^\infty A$
is an extreme point of the closed convex set $\SSt(A)$ of symmetric states on $\Afr$.
\end{defisub}

The following lemma is a collection of observations
about a convex combination of symmetric states,
arising from the definitions found in Sections~\ref{subsec:tail} and~\ref{subsec:tailC*}.

\begin{lemmasub}\label{lem:statesum}
Suppose $\psi_1$ and $\psi_2$ be symmetric states on $A$, let $0<t<1$ and let $\psi=t\psi_1+(1-t)\psi_2$.
Then there is an inner-product-preserving embedding
\begin{equation}\label{eq:embL2}
L^2(\Afr,\psi)\hookrightarrow L^2(\Afr,\psi_1)\oplus L^2(\Afr,\psi_2),
\end{equation}
where the right-hand-side is equipped with the norm $\|\eta_1\oplus\eta_2\|^2=\|\eta_1\|^2+\|\eta_2\|^2$,
given by
\[
\ah\mapsto t^{1/2}\ah\oplus(1-t)^{1/2}\ah,
\]
which yields an embedding
\begin{equation}\label{eq:embpiA}
\pi_\psi(\Afr)\hookrightarrow\pi_{\psi_1}(\Afr)\oplus\pi_{\psi_2}(\Afr)
\end{equation}
given by
\begin{equation}\label{eq:embpi}
\pi_\psi(x)\mapsto\pi_{\psi_1}(x)\oplus\pi_{\psi_2}(x).
\end{equation}
This, in turn yields embeddings
\begin{align}
\Mcal_\psi&\hookrightarrow\Mcal_{\psi_1}\oplus\Mcal_{\psi_2} \label{eq:embM} \\
\Fc_\psi&\hookrightarrow\Fc_{\psi_1}\oplus\Fc_{\psi_2} \label{eq:embFc} \\
\REu_\psi&\hookrightarrow\REu_{\psi_1}\oplus\REu_{\psi_2} \label{eq:embR} \displaybreak[1] \\
\Tc_\psi&\hookrightarrow\Tc_{\psi_1}\oplus\Tc_{\psi_2} \label{eq:embT} \\
\QEu_\psi&\hookrightarrow\QEu_{\psi_1}\oplus\QEu_{\psi_2} \label{eq:embQ} \\
D_\psi&\hookrightarrow D_{\psi_1}\oplus D_{\psi_2} \label{eq:embD}
\end{align}
and the identities
\begin{align}
G_\psi&=(G_{\psi_1}\oplus G_{\psi_2})\restrict_{\REu_\psi} \label{eq:embG} \\
F_\psi&=(F_{\psi_1}\oplus F_{\psi_2})\restrict_{\QEu_\psi}. \label{eq:embF}
\end{align}
Letting $\chi_1$ and $\chi_2$ be the vector states on $\Mcal_\psi$ corresponding to the vectors $\oneh\oplus0$ and $0\oplus\oneh$,
respectivly, in $L^2(\Afr,\psi_1)\oplus L^2(\Afr,\psi_2)$, we have, for $j=1,2$,
\begin{gather}
\chi_j\restrict_{\QEu_\psi}=\chi_j\circ F_\psi, \label{eq:chirestrict} \\
\chi_j\circ\pi_\psi=\psi_j \label{eq:chipsi} \\
\psih=t\chi_1+(1-t)\chi_2. \label{eq:psihchi}
\end{gather}
\end{lemmasub}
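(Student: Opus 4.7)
The plan is to verify the embeddings in layers, from Hilbert spaces upward through the representations, $C^*$-algebras, and von Neumann algebras, and then dispose of the conditional expectation and state identities.

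First, I would check \eqref{eq:embL2} directly. The map $V\colon\hat a\mapsto t^{1/2}\hat a\oplus(1-t)^{1/2}\hat a$ is well defined because $\psi(a^*a)=0$ forces $\psi_j(a^*a)=0$ for both $j$ (from non-negativity in the positively-weighted decomposition), and it preserves inner products by a one-line computation using $\psi=t\psi_1+(1-t)\psi_2$. A direct check then shows that the image $K=V(L^2(\Afr,\psi))$ is invariant under the diagonal action $\pi_{\psi_1}(x)\oplus\pi_{\psi_2}(x)$ and that $V\pi_\psi(x)=(\pi_{\psi_1}(x)\oplus\pi_{\psi_2}(x))V$, which gives \eqref{eq:embpiA} via \eqref{eq:embpi}. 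Well-definedness and injectivity of this $*$-homomorphism both follow from the equivalence $\pi_\psi(x)=0\iff\pi_{\psi_1}(x)=\pi_{\psi_2}(x)=0$, itself immediate from $\psi(a^*x^*xa)=t\psi_1(a^*x^*xa)+(1-t)\psi_2(a^*x^*xa)$ and positivity.

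For the von Neumann algebra embedding \eqref{eq:embM}, I would identify $\Mcal_\psi$ with the weak closure in $\Mcal_{\psi_1}\oplus\Mcal_{\psi_2}$ of the image of $\pi_\psi(\Afr)$ under \eqref{eq:embpi}. The projection $p=VV^*$ lies in the commutant of this image and hence of its weak closure; compression by $p$ identifies that closure, on $K$, with $\Mcal_\psi$ via $V$, since on generators $V\pi_\psi(x)V^*=p(\pi_{\psi_1}(x)\oplus\pi_{\psi_2}(x))$. The embeddings \eqref{eq:embFc} and \eqref{eq:embT} then follow because the $S_\infty$-actions on the three GNS representations are intertwined by the direct-sum construction, so fixed points embed into fixed points and the tail intersections $\bigcap_N W^*(\bigcup_{i\geq N}\pi_\psi\circ\lambda_i(A))$ embed coordinate-wise. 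The $C^*$-algebra embeddings \eqref{eq:embR} and \eqref{eq:embQ} are then immediate from $C^*$-generation, and \eqref{eq:embD} follows by induction on the recursive description in Observation~\ref{obs:Drecurse}, once \eqref{eq:embF} is in hand.

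For the identities \eqref{eq:embG} and \eqref{eq:embF}, I would use the WOT-limit characterization \eqref{eq:WOTlim} of Theorem~\ref{thm:E}. The shift $\alpha$ on $\REu_\psi$ and its counterparts on $\REu_{\psi_1}$, $\REu_{\psi_2}$ are all induced by the single shift $\lambda_i\mapsto\lambda_{i+1}$ on $\Afr$; hence the embedding intertwines them and the WOT-limits match coordinate by coordinate, giving \eqref{eq:embG} and, upon restriction, \eqref{eq:embF}. The state identities then come cheaply: under $V$, $\hat 1_\psi$ corresponds to $t^{1/2}\hat 1_{\psi_1}\oplus(1-t)^{1/2}\hat 1_{\psi_2}$, so \eqref{eq:chipsi} and \eqref{eq:psihchi} reduce to block-diagonal computations; for \eqref{eq:chirestrict}, if $x\in\QEu_\psi$ has image $x_1\oplus x_2$, then Theorem~\ref{thm:E}\eqref{thm:E-1} applied to $\psi_j$ combined with \eqref{eq:embF} yields
\[
\chi_j(x)=\psih_j(x_j)=\psih_j(F_{\psi_j}(x_j))=\chi_j(F_\psi(x)).
\]
The main obstacle is making \eqref{eq:embM} a genuine injection without assuming faithfulness of $\psih_j$ on $\Mcal_{\psi_j}$: one must verify that compression by $p$ of the weak closure of the diagonal image is an isomorphism onto $\Mcal_\psi$ rather than merely a surjection. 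This reduces to separating-vector considerations for $t^{1/2}\hat 1_{\psi_1}\oplus(1-t)^{1/2}\hat 1_{\psi_2}$ in the relevant subalgebra, which can be handled either directly or by passing through the algebras $\Nc_\psi$, $\Nc_{\psi_j}$ of Proposition~\ref{prop:Npsi}, where normality of the associated conditional expectations provides enough separating vectors. All remaining verifications are routine once this point is settled.
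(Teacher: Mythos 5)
Your proposal follows essentially the same route as the paper's own (very terse) proof: build the isometry $V$ on the GNS spaces, check that it intertwines the representations, the $S_\infty$-actions and the shift endomorphisms, and deduce the remaining embeddings and identities from the WOT-limit formula \eqref{eq:WOTlim} and Theorem~\ref{thm:E}\eqref{thm:E-1}. Every step you actually carry out is correct, including the kernel computation $\ker\pi_\psi=\ker\pi_{\psi_1}\cap\ker\pi_{\psi_2}$ for \eqref{eq:embpiA} and the inductive argument for \eqref{eq:embD} via Observation~\ref{obs:Drecurse}.

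The one point you leave open, injectivity of \eqref{eq:embM}, is indeed the only non-routine step; the paper declares it ``clear,'' so you have been more careful than the source. It can be closed directly, with no detour through $\Nc_\psi$ (which is not the right tool here: the issue is not the existence of a normal conditional expectation but the faithfulness of a compression). Let $N$ be the von Neumann algebra generated by $\{\pi_{\psi_1}(x)\oplus\pi_{\psi_2}(x):x\in\Afr\}$, let $K=V(L^2(\Afr,\psi))$, and let $\Phi(T)=V^*TV$. As you note, $\Phi$ is a normal, unital, surjective $*$-homomorphism from $N$ onto $\Mcal_\psi$ (surjectivity and the containment $\Phi(N)\subseteq\Mcal_\psi$ both follow from Kaplansky density and ultraweak continuity), so $\ker\Phi=N(1-z)$ for a central projection $z$ of $N$. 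Write $1-z=r_1\oplus r_2$ with $r_j\in\Mcal_{\psi_j}$; centrality of $1-z$ in $N$ forces $r_j$ to commute with every $\pi_{\psi_j}(x)$, so $r_j\in\Mcal_{\psi_j}\cap\pi_{\psi_j}(\Afr)'=Z(\Mcal_{\psi_j})$. Since $\psih\circ\Phi$ is the vector state of $V\oneh=t^{1/2}\oneh\oplus(1-t)^{1/2}\oneh$, we get
\[
0=\psih\bigl(\Phi(1-z)\bigr)=t\,\|r_1\oneh\|_2^2+(1-t)\,\|r_2\oneh\|_2^2,
\]
so $r_j\oneh=0$ in $L^2(\Afr,\psi_j)$. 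But $\oneh$ is cyclic for $\Mcal_{\psi_j}$, hence separating for $\Mcal_{\psi_j}'\supseteq Z(\Mcal_{\psi_j})\ni r_j$, whence $r_j=0$, $z=1$, and $\Phi$ is an isomorphism. Its inverse is the embedding \eqref{eq:embM}, and with that settled the rest of your argument goes through as written.
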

\begin{proof}
The exposition in the lemma pretty much spells out how it goes.
Only some minor comments are in order.
The embeddings~\eqref{eq:embL2}, \eqref{eq:embpiA} and~\eqref{eq:embM} are clear.
The unitary $U_\sigma$ on the Hilbert space $L^2(\Afr,\psi)$ corresponding to $\sigma\in S_\infty$
is the restriction to the embedded copy as in~\eqref{eq:embL2} of the direct sum $U_\sigma^{(1)}\oplus U_\sigma^{(2)}$ of the anaolgous unitaries
on $L^2(\Afr,\psi_1)$ and $L^2(\Afr,\psi_2)$.
Thus, the automorphism $\beta_\sigma=\Ad U_\sigma$ of $\Mcal_\psi$
is the restriction of the direct sum
\begin{equation}\label{eq:embbeta}
\beta_\sigma=(\beta_\sigma^{(1)}\oplus\beta_\sigma^{(2)})\restrict_{\Mcal_\psi}
\end{equation}
of the corresponding automorphisms of $\Mcal_{\psi_1}$ and $\Mcal_{\psi_2}$.
From this we see~\eqref{eq:embFc} and~\eqref{eq:embR}.

The embedding~\eqref{eq:embT} follows by noting
\[
\Tc_\psi=\bigcap_{N\ge1}W^*(\bigcup_{j\ge N}\pi_\psi(\lambda_j(A)))
\subseteq\bigcap_{N\ge1}W^*(\bigcup_{j\ge N}\pi_{\psi_1}(\lambda_j(A))\oplus\pi_{\psi_2}(\lambda_j(A)))=\Tc_{\psi_1}\oplus\Tc_{\psi_2}.
\]
Now~\eqref{eq:embQ} follows from~\eqref{eq:embT} and~\eqref{eq:embpi}.

To prove~\eqref{eq:embG},
note that~\eqref{eq:embbeta} implies that the shift endomorphism $\alpha$ of $\REu_\psi$ is the restriction of the direct sum $\alpha^{(1)}\oplus\alpha^{(2)}$
of corresponding shifts.
Taking the weak limit of powers of this shift yields~\eqref{eq:embG}, and~\eqref{eq:embF} follows by restriction.

The vector state for $\oneh\oplus0$ when restricted to $\Mcal_{\psi_1}\oplus 0$, yields the state $\psih_1$ which, by virtue of Theorem~\ref{thm:E},
satisfies $\psih_1\restrict_{\QEu_{\psi_1}}=\psih_1\circ F_{\psi_1}$, and similarly for $0\oplus\oneh$, $\psih_2$ and $F_{\psi_2}$.
From these facts,~\eqref{eq:chirestrict} and~\eqref{eq:chipsi} follow.
Since $\psih$ is the vector state for the vector $\oneh\in L^2(\Afr,\psi)$,~\eqref{eq:psihchi} follows as well.
\end{proof}

\begin{propsub}\label{prop:extSS}
Let $\psi\in\SSt(A)$.
Let $\rho$ denote the restriction of $\psih$ to the tail $C^*$-algebra $D_\psi$.
Then
\begin{equation}\label{eq:psirho}
\psi=\rho\circ F_\psi\circ\pi_\psi.
\end{equation}
Moreover, 
\begin{enumerate}[(i)]
\item\label{it:pure=>extrSS} if $\rho$ is a pure state of $D_\psi$, then $\psi$ is an extreme point of $\SSt(A)$,
\item\label{it:extrSS=>pure} if $\psi$ is an extreme point of $\SSt(A)$ and if $D_\psi\subseteq\pi_\psi(\Afr)$, then $\rho$ is a pure state of $D_\psi$.
\end{enumerate}
\end{propsub}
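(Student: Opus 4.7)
The plan is to prove the three claims in sequence, using Observation~\ref{obs:Drecurse}, Theorem~\ref{thm:E}, and the structural Lemma~\ref{lem:statesum} as the principal tools.

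First, to establish $\psi=\rho\circ F_\psi\circ\pi_\psi$, I will show $F_\psi(\pi_\psi(\Afr))\subseteq D_\psi$. Taking $x_1=\cdots=x_{n-1}=1\in D_\psi$ in the defining property~\eqref{eq:FD} gives $F_\psi(\pi_{i_1}(a_1)\cdots\pi_{i_n}(a_n))\in D_\psi$ for every monomial, and linearity plus contractivity of $F_\psi$ extends this to all of $\pi_\psi(\Afr)$. Since Theorem~\ref{thm:E}\eqref{thm:E-1} gives $\psih\restrict_{\QEu_\psi}=\psih\restrict_{\Tc_\psi}\circ F_\psi$, we obtain $\psi=\psih\circ\pi_\psi=\psih\circ F_\psi\circ\pi_\psi=\rho\circ F_\psi\circ\pi_\psi$.

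For part~\eqref{it:pure=>extrSS}, assume $\rho$ is pure and suppose $\psi=t\psi_1+(1-t)\psi_2$ with $\psi_1,\psi_2\in\SSt(A)$ and $0<t<1$. I will invoke Lemma~\ref{lem:statesum} to get the embedding $D_\psi\hookrightarrow D_{\psi_1}\oplus D_{\psi_2}$ and vector states $\chi_1,\chi_2$ on $\Mcal_\psi$ satisfying $\psih=t\chi_1+(1-t)\chi_2$ and $\chi_j\circ\pi_\psi=\psi_j$. Restricting to $D_\psi$ writes $\rho$ as a convex combination of two states of $D_\psi$, so purity of $\rho$ forces $\chi_j\restrict_{D_\psi}=\rho$ for each $j$. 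Combining $\chi_j\restrict_{\QEu_\psi}=\chi_j\circ F_\psi$ from~\eqref{eq:chirestrict} with $F_\psi\circ\pi_\psi$ landing in $D_\psi$, we conclude $\psi_j(a)=\chi_j(F_\psi(\pi_\psi(a)))=\rho(F_\psi(\pi_\psi(a)))=\psi(a)$, establishing extremality.

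For part~\eqref{it:extrSS=>pure}, assume $\psi$ is extreme and $D_\psi\subseteq\pi_\psi(\Afr)$. Given a decomposition $\rho=t\rho_1+(1-t)\rho_2$ into states of $D_\psi$, define $\psi_j:=\rho_j\circ F_\psi\circ\pi_\psi$ on $\Afr$. The crucial step is checking each $\psi_j$ is symmetric: combining $\pi_\psi\circ\beta_\sigma=\betah_\sigma\circ\pi_\psi$ with the invariance $G_\psi\circ\betah_\sigma\restrict_{\REu_\psi}=G_\psi$ from Theorem~\ref{thm:E}\eqref{thm:E-3} yields $F_\psi(\pi_\psi(\beta_\sigma(a)))=F_\psi(\pi_\psi(a))$, so $\psi_j\circ\beta_\sigma=\psi_j$. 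Extremality of $\psi$ then forces $\psi_1=\psi_2$. To transfer this back to $D_\psi$, for any $d\in D_\psi\subseteq\pi_\psi(\Afr)$ pick $a\in\Afr$ with $\pi_\psi(a)=d$; since $d\in\Tc_\psi$ is fixed by $F_\psi$, we get $\rho_j(d)=\rho_j(F_\psi(\pi_\psi(a)))=\psi_j(a)$, so equality of $\psi_1$ and $\psi_2$ forces $\rho_1=\rho_2=\rho$, proving $\rho$ pure.

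The main obstacle I anticipate is in part~\eqref{it:extrSS=>pure}: one must verify that the candidate states $\psi_j$ built from a decomposition of $\rho$ are genuinely symmetric (which relies on the invariance property of $G_\psi$ under $\betah_\sigma$), and the hypothesis $D_\psi\subseteq\pi_\psi(\Afr)$ is precisely what is needed to lift the equality $\psi_1=\psi_2$ on $\Afr$ back to equality $\rho_1=\rho_2$ on $D_\psi$. Without that hypothesis, $\rho_j$ is only pinned down on the image $F_\psi(\pi_\psi(\Afr))$, which a priori may be strictly smaller than $D_\psi$, explaining why Question~\ref{ques:D} is the natural bottleneck for a full converse.
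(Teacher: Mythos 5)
Your proof is correct and follows essentially the same route as the paper: both parts rest on Lemma~\ref{lem:statesum} (for the states $\chi_j$ and the identities \eqref{eq:chirestrict}--\eqref{eq:psihchi}) in one direction and on the exchangeability of $F_\psi$ (Theorem~\ref{thm:E}\eqref{thm:E-3}, equivalently Corollary~\ref{cor:Fexch}) plus the hypothesis $D_\psi\subseteq\pi_\psi(\Afr)$ in the other, with your arguments being the direct forms of the paper's contrapositive ones. Your explicit verification that $F_\psi(\pi_\psi(\Afr))\subseteq D_\psi$ (by taking $x_1=\cdots=x_{n-1}=1$ in \eqref{eq:FD}) is a detail the paper leaves implicit, and your closing remark correctly identifies why the hypothesis $D_\psi\subseteq\pi_\psi(\Afr)$ is needed.
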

\begin{proof}
The identity~\eqref{eq:psirho} is apparent from the fact (Theorem~\ref{thm:E}\eqref{thm:E-1}) that $F_\psi$ is $\psih$-preserving.

Suppose $\psi$ is not an extreme point of $\SSt(A)$.
We may write $\psi=\frac12(\psi_1+\psi_2)$ for $\psi_1,\psi_2\in\SSt(A)$, $\psi_1\ne\psi_2$.
Let $\chi_1$ and $\chi_2$ be the states on $\Mcal_\psi$ defined in Lemma~\ref{lem:statesum} (with $t=\frac12$),
and let $\rho_j$ be the restriction of $\chi_j$ to $D_\psi$.
By~\eqref{eq:psihchi}, we have $\rho=\frac12(\rho_1+\rho_2)$.
By~\eqref{eq:chipsi}, we must have $\chi_1\ne\chi_2$.
But by~\eqref{eq:chirestrict}, this implies $\rho_1\ne\rho_2$.
Thus, $\rho$ is not a pure state of $D_\psi$, and~\eqref{it:pure=>extrSS} is proved.

For~\eqref{it:extrSS=>pure},
suppose $\rho$ is not a pure state.
Then $\rho=\frac12(\rho_1+\rho_2)$ for states $\rho_1$ and $\rho_2$ of $D_\psi$ that are not equal to each other.
Letting $\psi_j=\rho_j\circ F_\psi\circ\pi_\psi$, from~\eqref{eq:psirho}, we have $\psi=\frac12(\psi_1+\psi_2)$.
By the exchangeability for $F_\psi$ (Corollary~\ref{cor:Fexch}), we have $\psi_j\in\SSt(A)$.
Since $D_\psi\subseteq\pi_\psi(\Afr)$, $\rho_j$ can be recovered from $\psi_j$ by restriction, and we must have $\psi_1\ne\psi_2$.
Thus, $\psi$ is not an extreme point of $\SSt(A)$.
\end{proof}

\section{Freeness with amalgamation over the tail algebra}
\label{sec:freeness}

In this section, we show that given a quantum symmetric state, we have freeness
with respect to the conditional expectations $F_\psi$ and $E_\psi$, constructed in the last section,
i.e., freeness with amalgamation over the tail algebra.

We assume $\psi$ is a quantum symmetric state on $\Afr$ and we
continue using the notation of the previous section.
Let
\begin{gather*}
\Bc_i=C^*(\pi_i(A)\cup\Tc_\psi)\subseteq\Mcal_\psi, \\
\Cc_i=W^*(\sigma_\psi(\Bc_i))\subseteq\Nc_\psi.
\end{gather*}

\begin{thm}\label{thm:free}
If $\psi$ is a quantum symmetric state on $\Afr$, then $(\Bc_i)_{i=1}^\infty$ is free
with respect to $F_\psi$ and $(\Cc_i)_{i=1}^\infty$ is free with respect to $E_\psi$.
\end{thm}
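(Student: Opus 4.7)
The plan is to follow closely the strategy of K\"ostler--Speicher~\cite{KSp09}, using the quantum exchangeability hypothesis on $\psi$ together with the existence and properties of $F_\psi$ established in Theorem~\ref{thm:E} to verify the definition of freeness with amalgamation. First I would reduce the freeness of $(\Bc_i)$ over $\Tc_\psi$ with respect to $F_\psi$ to showing that, for any $n$, any $i(1),\ldots,i(n)\in\Nats$ with $i(j)\ne i(j+1)$, and any $b_j\in\Bc_{i(j)}$ with $F_\psi(b_j)=0$, one has $F_\psi(b_1\cdots b_n)=0$. By norm-density of the $*$-algebra generated by $\pi_{i(j)}(A)\cup\Tc_\psi$ in $\Bc_{i(j)}$ and norm-continuity of $F_\psi$, it suffices to consider each $b_j$ as a single alternating monomial in $\pi_{i(j)}(A)$ and $\Tc_\psi$.

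Next I would exploit the WOT formula $F_\psi(x)=\lim_m\alpha^m(x)$ from Theorem~\ref{thm:E} together with the fact that $\alpha$ fixes $\Tc_\psi$ pointwise. Choosing $k$ large enough so that $i(1),\ldots,i(n)\in\{1,\ldots,k\}$ and applying the shift a large number of times (to push the $\pi_{i(j)}(A)$-indices arbitrarily far out, while leaving $\Tc_\psi$-factors intact), I would convert the vanishing to an assertion about $\psih$ applied to a monomial of the form $\pi_\psi(\lambda_{j(1)}(a_1))\cdots\pi_\psi(\lambda_{j(n')}(a_{n'}))$ with certain $\Tc_\psi$-insertions. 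Then I would invoke the quantum exchangeability identity~\eqref{eq:phipi}, perform the Speicher cumulant expansion in $D=\Tc_\psi$ as was done in the proof of Proposition~\ref{prop:freeqsymm}, and use the relations $u_{ij}u_{ij'}=\delta_{jj'}u_{ij}$ and $u_{ij}u_{i'j}=\delta_{ii'}u_{ij}$ satisfied in $A_s(k)$ to constrain the surviving noncrossing partitions to those compatible with $\ker i$. The centering hypothesis $F_\psi(b_j)=0$ together with the alternation $i(j)\ne i(j+1)$ would then force every surviving cumulant to vanish, yielding $F_\psi(b_1\cdots b_n)=0$.

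Once this is established, the freeness of $(\Cc_i)$ with respect to $E_\psi$ follows by promoting to the von Neumann algebra setting: by Proposition~\ref{prop:Npsi}\eqref{it:Npsi-3} the extension $E_\psi:\Nc_\psi\to\Tc_\psi$ is normal and has faithful GNS representation, and each $\Cc_i$ is the WOT-closure of $\sigma_\psi(\Bc_i)$. Using normality of $E_\psi$ together with joint WOT-continuity of multiplication on norm-bounded sets and Kaplansky density to approximate elements of $\Cc_i$ by bounded nets in $\sigma_\psi(\Bc_i)$ (kept centered by composing with $\id-E_\psi$), the vanishing of alternating centered products transfers from the $C^*$-level to the $W^*$-level.

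The main obstacle I anticipate is in the central step: quantum exchangeability is formulated for pure monomials in the $\pi_i(A)$, with no $\Tc_\psi$-insertions, so one must verify that the $\Tc_\psi$-factors behave transparently as $D$-valued weights inside Speicher's operator-valued cumulants. This requires leveraging the fact that $F_\psi$ restricts to the identity on $\Tc_\psi$ and is exchangeable in the strong sense of Corollary~\ref{cor:Fexch}, so that $\Tc_\psi$-insertions can be absorbed into the cumulants without disturbing the shift-invariance or the co-action computation; keeping this bookkeeping honest in the non-faithful setting is the delicate point.
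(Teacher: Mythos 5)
Your overall architecture is right in several places: the reduction to alternating centered monomials, the recognition that quantum exchangeability must first be extended to words with $\Tc_\psi$-insertions (the paper does this in Lemma~\ref{techlemma} via Kaplansky density and the quotient $A_s(k+\ell)\to A_s(k)$, exactly the ``delicate point'' you flag), the need for an exchangeability identity at the level of $F_\psi$ itself rather than $\psih$ (Proposition~\ref{expProp}), and the passage from $F_\psi$ to $E_\psi$ by normality at the end. But the central step of your argument has a genuine gap, and as written it is circular. You propose to ``perform the Speicher cumulant expansion in $D=\Tc_\psi$ as was done in the proof of Proposition~\ref{prop:freeqsymm}'' and to conclude that the surviving noncrossing partitions are constrained to those compatible with $\ker\mathbf{i}$, whence the centering kills everything. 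In Proposition~\ref{prop:freeqsymm}, however, the statement that $\kappa_\sigma^{E}[\pi_{i(1)}(a_1),\ldots,\pi_{i(n)}(a_n)]$ vanishes unless $\ker\mathbf{j}\ge\sigma$ is a \emph{consequence of the freeness hypothesis} (vanishing of mixed cumulants); it is precisely the statement you are trying to prove here. The relations $u_{ij}u_{ij'}=\delta_{jj'}u_{ij}$ only annihilate summands with adjacent equal column indices; they do not by themselves force $\sigma\le\ker\mathbf{i}$ on the surviving cumulants, and no amount of purely algebraic manipulation of the generators of $A_s(k)$ accomplishes this.

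What is missing is the actual engine of the K\"ostler--Speicher converse direction, which the paper supplies as follows: after establishing the singleton pull-out property (Proposition~\ref{ExpProp}, which handles the base of the induction and any $\ker\mathbf{i}$ with a singleton block), one runs a \emph{downward induction on the number $r$ of blocks of} $\ker\mathbf{i}$, and for the inductive step one applies to the identity of Proposition~\ref{expProp} a concrete finite-dimensional $*$-representation of $A_s(2r)$, namely $\ut=\bigoplus_{m=1}^r\left(\begin{smallmatrix}q_m&1-q_m\\1-q_m&q_m\end{smallmatrix}\right)$ with two of the projections $q_\ell,q_m$ chosen noncommuting. This representation forces $\ker\mathbf{j}\le\ker\mathbf{i}$ among the surviving terms, reduces the identity to $F_\psi[x_1^{i(1)}\cdots x_n^{i(n)}]=c\cdot F_\psi[x_1^{i(1)}\cdots x_n^{i(n)}]$ with $c$ one of the sums appearing in Lemma~\ref{SpeichLemma}, and that lemma shows $c\ne1$ precisely because $q_\ell$ and $q_m$ do not commute, forcing the moment to vanish. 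Without this representation-theoretic step (or a substitute such as the Weingarten-calculus argument, which you do not invoke), the deduction of freeness from quantum exchangeability does not close.
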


The proof of this theorem follows in part Sections~4 and~5 of~\cite{KSp09},
and requires several intermediate results, which are given below.
Again, we let $\pi_i=\pi_\psi\circ\lambda_i$.

\begin{lemma}\label{techlemma}
Assume that $b_{1} , b_{2} , \ldots, b_{n+1} \in \mathcal{T}_{\psi}$.
Then, for $1 \leq i(1) , \ldots , i(n) \leq k$ and $a_{1}, a_{2} , \ldots , a_{n} \in A$,
we have
\begin{multline}\label{eq:bpi}
\psih( b_{1} \pi_{i(1)}(a_{1})b_{2}  \cdots \pi_{i(n)}(a_{n})b_{n+1}) \\
=\sum_{j(1) , \ldots, j(n) = 1}^{k} u_{i(1),j(1)} \cdots u_{i(n),j(n)} \cdot \psih(b_{1} \pi_{j(1)}(a_{1})b_{2}  \cdots \pi_{j(n)}(a_{n})b_{n+1}).
 \end{multline}
 \end{lemma}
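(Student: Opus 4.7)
The plan is to reduce~\eqref{eq:bpi} to the quantum exchangeability identity applied with $A_s(M)$ for $M$ sufficiently large, and then pull the resulting identity back to $A_s(k)$ via a suitable quotient $*$-homomorphism.

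First, by the definition of the tail algebra, each $b_\ell$ lies in $W^*(\bigcup_{j>k}\pi_j(A))$. By Kaplansky's density theorem, I can approximate each $b_\ell$ in strong-operator topology, with norm bounded by $\|b_\ell\|$, by elements of $\stalg(\bigcup_{j>k}\pi_j(A))$. Both sides of~\eqref{eq:bpi} are separately continuous in each $b_\ell$ with respect to strong-operator convergence on norm-bounded sets: the left-hand side is a vector-state evaluation of a product of bounded operators, and the right-hand side is a finite sum of such evaluations multiplied by fixed elements of $A_s(k)$. Hence, by approximating the $b_\ell$'s one at a time and invoking multilinearity, it suffices to prove~\eqref{eq:bpi} when each $b_\ell$ is a monomial $\pi_{r^\ell_1}(c^\ell_1)\cdots\pi_{r^\ell_{m_\ell}}(c^\ell_{m_\ell})$ with every $r^\ell_s>k$.

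For such $b_\ell$'s, choose $M>k$ large enough that all indices occurring in any $b_\ell$ lie in $\{k+1,\ldots,M\}$. View $b_1\pi_{i(1)}(a_1)\cdots\pi_{i(n)}(a_n)b_{n+1}$ as a single monomial $\pi_{r_1}(d_1)\cdots\pi_{r_N}(d_N)$ with each $r_\ell\in\{1,\ldots,M\}$, and apply the quantum exchangeability of $(\pi_j)_{j=1}^\infty$ with $k$ replaced by $M$. Writing $v_{rs}$ for the generators of $A_s(M)$, this yields
\[
\psih\bigl(b_1\pi_{i(1)}(a_1)\cdots b_{n+1}\bigr)\cdot 1_{A_s(M)}
= \sum_{s_1,\ldots,s_N=1}^M v_{r_1,s_1}\cdots v_{r_N,s_N}\cdot\psih\bigl(\pi_{s_1}(d_1)\cdots\pi_{s_N}(d_N)\bigr).
\]

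Finally, I would apply the unital $*$-homomorphism $\theta:A_s(M)\to A_s(k)$ defined on generators by $\theta(v_{ij})=u_{ij}$ when $i,j\le k$ and $\theta(v_{ij})=\delta_{ij}\cdot 1$ otherwise; its existence follows by checking that the projection, self-adjointness, row-sum, and column-sum relations are preserved, which in each row or column reduces to $\sum_{i=1}^k u_{ij}=1$ (and its column analogue) combined with Kronecker contributions from indices $>k$. Under $\theta\otimes\id$, the factor $\theta(v_{r_\ell,s_\ell})$ vanishes whenever $r_\ell>k$ and $s_\ell\ne r_\ell$, or $r_\ell\le k$ and $s_\ell>k$; so the only surviving summands are those with $s_\ell=r_\ell$ at positions inside a $b_\ell$ block (each contributing the unit of $A_s(k)$) and $s_\ell\in\{1,\ldots,k\}$ at the $n$ positions corresponding to the $\pi_{i(\ell')}(a_{\ell'})$ factors. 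Collecting terms gives precisely the right-hand side of~\eqref{eq:bpi}. The main obstacle is the bookkeeping---setting up $\theta$ correctly and tracking which monomials survive---while the approximation step is routine but must be handled carefully since the target identity lives in $A_s(k)$ rather than in $\Cpx$.
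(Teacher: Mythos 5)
Your proposal is correct and takes essentially the same route as the paper's proof: a Kaplansky density reduction to the case where each $b_\ell$ is a monomial in the $\pi_j(A)$ with $j>k$, followed by an application of quantum exchangeability at level $k+\ell$ (your $M$) and the quotient $*$-homomorphism $A_s(k+\ell)\to A_s(k)$ determined by the block matrix $U_k\oplus 1_\ell$. The only cosmetic difference is that the paper pushes the homomorphism through the deformed representations first (so each $b$-block immediately becomes $1\otimes c_i$), whereas you expand the full sum at level $M$ and then track which summands survive under $\theta\otimes\id$; the bookkeeping comes out the same.
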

\begin{proof}
By Kaplansky's density theorem, each $b_i$ is the limit in strong operator topology of a bounded sequence in the algebra
generated by $\bigcup_{p>k}\pi_p(A)$.
Thus, it will suffice to prove~\eqref{eq:bpi} in the case where each $b_i$ is of the form
\[
b_{i} = \pi_{p(i,1)}(a_{i,1}) \cdots \pi_{p(i,m(i))}(a_{i,m(i)}),
\]
with $m(i)\in\Nats$, $a_{i,j}\in A$ and $k+1 \leq p(i,j) \leq k + \ell$ for all $j = 1,\ldots , m(i)$, for some $\ell>0$.
Showing~\eqref{eq:bpi} in this case amount to showing
\begin{multline}\label{eq:psic}
\psi(c_{1} \lambda_{i(1)}(a_{1})c_{2}  \cdots \lambda_{i(n)}(a_{n})c_{n+1}) \\
=\sum_{j(1),\ldots,j(n) = 1}^{k} u_{i(1),j(1)} \cdots u_{i(n),j(n)}
   \cdot \psi(c_{1} \lambda_{j(1)}(a_{1})c_{2}  \cdots \lambda_{j(n)}(a_{n})c_{n+1})
\end{multline}
where
\begin{equation}\label{eq:ci}
c_i = \lambda_{p(i,1)}(a_{i,1}) \cdots \lambda_{p(i,m(i))}(a_{i,m(i)}).
\end{equation}

Consider the matrix $U_{k} = (u_{ij})_{i,j = 1}^{k}$ for the defining generators $u_{ij}$ of the quantum permutation group $A_{s}(k)$.
Consider the $(k+\ell)\times(k+\ell)$
matrix $\tilde{U}_{k+\ell} := U_{k} \oplus 1_{\ell} = (\tilde{u}_{ij})_{i,j = 1}^{k + \ell}$ where $\tilde{u}_{ij} = u_{ij}$ for $i,j \leq k$ and
$\tilde{u}_{ij} = \delta_{ij}1$ when either $i > k$ or $j > k$.  The entries of $\tilde{U}_{k + \ell}$ satisfy
the defining relations of $A_{s}(k + \ell)$ and yield a corresponding $*$-homomorphism from $A_s(k+\ell)$ onto $A_s(k)$.
We now use the quantum exchangeability of the representations $\lambda_1,\lambda_2,\ldots$ with respect to $\psi$ and compose
with the $*$-homomorphism $A_s(k+\ell)\to A_2(k)$ described above to obtain that
the list $\lambdat_1,\ldots,\lambdat_{k+\ell}$ has the same distribution with respect to $\id\otimes\psi$ on $A_s(k)\otimes\Afr$
as does $\lambda_1,\ldots,\lambda_{k+\ell}$ with respect to $\psi$ on $\Afr$, where
\[
\lambdat_i=\sum_{j=1}^{k+\ell}\ut_{i,j}\otimes\lambda_j:A\to A_s(k)\otimes\Bfr.
\]
But $\ut_{i,j}=\delta_{i,j}1$ if either $i>k$ or $j>k$, so
\[
\lambdat_i=\begin{cases}
1\otimes\lambda_i,&i>k \\
\sum_{j=1}^k u_{i,j}\otimes\lambda_j,&i\le k.
\end{cases}
\]
Thus, when, in the expression~\eqref{eq:ci} for $c_i$, the $\lambda_{p(i,j)}$ are replaced by $\lambdat_{p(i,j)}$, we get $1\otimes c_i$.
So applying the aforementioned equidistribution to the expression
\[
c_{1} \lambda_{i(1)}(a_{1})c_{2}  \cdots \lambda_{i(n)}(a_{n})c_{n+1},
\]
we get~\eqref{eq:psic}.
\end{proof}

We now fix notation for the remainder of the section.  Let $a_{1} , \ldots , a_{N} \in A$ and consider the algebra
$\mathcal{T}_{\psi} \langle X_{1} , \ldots , X_{N} \rangle$ of noncommutative polynomials, in noncommuting indeterminants
$X_1,\ldots,X_N$ that also do not commute with the coefficient algebra $\Tc_\psi$.
Take $P_{j}(X_{1} , \ldots , X_{N}) \in \mathcal{T}_{\psi} \langle X_{1} , \ldots , X_{N} \rangle$ for $j = 1 ,\ldots , n$,
and for $i \in \mathbb{N}$ define 
\begin{equation}\label{eq:xP}
x_{j}^{i} := P_{j}(\pi_{i}(a_{1}), \ldots , \pi_{i}(a_{N})).
\end{equation}

As an immediate consequence of the exchangeability of $\psih$ (Lemma~\ref{lem:R0exch}),
for any permutation $\sigma\in S_\infty$, we have
\begin{equation}\label{eq:psiexch}
\psih(x_1^{i(1)}\cdots x_n^{i(n)})=\psih(x_1^{\sigma(i(1))}\cdots x_n^{\sigma(i(n))}).
\end{equation}
The next result shows quantum exchangeability.
\begin{lemma}\label{poly_corr}
For natural numbers  $1 \leq i(1), \ldots , i(n) \leq k$,
we have  
\begin{equation}\label{eq:ncpolys}
\psih(x_{1}^{i(1)} \cdots x_{n}^{i(n)}) =  \sum_{j(1), \ldots , j(n) = 1}^{k} u_{i(1),j(1)} \cdots u_{i(n),j(n)} \psih(x_{1}^{j(1)} \cdots x_{n}^{j(n)}).
\end{equation}
\end{lemma}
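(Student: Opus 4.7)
The plan is to reduce Lemma~\ref{poly_corr} to Lemma~\ref{techlemma} by expanding each $P_j$ as a sum of monomials with $\Tc_\psi$-coefficients, then to collapse the resulting $k^M$-fold sum (where $M$ is the total number of $\pi$-factors) to the desired $k^n$-fold sum by exploiting orthogonality of the generators $u_{ij}$ within each row of the magic unitary $U_k$.

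First I would expand each $P_j \in \Tc_\psi\langle X_1,\ldots,X_N\rangle$ as a finite sum of monomials of the form $b_0 X_{r_1} b_1 \cdots X_{r_m} b_m$ with $b_\ell \in \Tc_\psi$ and $r_s\in\{1,\ldots,N\}$. By multilinearity of $\psih$, it suffices to prove \eqref{eq:ncpolys} under the assumption that each $x_j^{i(j)}$ is a single such monomial, evaluated at $X_r = \pi_{i(j)}(a_r)$. Under this assumption, $x_1^{i(1)} \cdots x_n^{i(n)}$ is exactly a word of the form to which Lemma~\ref{techlemma} applies, namely $b_0' \pi_{s(1)}(c_1) b_1' \cdots \pi_{s(M)}(c_M) b_M'$ with $b_\ell' \in \Tc_\psi$, $c_t \in A$, and with the index sequence $s(1),\ldots,s(M)$ organized into $n$ consecutive blocks: the $\ell$-th block has length $m_\ell$ with all entries equal to $i(\ell)$, and $M = m_1 + \cdots + m_n$. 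Applying Lemma~\ref{techlemma} rewrites $\psih$ of this word as a sum over all $k^M$ tuples $(t(1),\ldots,t(M))$ of the product $u_{s(1),t(1)} \cdots u_{s(M),t(M)}$ times the corresponding substituted $\psih$-value.

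Next I would invoke the fact that for each fixed $i$ the family $\{u_{i,j}\}_{j=1}^k$ is a set of pairwise orthogonal projections, so $u_{i,j} u_{i,j'} = \delta_{j,j'} u_{i,j}$. This is forced by $u_{ij}^2 = u_{ij} = u_{ij}^*$ together with $\sum_j u_{ij} = 1$: compressing by $u_{i,j}$ gives $\sum_{j'} u_{i,j} u_{i,j'} u_{i,j} = u_{i,j}$, so $\sum_{j'\ne j} u_{i,j} u_{i,j'} u_{i,j} = 0$, and positivity of each summand yields $u_{i,j} u_{i,j'} u_{i,j} = 0$, whence $u_{i,j'} u_{i,j} = 0$ for $j'\ne j$. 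Within each $\ell$-th block of the $u$-product above, all factors share the first index $i(\ell)$, so the in-block product $u_{i(\ell),t_1} \cdots u_{i(\ell),t_{m_\ell}}$ equals $u_{i(\ell), j(\ell)}$ when $t_1 = \cdots = t_{m_\ell} =: j(\ell)$ and vanishes otherwise. Crucially, this collapse is compatible with the non-commutativity of $u$'s from different rows, since orthogonality is invoked only within a single row.

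After the collapse, the $k^M$-fold sum reduces to a $k^n$-fold sum indexed by $(j(1),\ldots,j(n))$, with coefficient $u_{i(1),j(1)} \cdots u_{i(n),j(n)}$ and with the substituted $\psih$-value being exactly the monomial form of $x_1^{j(1)} \cdots x_n^{j(n)}$; reassembling across monomials by linearity recovers \eqref{eq:ncpolys}. The main obstacle is the in-row orthogonality collapse: this is the one place where the structure of $A_s(k)$ is used beyond Lemma~\ref{techlemma}, and it must be carried out carefully because factors from different rows do not commute and the substituted $\psih$-value genuinely depends on the positions of the indices. Everything else is linear bookkeeping that unpacks the polynomial evaluation into a form where Lemma~\ref{techlemma} can be applied directly.
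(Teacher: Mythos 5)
Your proposal is correct and follows essentially the same route as the paper: reduce by multilinearity to the case where each $P_j$ is a monomial with $\Tc_\psi$-coefficients, apply Lemma~\ref{techlemma} to the resulting word whose index sequence is constant on $n$ consecutive blocks, and then use the in-row orthogonality $u_{i,j}u_{i,j'}=\delta_{j,j'}u_{i,j}$ (which the paper cites as an immediate consequence of the defining relations of $A_s(k)$ and which you derive explicitly) to annihilate all terms whose $j$-tuple is not constant on the blocks, collapsing the sum to the desired form. The only difference is that you spell out the positivity argument for the orthogonality relation, which the paper leaves implicit.
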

\begin{proof}
It will suffice to show it when each $P_i$ is a monomial, i.e., is of the form
$d_1X_{f(1)}\cdots d_pX_{f(p)}d_{p+1}$ for some $p\in\Nats$ and $d_1,\ldots,d_{p+1}\in\Tc_\psi$.
Now this case of~\eqref{eq:ncpolys} will follow if we show that, in the setting of Lemma~\ref{techlemma},
whenever we have constant blocks $i(\ell_q+1)=i(\ell_q+2)=\cdots=i(\ell_q+m_q)$ for all $1\le q\le Q$,
with $m_q\ge1$, $\ell_1=0$, $\ell_q=m_1+\cdots+m_{q-1}$ and $m_1+\cdots+m_Q=n$, then~\eqref{eq:bpi} becomes
\begin{multline}\label{eq:jsum}
\psih( b_{1} \pi_{i(1)}(a_{1})b_{2}  \cdots \pi_{i(n)}(a_{n})b_{n+1}) \\
=\sum_{\substack{1\le j(1),\ldots,j(n)\le k,\\j(\ell_q+1)=\cdots=j(\ell_q+m_q),\;(1\le q\le Q)}}
 u_{i(1),j(1)} \cdots u_{i(n),j(n)} \cdot \psih(b_{1} \pi_{j(1)}(a_{1})b_{2}  \cdots \pi_{j(n)}(a_{n})b_{n+1}).
 \end{multline}
But this follows easily, because the terms in the summation on the right hand side of~\eqref{eq:bpi} that don't appear in
the summation of~\eqref{eq:jsum} are all zero.
Indeed, this follows directly from the defining relations of $A_s(k)$, which imply $u_{i,j}u_{i,j'}=\delta_{j,j'}u_{i,j}$.
\end{proof}

By Corollary~\ref{cor:Fexch}, we have exchangeability with respect to $F_\psi$:
for any permutation $\sigma \in S_{\infty}$,
\begin{equation}\label{eq:Fxsymm}
F_\psi[x_{1}^{i(1)} \cdots  x_{n}^{i(n)}]= F_\psi[x_{1}^{\sigma(i(1))} \cdots  x_{n}^{\sigma(i(n))}].
\end{equation}
The next result shows that we have, in a sense, also quantum exchangeability with respect to the expectation $F_\psi$.
\begin{prop}\label{expProp}
For $1 \leq i(1) , \ldots , i(n) \leq k$, we have
\begin{equation}\label{eq:1E}
1 \otimes F_\psi[x_{1}^{i(1)} \cdots  x_{n}^{i(n)}] = \sum_{j(1) , \ldots, j(n) = 1}^{k} u_{i(1),j(1)} \cdots u_{i(n),j(n)} \otimes F_\psi[x_{1}^{j(1)} \cdots  x_{n}^{j(n)}]
\end{equation}
\end{prop}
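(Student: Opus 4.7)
The plan is to reduce the operator-valued identity~\eqref{eq:1E} to a family of scalar identities already within reach of the proof of Lemma~\ref{poly_corr}, by using Remark~\ref{rem:WOTconst} to eliminate $F_\psi$ in favor of a large shift $\alpha^m$ and then invoking the quantum exchangeability of the shifted block of indices. First, for each $b,c\in\Afr_0$ the functional $\Phi_{b,c}(y):=\langle y\bh,\ch\rangle=\psih(\pi_\psi(c^*)y\pi_\psi(b))$ is a bounded linear form on $\Mcal_\psi$, and since the vectors $\bh$ ($b\in\Afr_0$) are dense in $L^2(\Afr,\psi)$ the family $\{\Phi_{b,c}\}$ separates points of $\Mcal_\psi\supseteq\Tc_\psi$. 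Applying $\id_{A_s(k)}\otimes\Phi_{b,c}$ to both sides of~\eqref{eq:1E}, it suffices to verify in $A_s(k)$, for each such $b,c$, that
\[
\psih\bigl(c^*F_\psi[x_1^{i(1)}\cdots x_n^{i(n)}]b\bigr)\cdot 1=\sum_{j(1),\dots,j(n)=1}^{k}u_{i(1),j(1)}\cdots u_{i(n),j(n)}\cdot\psih\bigl(c^*F_\psi[x_1^{j(1)}\cdots x_n^{j(n)}]b\bigr).
\]

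Next I would use Remark~\ref{rem:WOTconst} to convert each $F_\psi$ into a shift. Since the polynomial expressions lie in $\QEu_0\subseteq\REu_0$ and $b,c\in\Afr_0\subseteq\REu_0$, and since only finitely many tuples $(j(1),\dots,j(n))\in\{1,\dots,k\}^n$ appear on the right-hand side, a single integer $m$ may be chosen uniformly so that $\psih(c^*F_\psi[x_1^{r(1)}\cdots x_n^{r(n)}]b)=\psih(c^*\alpha^m(x_1^{r(1)}\cdots x_n^{r(n)})b)$ for every $(r(1),\dots,r(n))\in\{1,\dots,k\}^n$. By Lemma~\ref{lem:alpha}\eqref{lem:alpha-1},\eqref{lem:alpha-2} we have $\alpha^m(x_j^{r(j)})=x_j^{r(j)+m}$ (the $\Tc_\psi$-coefficients in each $P_j$ are left fixed by $\alpha$, while each occurrence of $\pi_i$ gets shifted to $\pi_{i+m}$). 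The required identity thus reduces to the purely polynomial statement
\[
\psih\bigl(c^*x_1^{i(1)+m}\cdots x_n^{i(n)+m}b\bigr)\cdot 1=\sum_{j(1),\dots,j(n)=1}^{k}u_{i(1),j(1)}\cdots u_{i(n),j(n)}\cdot\psih\bigl(c^*x_1^{j(1)+m}\cdots x_n^{j(n)+m}b\bigr).
\]

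Finally, this shifted identity follows by repeating the proofs of Lemmas~\ref{techlemma} and~\ref{poly_corr} with the indices shifted by $m$. Expanding each $x_j^{r(j)+m}$ as a sum of monomials, the expression is a linear combination of monomials of the form $e_0\pi_{i(1)+m}(\tilde a_1)e_1\cdots e_{r-1}\pi_{i(n)+m}(\tilde a_r)e_r$ in which every non-permuted factor $e_s$ lies in $W^*(\{\pi_p(A):p\le m\}\cup\Tc_\psi)$; using the defining inclusion $\Tc_\psi\subseteq W^*(\bigcup_{p>m+k}\pi_p(A))$, each such $e_s$ is SOT-approximated by elements of the $*$-algebra generated by $\{\pi_p(A):p\le m\text{ or }p>m+k\}$, hence none of them involves a $\pi_p$ with $p\in\{m+1,\dots,m+k\}$. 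Applying the quantum exchangeability of $\lambda_1,\dots,\lambda_{m+k+\ell}$ (for $\ell$ large enough) together with the block-diagonal quantum permutation matrix $1_m\oplus U_k\oplus 1_\ell\in A_s(m+k+\ell)$ gives the analogue of Lemma~\ref{techlemma}, and the relations $u_{i,j}u_{i,j'}=\delta_{j,j'}u_{i,j}$ then collapse constant-index blocks exactly as in the proof of Lemma~\ref{poly_corr}. The principal obstacle is precisely this final bookkeeping: one must verify that the outside factors $c^*$, $b$ and the $\Tc_\psi$-coefficients of the $P_j$'s really can all be placed in the ``fixed'' parts of the block-diagonal quantum permutation, which is made possible only because $\Tc_\psi\subseteq W^*(\bigcup_{p\ge N}\pi_p(A))$ for every $N$.
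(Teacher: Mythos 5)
Your proposal is correct and follows essentially the same route as the paper's proof: reduce \eqref{eq:1E} to matrix coefficients against vectors $\bh,\ch$ with $b,c\in\Afr_0$ (using faithfulness of the action of $\Tc_\psi$ on $L^2(\Afr,\psi)$), replace $F_\psi$ by a large power of the shift via Remark~\ref{rem:WOTconst}, and then invoke quantum exchangeability composed with a quotient map onto $A_s(k)$ built from a block-diagonal quantum permutation matrix, with the block-collapsing relations $u_{i,j}u_{i,j'}=\delta_{j,j'}u_{i,j}$ handling the monomial structure as in Lemma~\ref{poly_corr}. The only (cosmetic) difference is that you keep the outer factors at low indices and permute the middle block $\{m+1,\dots,m+k\}$ via $1_m\oplus U_k\oplus 1_\ell$, whereas the paper first applies a classical permutation to return the $x$-indices to $\{1,\dots,k\}$ and push $y,z$ above $k$, then uses $U_k\oplus 1_R$.
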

\begin{proof}
It will suffice to show that for every $y,z\in\Afr$ of the form
\begin{align}
y&=\lambda_{p(1)}(a_1')\cdots\lambda_{p(s)}(a_s') \label{eq:ya'} \\
z&=\lambda_{q(1)}(a_1'')\cdots\lambda_{q(t)}(a_t'') \label{eq:za''}
\end{align}
for positive integers $s,t,p(j),q(j)$ and for $a_j',a_j''\in A$,
we have
\begin{multline}\label{eq:psixEy}
1\cdot\psi(z^*F_\psi(x_1^{i(1)}\cdots x_n^{i(n)})y) \\
=\sum_{j(1),\ldots,j(n)=1}^ku_{i(1),j(1)}\cdots u_{i(n),j(n)}\psi(z^*F_\psi(x_1^{j(1)}\cdots x_n^{j(n)})y),
\end{multline}
since $\Tc_\psi$ acts faithfully on $L^2(\Afr,\psi)$, and since the linear span of the vectors $\yh$ for $y$ as in~\eqref{eq:ya'}
is dense in this Hilbert space.

Let $R=\max(p(1),\ldots,p(s),q(1),\ldots,q(t))$.
By Remark~\ref{rem:WOTconst},
\begin{multline}\label{eq:useEave}
\psih(z^*F_\psi(x_1^{i(1)}\cdots x_n^{i(n)})y) \\
=\psih\big(\pi_{q(t)}(a_t'')^*\cdots\pi_{q(1)}(a_1'')^*
 x_1^{i(1)+R}\cdots x_n^{i(n)+R}\pi_{p(1)}(a_1')\cdots\pi_{p(s)}(a_s')\big).
\end{multline}
Using~\eqref{eq:psiexch} and an appropriate permutation $\sigma$, we find
\begin{multline}\label{eq:aftersigma}
\psih(\pi_{q(t)}(a_t'')^*\cdots\pi_{q(1)}(a_1'')^*
 x_1^{i(1)+R}\cdots x_n^{i(n)+R}
 \pi_{p(1)}(a_1')\cdots\pi_{p(s)}(a_s')) \\
=\psih(\pi_{q(t)+k}(a_t'')^*\cdots\pi_{q(1)+k}(a_1'')^*
 x_1^{i(1)}\cdots x_n^{i(n)}
 \pi_{p(1)+k}(a_1')\cdots\pi_{p(s)+k}(a_s')).
\end{multline}
Now, using (a) Lemma~\ref{techlemma},
and the same quotient map $A_s(k+R)\to A_s(k)$ as appeared in the proof of Lemma~\ref{techlemma}
(but with $R$ instead of $\ell$), we find
\begin{multline*}
\psih(\pi_{q(t)+k}(a_t'')^*\cdots\pi_{q(1)+k}(a_1'')^*
 x_1^{i(1)}\cdots x_n^{i(n)}
 \pi_{p(1)+k}(a_1')\cdots\pi_{p(s)+k}(a_s')) \\
=\sum_{j(1),\ldots,j(n)=1}^k
u_{i(1),j(1)}\cdots u_{i(n),j(n)}
\;\psih\big(\pi_{q(t)+k}(a_t'')^*\cdots\pi_{q(1)+k}(a_1'')^*
 x_1^{j(1)}\cdots x_n^{j(n)} \\
 \;\pi_{p(1)+k}(a_1')\cdots\pi_{p(s)+k}(a_s')\big),
 \end{multline*}
while applying the argument used in~\eqref{eq:useEave} and~\eqref{eq:aftersigma} in reverse, we get
\begin{multline*}
\psih\big(\pi_{q(t)+k}(a_t'')^*\cdots\pi_{q(1)+k}(a_1'')^*
 x_1^{j(1)}\cdots x_n^{j(n)}\pi_{p(1)+k}(a_1')\cdots\pi_{p(s)+k}(a_s')\big)= \\
=\psih(z^*F_\psi(x_1^{j(1)}\cdots x_n^{j(n)})y).
\end{multline*}
Thus, we have~\eqref{eq:psixEy}, as required.
\end{proof}

In what follows, we will abuse notation and rewrite~\eqref{eq:1E} as
$$ F_\psi[x_{1}^{i(1)} \cdots  x_{n}^{i(n)}] = \sum_{j(1) , \ldots, j(n) = 1}^{k} u_{i(1),j(1)} \cdots u_{i(n),j(n)} \cdot F_\psi[x_{1}^{j(1)} \cdots  x_{n}^{j(n)}]$$
where the tensor product structure is implicit.

\begin{prop}\label{ExpProp}
Let $1 \leq i(1) , \ldots , i(n) \leq k$ and assume that for fixed $\ell$ we have that $i(\ell) \neq i(j)$ for all $j \neq \ell$.
Then we have
$$F_\psi[x_{1}^{i(1)} \cdots  x_{n}^{i(n)}] =  F_\psi[x_{1}^{i(1)} \cdots F_\psi[x_{\ell}^{i(\ell)}] \cdots  x_{n}^{i(n)}] .$$
\end{prop}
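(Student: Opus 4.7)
The plan is to follow the scheme of Proposition~4.2 of~\cite{KSp09}, adapted to the present amalgamated $C^*$-algebra setting. First I would rephrase the claim by setting $y := x_\ell^{i(\ell)} - F_\psi[x_\ell^{i(\ell)}]$. By the exchangeability of $F_\psi$ established in~\eqref{eq:Fxsymm}, the element $b := F_\psi[x_\ell^{j}] \in \Tc_\psi$ is independent of $j \in \Nats$, so $y$ is again a $\Tc_\psi$-coefficient polynomial in $\pi_{i(\ell)}(a_1), \ldots, \pi_{i(\ell)}(a_N)$ and satisfies $F_\psi[y] = 0$. The claim is then equivalent to
\[
F_\psi\bigl[x_1^{i(1)} \cdots x_{\ell-1}^{i(\ell-1)}\, y\, x_{\ell+1}^{i(\ell+1)} \cdots x_n^{i(n)}\bigr] = 0.
\]

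Second, I would apply Proposition~\ref{expProp} to this expression (which is legitimate because $y$ has the required polynomial form), obtaining
\[
F_\psi[x_1^{i(1)} \cdots y \cdots x_n^{i(n)}] = \sum_{j(1),\ldots,j(n)=1}^k u_{i(1),j(1)} \cdots u_{i(n),j(n)} \cdot F_\psi[x_1^{j(1)} \cdots y^{j(\ell)} \cdots x_n^{j(n)}],
\]
where $y^{j} := x_\ell^{j} - b$ and $F_\psi[y^{j}] = 0$ for every $j$. Moreover, by Corollary~\ref{cor:Fexch}, the value $F_\psi[x_1^{j(1)} \cdots y^{j(\ell)} \cdots x_n^{j(n)}]$ depends only on the kernel partition of $(j(1),\ldots,j(n))$.

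The main work then lies in exploiting the hypothesis that $i(\ell)$ is distinct from every other $i(r)$, together with the orthogonality relations in $A_s(k)$. Because the projections $u_{i,j}$ are pairwise orthogonal along each column (since $\sum_i u_{ij}=1$ with each $u_{ij}$ a projection), we have $u_{i(\ell), j(\ell)}\, u_{i(r), j(\ell)} = 0$ whenever $r \neq \ell$. I would group the multiple sum according to whether $j(\ell)$ coincides with some $j(r)$ for $r \neq \ell$: in the collision case, repeated use of the above orthogonality, combined with suitable commutation and telescoping via $\sum_{j(\ell)} u_{i(\ell),j(\ell)} = 1$, should annihilate the corresponding terms; in the fresh case, the factor $F_\psi[y^{j(\ell)}]=0$ should propagate to yield vanishing, possibly via an induction on $n$ applied to shorter alternating words of the same form.

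The main obstacle will be the delicate combinatorial bookkeeping in this third step: since the $u_{ij}$ do not commute, isolating the collision terms and exploiting the orthogonality relations requires careful manipulation of the noncommutative products, mimicking the corresponding step in~\cite{KSp09}. Once this is under control, the identity $F_\psi[x_1^{i(1)}\cdots y\cdots x_n^{i(n)}]=0$ follows and hence the proposition.
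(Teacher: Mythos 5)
Your reduction to showing $F_\psi[x_1^{i(1)}\cdots y\cdots x_n^{i(n)}]=0$ for $y:=x_\ell^{i(\ell)}-F_\psi[x_\ell^{i(\ell)}]$ is fine, as is the application of Proposition~\ref{expProp}; the gap is in your third step, and it is not a matter of bookkeeping: the quantum--combinatorial machinery is structurally incapable of proving the singleton case. First, your ``fresh'' terms $F_\psi[x_1^{j(1)}\cdots y^{j(\ell)}\cdots x_n^{j(n)}]$, with $j(\ell)$ a singleton of $\ker\mathbf{j}$, are instances of exactly the statement being proved, at the same length $n$; there is no shorter alternating word to induct on, so that branch is circular. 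Second, the computation provably yields no information here. Already for $n=2$ with $i(1)\ne i(2)$: the collision terms $j(1)=j(2)$ vanish by column orthogonality, the remaining terms all equal $c:=F_\psi[x_1^{i(1)}y^{i(2)}]$ by~\eqref{eq:Fxsymm}, and the surviving coefficient is
\[
\sum_{j(1)\ne j(2)}u_{i(1),j(1)}u_{i(2),j(2)}
=\sum_{j(1),j(2)=1}^{k}u_{i(1),j(1)}u_{i(2),j(2)}-\sum_{j=1}^{k}u_{i(1),j}u_{i(2),j}=1-0=1,
\]
so Proposition~\ref{expProp} collapses to the tautology $1\otimes c=1\otimes c$. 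For general $n$ the situation is the same in spirit: the representation trick with noncommuting projections (Lemma~\ref{SpeichLemma}) produces a coefficient different from $1$ only out of a \emph{crossing} of $\ker\mathbf{i}$, and a singleton block creates no crossing. This is precisely why the proof of Theorem~\ref{thm:free} must invoke the present proposition as external input to dispose of singleton blocks, rather than deriving it from the same computation.

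The statement actually follows from classical exchangeability alone, and that is the paper's route. Testing against vectors $\yh,\zh$ with $y,z\in\Afr_0$, Remark~\ref{rem:WOTconst} lets one replace $F_\psi[x_1^{i(1)}\cdots x_n^{i(n)}]$ by the shifted word $x_1^{i(1)+N}\cdots x_n^{i(n)+N}$ for $N$ large, and, on the right-hand side, replace the inner $F_\psi(x_\ell^{i(\ell)})$ by $x_\ell^{i(\ell)+M}$ for $M\gg N$. Because $i(\ell)$ is a singleton of $\ker\mathbf i$, a finite permutation carries $i(\ell)+M$ back to $i(\ell)+N$ without disturbing any other index occurring in the word or in $y,z$, and Lemma~\ref{lem:R0exch} then identifies the two sides. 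If you want a model in~\cite{KSp09}, the relevant step there is the spreadability/mean-ergodic argument for singletons, not the quantum permutation computation.
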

\begin{proof}
It will suffice to show, for arbitrary $y$ and $z$ as in~\eqref{eq:ya'} and~\eqref{eq:za''}, that we have
\begin{equation}\label{eq:zEy}
\psih(z^*F_\psi[x_{1}^{i(1)} \cdots  x_{n}^{i(n)}]y)=\psih(z^*F_\psi[x_{1}^{i(1)} \cdots F_\psi[x_{\ell}^{i(\ell)}] \cdots  x_{n}^{i(n)}]y).
\end{equation}
By Remark~\ref{rem:WOTconst}, the left hand side of~\eqref{eq:zEy} equals
\begin{equation}\label{eq:zEyLHS}
\psih(z^*x_1^{i(1)+N}\cdots x_n^{i(n)+N}y)
\end{equation}
while the right hand side of~\eqref{eq:zEy} equals
\begin{equation}\label{eq:lout}
\psih(z^*x_1^{i(1)+N}\cdots x_{\ell-1}^{i(\ell-1)+N}F_\psi(x_\ell^{i(\ell)})x_{\ell+1}^{i(\ell+1)+N}\cdots x_n^{i(n)+N}y).
\end{equation}
for all $N$ sufficiently large.
Applying Remark~\ref{rem:WOTconst} again, the quantity~\eqref{eq:lout} is equal to
\begin{equation}\label{eq:NM}
\psih(z^*x_1^{i(1)+N}\cdots x_{\ell-1}^{i(\ell-1)+N}x_\ell^{i(\ell)+M}x_{\ell+1}^{i(\ell+1)+N}\cdots x_n^{i(n)+N}y).
\end{equation}
for all $M$ sufficiently large.
Choosing $N$ and then $M$ large enough, choosing an appropriate permutation and using the exchangeability result Lemma~\ref{lem:R0exch},
we have that the quantities~\eqref{eq:zEyLHS} and~\eqref{eq:NM} agree.
\end{proof}

The following lemma was essential in the proof of the main theorem in Section 5 of ~\cite{KSp09} 
and will be used in a similar way to prove Theorem \ref{thm:free}.  We refer to that paper for proof of this lemma.
\begin{lemma}\label{SpeichLemma}
Consider self-adjoint projections $p$ and $q$.
Assume that $s \geq 2$.  The following are equivalent:
\begin{enumerate}[(i)]
 \item $(pq)^{s} + (p(1-q))^{s} + ((1-p)q)^{s} + ((1-p)(1-q))^{s} = 1$,
\item  $(pq)^{s}p + (p(1-q))^{s}p + ((1-p)q)^{s}(1-p) + ((1-p)(1-q))^{s}(1-p) = 1$,
\item $p$ and $q$ commute.
\end{enumerate}
\end{lemma}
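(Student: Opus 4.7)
The plan is to establish the cycle (iii) $\Rightarrow$ (i) $\Rightarrow$ (ii) $\Rightarrow$ (iii). The directions (iii) $\Rightarrow$ (i) and (iii) $\Rightarrow$ (ii) are immediate: once $p$ and $q$ commute, the four operators $pq$, $p(1-q)$, $(1-p)q$, $(1-p)(1-q)$ become mutually orthogonal projections summing to $1$, and raising them to the $s$-th power (or, in (ii), right-multiplying the first two by $p$ and the last two by $1-p$) leaves them unchanged.

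For (i) $\Rightarrow$ (ii), I would multiply (i) on the left by $p$; the last two terms vanish because each begins with $1-p$, leaving $(pq)^s + (p(1-q))^s = p$. Symmetrically, left-multiplying (i) by $1-p$ yields $((1-p)q)^s + ((1-p)(1-q))^s = 1-p$. Right-multiplying the first identity by $p$, the second by $1-p$, and adding produces (ii).

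For (ii) $\Rightarrow$ (iii), the central observation will be the identity $(pq)^s p = p(qp)^s = (pqp)^s$ (verified by direct expansion using $p^2 = p$), and the analogous identities for the other three terms of (ii). This rewrites (ii) as
\begin{equation*}
(pqp)^s + (p(1-q)p)^s + ((1-p)q(1-p))^s + ((1-p)(1-q)(1-p))^s = 1.
\end{equation*}
The four operators inside the $s$-th powers are now positive contractions, and a direct computation shows they sum to $1$. For any positive contraction $b$ and any $s \ge 2$, the spectral theorem gives $b^s \le b$, with equality if and only if $\mathrm{spec}(b) \subseteq \{0,1\}$, i.e., $b$ is a projection. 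Since the $s$-th powers sum to the same total as the operators themselves, each of the four operators must be a projection; in particular, $pqp$ is a projection.

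To close the argument, I would note that if $pqp$ is a projection, then for every $\xi$ one has $\|pqp\,\xi\|^2 = \langle\xi,(pqp)^2\xi\rangle = \langle\xi,pqp\,\xi\rangle = \|qp\,\xi\|^2$, so the contraction $p$ preserves the norm of $qp\,\xi$, forcing $pqp\,\xi = qp\,\xi$; hence $pqp = qp$, and taking adjoints yields $pq = qp$. The main obstacle is spotting the rewriting $(pq)^s p = (pqp)^s$ that converts (ii) into a statement about positive contractions summing to $1$; once this identity and the operator inequality $b^s \le b$ are in hand, the remainder is routine.
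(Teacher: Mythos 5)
Your proof is correct and complete. Note that the paper itself does not prove this lemma at all --- it explicitly outsources the proof to K\"ostler--Speicher \cite{KSp09} --- so your argument is a self-contained reconstruction rather than something to compare line-by-line against the text. Each step checks out: the reduction of (i) to (ii) by cutting with $p$ and $1-p$ on both sides is valid because the words $((1-p)q)^s$ and $((1-p)(1-q))^s$ begin with $1-p$; the identity $(pq)^sp=(pqp)^s$ (and its three analogues) correctly converts (ii) into a sum of $s$-th powers of four positive contractions that themselves sum to $1$; the inequality $b^s\le b$ for a positive contraction, with equality forcing $b$ to be a projection, then pins down each summand; and the final norm computation $\|pqp\,\xi\|=\|qp\,\xi\|\Rightarrow pqp=qp\Rightarrow pq=qp$ is sound (equivalently, one can check algebraically that $(qp-pqp)^*(qp-pqp)=pqp-(pqp)^2=0$). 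This is essentially the same mechanism used in the cited source, so there is nothing to flag.
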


We now have all of the pieces in place to prove Theorem \ref{thm:free}.

\begin{proof}[Proof of Theorem~\ref{thm:free}.]
It will suffice to show freeness of $(\Bc_i)_{i=1}^\infty$ with respect to $F_\psi$.
Indeed, since $E_\psi$ is a normal extension of $F_\psi$, freeness of $(\Cc_i)_{i=1}^\infty$ with respect to $E_\psi$
will follow.

Fix $n \in \mathbb{N}$ and natural numbers $i(1) \neq i(2), \ldots , i(n-1) \neq i(n)$.
Assume $F_\psi[x_{j}^{k}] = 0$ for $j = 1, \ldots , n$.
We will show $F_\psi[x_{1}^{i(1)} \cdots x_{n}^{i(n)}] = 0$, thereby proving freeness over the tail algebra.

Toward this end, putting $\textbf{i} := (i(1) , \ldots , i(n))$, we denote by $\ker\textbf{i}$ the partition of the set $\{ 1, 2 , \ldots , n \}$ whereby
$j$ and $\ell$ belong to the same block if and only if $i(j) = i(\ell)$.
If $\ker\textbf{i}$ were non-crossing, then one of its blocks would have to be an interval block;
but by assumption, $i(p)\ne i(p+1)$ for all $p$, so this interval would have to be a singleton.
If $\ker\textbf{i}$ has a block consisting of only one element, then by Proposition~\ref{ExpProp}
we have $F_\psi[x_{1}^{i(1)} \cdots x_{n}^{i(n)}] =0$, as desired.

We proceed by induction on the number $r$ of blocks in $\ker\textbf{i}$, starting with $r=n$ and decreasing from there.
If $r=n$ then $\ker\textbf{i}$ must have a singleton block and this case is done.
Now we introduce the induction hypothesis for $\ker\textbf{i}$ with at least $r+1$ blocks and use this to address the case where
$\ker\textbf{i}$ has $r$ blocks.

Utilizing Proposition \ref{expProp}, we have
\begin{align}
 F_\psi[x_{1}^{i(1)} \cdots x_{n}^{i(n)}]
&= \sum_{j(1) , \ldots, j(n) = 1}^{k} u_{i(1),j(1)} \cdots u_{i(n),j(n)} \cdot F_\psi[x_{1}^{j(1)} \cdots x_{n}^{j(n)}] \notag \\[1ex]
&= \sum_{\pi \in \mathcal{P}(n)} \sum_{ \substack{\ker(\textbf{j}) =  \pi \\ 1 \leq j(1), \ldots , j(n) \leq k}}  u_{i(1),j(1)} \cdots u_{i(n),j(n)} \cdot F_\psi[x_{1}^{j(1)} \cdots x_{n}^{j(n)}]. \label{eq:uijs}
\end{align}
Observe that if $j(\ell) = j(\ell + 1)$ then $u_{i(\ell),j(\ell)}u_{i(\ell + 1),j(\ell + 1)} = 0$,
since $i(\ell) \neq i(\ell + 1)$.
Thus, we may restrict to the cases where
$j(1) \neq j(2) , \ldots , j(n-1) \neq j(n)$.
If the number of blocks is at least $r+1$,
then $F_\psi[x_{1}^{j(1)} \cdots x_{n}^{j(n)}]=0$ by the induction hypothesis,
so we may restrict to the cases where $\pi$ has at most $r$ blocks.

We will chose a particular representation of $A_s(n)$ and apply it to~\eqref{eq:uijs}.
It will be convenient to assume that $i$ takes values only in the set
$\{ 1,3, 5, \ldots, 2r - 1 \}$,
which we can do by the exchangeability found in equation~\eqref{eq:Fxsymm}.
Now let $\{ q_{m} \}_{m=1}^{r}$ be a family of self-adjoint projections in $M_2(\Cpx)$, to be specified later,
and consider the unitary matrix
$$\ut_{m} =  \left( \begin{array}{cc} q_{m} & 1-q_{m} \\ 
                 1 - q_{m} & q_{m} 
                 \end{array}\right)\in M_4(\Cpx).$$
Consider the $2r \times 2r$ block matrix $\ut = \bigoplus_{m=1}^{r} \ut_{m}\in M_{4r}(\Cpx)$
and let $(\ut_{i,j})_{1\le i,j\le 2r}$ be the corresponding entries.
Thus, we have
\[
\ut_{i,j}=\begin{cases}
q_m,&i=j\in\{2m-1,2m\} \\
1-q_m,&\{i,j\}=\{2m-1,2m\} \\
0,&\text{otherwise.}
\end{cases}
\]
It is easy to check that the $\ut_{i,j}$ satisfy the defining relations for $A_{s}(2r)$, so there is a $*$-representation from
$A_s(2r)$ sending $u_{i,j}$ to $\ut_{i,j}$.
We apply this $*$-representation to~\eqref{eq:uijs}.

By assumption, the indices for $\textbf{i} = (i(1) , \ldots , i(n))$ take on only odd numbers.
Thus, for non-vanishing $\ut_{i,j}$, the $j$ value determines the $i$
value since there are only 2 non-zero entries in the $j$-th row of $\ut$, only one of which is odd.
Therefore, $\ker\textbf{j} \leq \ker\textbf{i}$.
Thus, in the sum resulting from applying this $*$-representation, we may assume that $\pi \leq \ker\textbf{i}$.
As we have already discarded the cases where $|\pi| > r$ we are left with one choice, namely $\pi = \ker\textbf{i}$.
Under these conditions, we have
\begin{align*}
F_\psi[x_{1}^{i(1)} \cdots x_{n}^{i(n)}]
&= \sum_{\substack{1 \leq j(1), \ldots , j(n) \leq k \\ \ker(\textbf{j}) = \ker(\textbf{i})}}\ut_{i(1),j(1)} \cdots\ut_{i(n),j(n)} \cdot F_\psi[x_{1}^{j(1)} \cdots x_{n}^{j(n)}] \\
&= \bigg( \sum_{\substack{1 \leq j(1), \ldots , j(n) \leq k \\ \ker(\textbf{j}) = \ker(\textbf{i})}}\ut_{i(1),j(1)} \cdots\ut_{i(n),j(n)}  \bigg) \cdot F_\psi[x_{1}^{i(1)} \cdots x_{n}^{i(n)}],
\end{align*}
where the last equality follows easily by the exchangeability found in equation~\eqref{eq:Fxsymm}.

Thus, if the sum
\begin{equation}\label{eq:sumus}
\sum_{\substack{1 \leq j(1), \ldots , j(n) \leq k \\ \ker(\textbf{j}) = \ker(\textbf{i})}}  \ut_{i(1),j(1)} \cdots \ut_{i(n),j(n)}
\end{equation}
is not equal to $1$, then we may conclude $F_\psi[x_{1}^{i(1)} \cdots x_{n}^{i(n)}] = 0$, proving our theorem.

As observed at the start of the proof, we may assume $\ker\textbf{i}$ is crossing, 
and we choose two blocks that maintain this crossing.
The values of $\textbf{i}$ on these two crossing blocks are $2\ell-1$ and $2m-1$ for some distinct $\ell,m\in\{1,\ldots,r\}$.
We now choose our projections $q_i$, taking $q_\ell$ and $q_m$ that do not commute, and letting $q_i=1$ for all other values of $i$.
Now a careful analysis shows that~\eqref{eq:sumus} reduces to one of the following:
\begin{gather*}
(q_{m}q_{\ell})^{s} + (q_{m}(1-q_{\ell}))^{s} + ((1-q_{m})q_{\ell})^{s} + ((1-q_{m})(1-q_{\ell}))^{s} \\[1ex]
(q_{m}q_{\ell})^{s}q_{m} + (q_{m}(1-q_{\ell}))^{s}q_{m} + ((1-q_{m})q_{\ell})^{s}(1-q_{m}) + ((1-q_{m})(1-q_{\ell}))^{s}(1-q_{m}) \\[1ex]
(q_{\ell}q_{m})^{s}q_{\ell} + (q_{\ell}(1-q_{m}))^{s}q_{\ell} + ((1-q_{\ell})q_{m})^{s}(1-q_{\ell}) + ((1-q_{\ell})(1-q_{m}))^{s}(1-q_{\ell})
\end{gather*}
for some $s \geq 2$.
By Lemma \ref{SpeichLemma}, these are all three distinct from $1$, and our theorem is proved.
\end{proof}

The first part of the next result answers Question~\ref{ques:D} affirmatively in the case of a quantum symmetric state.
The second part is a careful phrasing of the fact, now essentially obvious from Theorem~\ref{thm:free},
that $\pi(\Afr)$ is the free product with amalgamation over the tail $C^*$-algebra of the images of $A$.
It gives a sort of converse to Proposition~\ref{prop:freeqsymm}.

\begin{prop}\label{prop:QSSfreeprod}
Let $\psi$ be a quantum symmetric state on $\Afr$.
Then $D_\psi\subseteq\pi_\psi(\Afr)$.
Moreover, there are
\renewcommand{\labelitemi}{$\bullet$}
\begin{itemize}
\item a unital $C^*$-algebra $B$,
\item a conditional expectation $F:B \to D$ onto a unital $C^*$-subalgebra $D$ of $B$ whose GNS representation is faithful (on $B$),
\item a unital $*$-homomorphism $\theta:A\to B$
\item a state $\rho$ of $D$
\end{itemize}
such that
\begin{itemize}
\item $B$ is generated by $\theta(A)\cup D$,
\end{itemize}
and, letting
\begin{equation}\label{eq:*BF}
(Y,G)=(*_D)_{i=1}^\infty(B,F)
\end{equation}
be the $C^*$-algebra free product of infinitely many copies of $(B,F)$ with itself, amalgamated over $D$,
there is an identification
\[
(\pi_\psi(\Afr),F_\psi\restrict_{\pi_\psi(\Afr)})=(Y,G),
\]
where $F_\psi$ is the conditional expectation constructed in
Theorem~\ref{thm:E} (see also Observation~\ref{obs:condexp})
with this free product pair,
in such a way that
\begin{enumerate}[(i)]
\item\label{it:QSSfreeprod-3} the tail $C^*$-algebra $D_\psi$ in $\pi_\psi(\Afr)$ is identified with the copy of $D$
over which the amalgamation is performed in~\eqref{eq:*BF}
\item\label{it:QSSfreeprod-4} the $*$-homomorphism $\pi_\psi$ corresponds to the free product $*_1^\infty\theta$ of infinitely many copies of $\theta$
\item\label{it:QSSfreeprod-5} the state $\psih$ restricted to $\pi_\psi(\Afr)$ corresponds to the state $\rho\circ G$ of $Y$.
\end{enumerate}
\end{prop}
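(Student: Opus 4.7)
The plan is to prove the statement in two parts: first establish $D_\psi\subseteq\pi_\psi(\Afr)$, then produce the tuple $(B,D,F,\theta,\rho)$ identifying $(\pi_\psi(\Afr),F_\psi\restrict_{\pi_\psi(\Afr)})$ with the amalgamated free product. The crux of Part~1 is to show $F_\psi(\pi_\psi(\Afr))\subseteq\pi_\psi(\Afr)$; granted this, $\pi_\psi(\Afr)\cap\Tc_\psi$ is a unital $C^*$-subalgebra of $\Tc_\psi$ closed under the operation in~\eqref{eq:FD}, and the minimality characterization in Definition~\ref{def:tailC*} yields the inclusion. This will be the main obstacle, since $F_\psi$ is given as a WOT limit in Theorem~\ref{thm:E}, which need not preserve the norm-closed algebra $\pi_\psi(\Afr)$. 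The plan is to invoke Theorem~\ref{thm:free}: freeness of $(\Bc_i)_{i=1}^\infty$ over $\Tc_\psi$ with respect to $F_\psi$ identifies $(\QEu_\psi,F_\psi)$ with the amalgamated free product $(\ast_{\Tc_\psi})_{i=1}^\infty(\Bc_i,F_\psi\restrict_{\Bc_i})$, with $\alpha$ as the canonical shift. Theorem~6.1 of~\cite{AD09} then yields, for $y$ in the $*$-algebra generated by finitely many $\Bc_i$'s, the \emph{norm}-convergent Ces\`aro formula
\[
F_\psi(y)=\lim_{n\to\infty}\frac1n\sum_{k=N+1}^{N+n}\alpha^k(y).
\]
Specializing to $y\in\Afr_0=\alg(\bigcup_i\pi_i(A))$, each summand stays in $\Afr_0\subseteq\pi_\psi(\Afr)$, so the norm limit $F_\psi(y)$ lies in $\pi_\psi(\Afr)$; continuity of $F_\psi$ and density of $\Afr_0$ extend this to all of $\pi_\psi(\Afr)$.

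For Part~2, set $D:=D_\psi$, $B:=C^*(\pi_1(A)\cup D_\psi)\subseteq\pi_\psi(\Afr)$ (well-defined by Part~1), $F:=F_\psi\restrict_B$, $\theta:=\pi_1$, and $\rho:=\psih\restrict_D$; then $B$ is generated by $\theta(A)\cup D$ by construction. Faithful GNS of $F$ follows by pulling back, via the injective embedding $\sigma_\psi:\QEu_\psi\hookrightarrow\Nc_\psi$ of Proposition~\ref{prop:Npsi}, the faithful-GNS extension $E_\psi:\Nc_\psi\to\Tc_\psi$: if $F(b^*b)=0$ then $E_\psi(\sigma_\psi(b)^*\sigma_\psi(b))=0$, so $\sigma_\psi(b)=0$ and hence $b=0$. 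Writing $B_{(i)}:=C^*(\pi_i(A)\cup D_\psi)$, the permutation automorphisms $\betah_\sigma$ give unital $*$-isomorphisms $B\to B_{(i)}$ fixing $D$ pointwise and sending $\theta$ to $\pi_i$; by Theorem~\ref{thm:free} (applied via $B_{(i)}\subseteq\Bc_i$), the family $(B_{(i)})_i$ is free over $D$ with respect to $F_\psi$. The universal property of the (reduced) amalgamated free product $(Y,G):=(\ast_D)_{i=1}^\infty(B,F)$ then produces a unital $*$-homomorphism $\iota:Y\to\pi_\psi(\Afr)$ extending these isomorphisms, fixing $D$, intertwining $G$ with $F_\psi\restrict_{\pi_\psi(\Afr)}$, and surjective by construction. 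Injectivity is immediate from faithful GNS of $G$: if $\iota(y)=0$ then $G(y^*y)=F_\psi(\iota(y^*y))=0$, forcing $y=0$.

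Properties~\eqref{it:QSSfreeprod-3}--\eqref{it:QSSfreeprod-5} follow directly: the amalgamated copy of $D$ in $Y$ maps to $D_\psi\subseteq\pi_\psi(\Afr)$ under $\iota$; the composition $\iota\circ(\freeprod_{i=1}^\infty\theta)$ agrees with $\pi_\psi$ factor by factor; and by Theorem~\ref{thm:E}\eqref{thm:E-1}, $\psih\restrict_{\pi_\psi(\Afr)}=\rho\circ F_\psi\restrict_{\pi_\psi(\Afr)}$, which transports via $\iota$ to $\rho\circ G$ on $Y$.
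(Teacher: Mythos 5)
Your main line --- deriving the norm-convergent Ces\`aro formula for $F_\psi$ from Theorem~\ref{thm:free} together with Theorem~6.1 of \cite{AD09}, concluding $F_\psi(\pi_\psi(\Afr))\subseteq\pi_\psi(\Afr)$ and hence $D_\psi\subseteq\pi_\psi(\Afr)$ by minimality, and then assembling $\pi_\psi(\Afr)$ as the amalgamated free product of the algebras $C^*(\pi_i(A)\cup D_\psi)$ over $D_\psi$ --- is exactly the paper's proof, up to cosmetic choices (you amalgamate over $\Tc_\psi$ rather than $D_\psi$ in the first step, and use the minimality characterization in place of the recursion of Observation~\ref{obs:Drecurse}).

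There is, however, a genuine error in both places where you address faithfulness: you conflate faithfulness of a conditional expectation with faithfulness of its GNS representation. For the bullet ``$F$ has faithful GNS representation'' you argue $F(b^*b)=0\Rightarrow E_\psi(\sigma_\psi(b)^*\sigma_\psi(b))=0\Rightarrow\sigma_\psi(b)=0$. Proposition~\ref{prop:Npsi} asserts only that the \emph{GNS representation} of $E_\psi$ is faithful, i.e.\ that $E_\psi(x^*a^*ax)=0$ for \emph{all} $x\in\Nc_\psi$ forces $a=0$; it does not assert $E_\psi(a^*a)=0\Rightarrow a=0$. Worse, the implication $F(b^*b)=0\Rightarrow b=0$ that you are trying to establish is false in general: for $\psi=*_1^\infty\phi$ with $\phi$ the non-faithful state of Example~\ref{ex:TneD}, one has $D_\psi=\Cpx1$, $B\cong M_2(\Cpx)$ and $F=\phi$, so $F(e_{12}^*e_{12})=\phi(e_{22})=0$ while $e_{12}\ne0$. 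Thus this step proves a false statement from a misquoted hypothesis, and the (true, weaker) GNS-faithfulness of $F$ on $B$ is left unestablished; it does not pass automatically from $\Nc_\psi$ or from $Y=C^*(\pi_\psi(\Afr)\cup D_\psi)$ down to the subalgebra $B$, and requires an argument using freeness (for instance: Cauchy--Schwarz gives $F_\psi(x'b^*bx'')=0$ for all $x',x''\in B$, the moment expansion for alternating words then yields $F_\psi(y^*b^*by)=0$ for all $y\in Y$, and the GNS representation of $F_\psi\restrict_Y$ is faithful because $\oneh$ is cyclic for $Y$ and $\psih=\psih\circ F_\psi$ there). The same slip appears in your injectivity argument for $\iota$, where you infer $y=0$ from $G(y^*y)=0$ alone; that instance is harmless, since $\iota(y)=0$ gives $G(x^*y^*yx)=F_\psi(\iota(x^*y^*yx))=0$ for all $x\in Y$ and then GNS-faithfulness of $G$ applies, but the first instance is a real gap.
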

\begin{proof}
Let $B_i=C^*(D_\psi\cup\pi_i(A))\subseteq\Mcal_\psi$.
By Theorem~\ref{thm:free} and Observation~\ref{obs:condexp}, the family $(B_i)_{i=1}^\infty$ is free with respect to $F_\psi$.
Moveover, we have
$C^*(\bigcup_{i=1}^\infty B_i)=C^*(\pi_\psi(\Afr)\cup D_\psi)$.
Denoting this $C^*$-algebra by $Y$, the GNS representation of the restriction of $\psih$ to $Y$ is, of course, faithful.
Since $F_\psi$ is $\psih$-preserving, alse the GNS representation of the restriction of $F_\psi$ to $Y$ is faithful.
Thus, $(Y,F_\psi\restrict_Y)$ is isomorphic to the amalgamated free product of $C^*$-algebras
\begin{equation}\label{eq:Y}
(Y,F_\psi\restrict_Y)=(*_{D_\psi})_{i=1}^\infty(B_i,F_\psi\restrict_{B_i}).
\end{equation}
By the exchangeability found in equation~\eqref{eq:Fxsymm},
each $(B_i,F_\psi\restrict_{B_i})$ isomorphic $(B_1,F_\psi\restrict_{B_1})$
by an isomorphism that is the identity on  $D_\psi$ and intertwines $\lambda_i$ and $\lambda_1$.
Thus, we may write $D$ for $D_\psi$, $(B,F)$ for $(B_1,F_\psi\restrict_{B_1})$ and $\theta:A\to B$ for the $*$-homomorphism
$\pi_1=\pi_\psi\circ\lambda_1$.

Letting $\alpha$ denote the shift $*$-endomorphism of $Y$, that sends $B_i$ to $B_{i+1}$, that we have, thanks to the free product
realization~\eqref{eq:Y} and Theorem~6.1 of~\cite{AD09}, that for every $y\in Y$, the averages
\[
\frac1n\sum_{k=1}^n\alpha^k(y)
\]
converge in norm to $F_\psi(y)$.
But this $*$-endomorphism $\alpha$ agrees with the one constructed on Lemma~\ref{lem:alpha} (when the latter is restricted to $Y$).
Since $\alpha(\pi_\psi(\Afr))\subseteq\pi_\psi(\Afr)$,
the inclusion $F_\psi(\pi_\psi(\Afr))\subseteq\pi_\psi(\Afr)$ follows.
Thus, using the recursive description of the generation of $D_\psi$ from Observation~\ref{obs:Drecurse}, we get $D_\psi\subseteq\pi_\psi(\Afr)$.

Now, letting $\rho$ be the restriction of $\psih$ to $D=D_\psi$,
the assertions of the proposition follow easily.
\end{proof}

\section{Description of quantum symmetric states}
\label{sec:descr}

Now we combine Proposition~\ref{prop:freeqsymm} and Proposition~\ref{prop:QSSfreeprod}
to formulate a description
of quantum symmetric states on $\Afr$ in terms of amalgamated free products of von Neumann algebras.
Part of this result is a more complete version of Proposition~3.1 of~\cite{DK}.

\begin{defi}\label{def:VA}
For a unital $C^*$-algebra $A$, let $\VEu(A)$ be the set of all equivalence classes of quintuples $(B,D,F,\theta,\rho)$, such that
\begin{enumerate}[(i)]
\item $B$ is a $C^*$-algebra
\item $D$ is a unital $C^*$-subalgebra of $B$,
\item $F:B\to D$ is a conditional expectation onto $D$,
\item\label{item:faithfulGNS} the GNS representation of $F$ is a faithful representation of $B$,
\item $\theta:A\to B$ is a unital $*$-homomorphism,
\item\label{item:ADgen} $\theta(A)\cup D$ generates $B$ as a $C^*$-algebra,
\item\label{item:Dsmallest} $D$ is the smallest unital $C^*$-subalgebra of $B$ that satisfies
\begin{equation}\label{eq:Fdas}
F\big(x_0\theta(a_1)x_1\cdots\theta(a_n)x_n\big)\in D
\end{equation}
whenever $n\in\Nats$, $x_0,\ldots,x_n\in D$ and $a_1,\ldots,a_n\in A$,
\item $\rho$ is a state on $D$,
\item\label{item:fpGNSfaithful} letting $(Y,G)=(*_D)_{i=1}^\infty(B,F)$ denote the amalgamated free product of $C^*$-noncommutative probability spaces
and letting $\pi_{\rho\circ G}$ be the GNS representation of $Y$ corresponding to the state $\rho\circ G$ of $Y$,
we have $\ker\pi_{\rho\circ G}\cap D=\{0\}$, i.e., $\pi_{\rho\circ G}$ is faithful when restricted to $D$,
\end{enumerate}
and where quintuples $(B,D,F,\theta,\rho)$ and $(B',D',F',\theta',\rho')$ are defined to be equivalent
if there is a $*$-isomorphism $\pi:B\to B'$ sending $D$ onto $D'$ and so that $\pi\circ F=F'\circ\pi$,
$\pi\circ\theta=\theta'$ and $\rho'\circ\pi\restrict_D=\rho$.
\end{defi}

\begin{remarks}
\begin{enumerate}[(a)]
\item
In order to avoid set theoretic difficulties, instead of speaking of about the set of equivalence classes of all quintuples,
we should note that for a given $A$, conditions~\eqref{item:Dsmallest} and~\eqref{item:ADgen} impose a limit on the
cardinality of a dense subset of $B$, so we may choose a particular Hilbert space $\HEu$ and work with the set of 
all equivalence classes of quintuples where $B$ is a $C^*$-algebra in $B(\HEu)$.
However, in practice we will ignore this issue and use the sloppy language ``all quintuples.''
\item
In practice, we will surpress notation for equivalence classes, and just write $(B,D,F,\theta,\rho)\in\VEu(A)$;
implicitly, we will identify two such quintuples as being the same if there is a $*$-isomorphism $\pi$ as described
in the definition above.
\item The condition~\eqref{item:fpGNSfaithful} is slightly subtle, as it may be satisfied even when the GNS representation $\pi_{\rho\circ F}$ of
the state $\rho\circ F$ of $B$ fails to be faithful when restricted to $D$.
Note that, by hypothesis~\eqref{item:faithfulGNS} and the free product construction,
the GNS representation of $Y$ (on the Hilbert $C^*$-module $L^2(Y,G)$) arising from $G$ is faithful.
Moreover, by Proposition~\ref{prop:rhoFt}, condition~\eqref{item:fpGNSfaithful} is equivalent to faithfulness of $\pi_{\rho\circ G}$ itself.
\end{enumerate}
\end{remarks}

Here is the classification of quantum symmetric states.

\begin{thm}\label{thm:descr}
There is a bijection
\begin{equation}\label{eq:bijVQ}
\VEu(A)\to\QSS(A)
\end{equation}
that assigns to $V=(B,D,F,\theta,\rho)\in\VEu(A)$ the state $\psi=\psi_V$
given as follows:
letting 
\begin{equation}\label{eq:YGfp}
(Y,G)=(*_D)_{i=1}^\infty(B,F)
\end{equation}
be the amalgamated free product and letting $*_1^\infty\theta:\Afr\to Y$ be the free product (arising from the univeral property) of the
homomorphisms from the copies of $A$ into the respective copies of $B$, 
we set $\psi=\rho\circ G\circ(*_1^\infty\theta)$.

We let $\pi_{\rho\circ G}$ denote the GNS representation of the state $\rho\circ G$ on $Y$.
Then we have the correspondence shown in Table~\ref{tab:cor} between objects defined in Section~\ref{sec:tails} and objects associated to $(Y,G)$,
\begin{table}[hbt]
\caption{Correspondence between $(B,D,F,\theta,\rho)\in\VEu(A)$ and $\psi\in\QSS(A)$.}\label{tab:cor}
\begin{tabular}{c|c|r}
from $(Y,G)$ & from Section~\ref{sec:tails} & label \\ \hline\hline
\rule{0ex}{2.5ex}$L^2(Y,\rho\circ G)$ & $L^2(\Afr,\psi)$ & (a) \\ \hline
\rule{0ex}{2.5ex}$Y$& $\pi_\psi(\Afr)$ & (b) \\ \hline
\rule{0ex}{2.5ex}$\pi_{\rho\circ G}(Y)''$ & $\Mcal_\psi$ & (c) \\ \hline
\rule{0ex}{2.5ex}$\pi_{\rho\circ G}(D)''$ & $\Tc_\psi$ & (d) \\ \hline
\rule{0ex}{2.5ex}$G$ & $F_\psi\restrict_{\pi_\psi(\Afr)}$ & (e) \\ \hline
\rule{0ex}{2.5ex}$D$ & $D_\psi$ & (f) \\ \hline
\rule{0ex}{2.5ex}$B_i$ & $C^*(\pi_\psi(\lambda_i(A))\cup D_\psi)$ & (g) \\ \hline
\end{tabular}
\end{table}
where $B_i$ denotes the $i$-th copy of $B$ in $Y$
and where,
(since by condition \ref{def:VA}\eqref{item:fpGNSfaithful} and Proposition~\ref{prop:rhoFt}, $\pi_{\rho\circ G}$ is faithful on $Y$)
in the left column of Table~\ref{tab:cor}, we implicitly identify $Y$ and $C^*$-subalgebras of $Y$ with their corresponding images under $\pi_{\rho\circ G}$.
\end{thm}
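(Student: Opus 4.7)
The plan is to establish the bijection~\eqref{eq:bijVQ} by constructing an explicit inverse using Propositions~\ref{prop:freeqsymm} and~\ref{prop:QSSfreeprod}, then verify the entries of Table~\ref{tab:cor} one by one. For the forward direction, given $V = (B,D,F,\theta,\rho) \in \VEu(A)$, form $(Y,G)$ as in~\eqref{eq:YGfp} and let $\theta_i \colon A \to B_i \subseteq Y$ denote the composition of $\theta$ with the embedding of $B$ as the $i$-th free factor. Then $(\theta_i(A))_{i=1}^\infty$ is free over $D$ with respect to $G$ by the free product construction, and $G \circ \theta_i = F \circ \theta$ does not depend on $i$, so Proposition~\ref{prop:freeqsymm} yields $\psi_V \in \QSS(A)$.

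For the backward direction, given $\psi \in \QSS(A)$, Proposition~\ref{prop:QSSfreeprod} directly produces the quintuple $V(\psi) = (B, D, F, \theta, \rho)$ with $B = C^*(\pi_\psi(\lambda_1(A)) \cup D_\psi)$, $D = D_\psi$, $F = F_\psi\restrict_B$, $\theta = \pi_\psi \circ \lambda_1$, and $\rho = \psih\restrict_{D_\psi}$. I would verify each clause of Definition~\ref{def:VA}: (i)--(iii), (v), (vi), (viii) are immediate from the construction; clause~\eqref{item:faithfulGNS} follows because Proposition~\ref{prop:QSSfreeprod} yields that $F_\psi\restrict_{\pi_\psi(\Afr)}$ has faithful GNS, which then restricts to $B$ via the canonical Hilbert module inclusion $L^2(B,F) \hookrightarrow L^2(\pi_\psi(\Afr), F_\psi\restrict_{\pi_\psi(\Afr)})$; clause~\eqref{item:fpGNSfaithful} follows from the same faithfulness together with Proposition~\ref{prop:rhoFt}. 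The subtle clause is~\eqref{item:Dsmallest}, the minimality of $D_\psi$ among $C^*$-subalgebras of $B$ closed under the operation~\eqref{eq:Fdas}. I would argue containment both ways: that $D_\psi$ itself satisfies the closure property follows from Definition~\ref{def:tailC*} together with the $D_\psi$-bimodule property of $F_\psi$ (to absorb the outer factors $x_0$ and $x_n$); no proper subalgebra suffices, because the multi-index generators appearing in Observation~\ref{obs:Drecurse} reduce to single-index expressions using exchangeability (Corollary~\ref{cor:Fexch}) and freeness over $D_\psi$ (Theorem~\ref{thm:free}), through the operator-valued moment-cumulant formula.

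Mutual inversion is then verified by tracing constructions. Starting with $V$ and applying the backward map to $\psi_V$ returns a quintuple equivalent to $V$: condition~\eqref{item:ADgen} identifies the original $B$ with the first free factor $B_1$ of $Y$, and the minimality argument above identifies $D$ with the tail $C^*$-algebra $D_{\psi_V}$. Starting with $\psi$, the round trip yields $\psi = \psih\restrict_{\pi_\psi(\Afr)} \circ \pi_\psi = \rho \circ G \circ (*_1^\infty \theta) = \psi_{V(\psi)}$. For Table~\ref{tab:cor}, entries (a)--(c) are the standard GNS identifications under the isomorphism $(\pi_\psi(\Afr), \psih\restrict_{\pi_\psi(\Afr)}) \cong (Y, \rho \circ G)$ furnished by Proposition~\ref{prop:QSSfreeprod}; entry (d) is Proposition~\ref{prop:TinW*D}; entry (e) follows because both conditional expectations coincide on $\pi_\psi(\Afr)$, being the norm-limits of averaged shift iterates via Lemma~\ref{lem:alpha} and Theorem~6.1 of~\cite{AD09}; entries (f) and (g) are the very identifications used to construct the inverse map.

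The principal difficulty is clause~\eqref{item:Dsmallest}: reconciling Definition~\ref{def:VA}, which uses only the single homomorphism $\theta$, with Definition~\ref{def:tailC*}, which allows arbitrary indices $i_j$, requires the full strength of freeness with amalgamation established in Section~\ref{sec:freeness}. This is the one point at which the technical machinery developed earlier is truly indispensable; everything else is a matter of carefully matching the two sides of the correspondence.
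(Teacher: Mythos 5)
Your overall route is the same as the paper's: Proposition~\ref{prop:freeqsymm} for the forward map, Proposition~\ref{prop:QSSfreeprod} for the backward map, and the reconciliation of the single-index minimality condition~\ref{def:VA}\eqref{item:Dsmallest} with the multi-index minimality of Definition~\ref{def:tailC*} via freeness over $D_\psi$ and the operator-valued moment--cumulant formula. That last point is indeed where the paper also locates the only real subtlety of the backward direction, and your treatment of it matches the paper's.

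There is, however, one genuine gap in the forward direction. Given $V=(B,D,F,\theta,\rho)\in\VEu(A)$, entry~(b) of Table~\ref{tab:cor} asserts $Y=\pi_{\psi_V}(\Afr)$, and your round-trip argument needs $D=D_{\psi_V}$; both require knowing that $*_1^\infty\theta$ maps $\Afr$ \emph{onto} $Y$, equivalently that $D\subseteq C^*\bigl((*_1^\infty\theta)(\Afr)\bigr)$. This is not automatic and is precisely why conditions \eqref{item:ADgen} and \eqref{item:Dsmallest} are built into Definition~\ref{def:VA}: the paper proves it by observing that $C^*\bigl((*_1^\infty\theta)(\Afr)\bigr)$ is invariant under the free right shift $\alpha$ of $Y$, hence closed under $G$ by the Ces\`aro-mean formula $G(y)=\lim_{n\to\infty}\frac1n\sum_{k=1}^n\alpha^k(y)$ from Theorem~6.1 of~\cite{AD09}; the minimality condition~\eqref{item:Dsmallest} then forces $D\subseteq C^*\bigl((*_1^\infty\theta)(\Afr)\bigr)$, and the generation condition~\eqref{item:ADgen} gives surjectivity. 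You invoke the Ces\`aro-mean characterization of $G$ for entry~(e) and cite condition~\eqref{item:ADgen} to identify $B$ with $B_1$, but you never deploy condition~\eqref{item:Dsmallest} in the forward direction, and your appeal to Proposition~\ref{prop:QSSfreeprod} for entries (a)--(c) presupposes the identification $D=D_{\psi_V}$ that this surjectivity is needed to establish. Once this step is inserted, the rest of your plan goes through essentially as in the paper.
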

\begin{proof}
Given $V=(B,D,F,\theta,\rho)\in\VEu(A)$,
it follows from Proposition~\ref{prop:freeqsymm} that $\psi=\psi_V$ as described is
a quantum symmetric state on $\Afr$.
We will now verify the identifications indicated in Table~\ref{tab:cor}.

By choice of $\psi$, we have
\begin{equation}\label{eq:L2s}
L^2(\Afr,\psi)=L^2(\Afr,\rho\circ G\circ(*_1^\infty\theta)).
\end{equation}
An argument used in the proof of Proposition~\ref{prop:QSSfreeprod} shows that $C^*((*_1^\infty\theta)(\Afr))$ is closed under $G$.
Indeed, it is clearly closed under the ``free right shift'' endomorphism $\alpha$ of $Y$, and by Theorem~6.1 of~\cite{AD09}, $G(y)$
is the norm limit
\begin{equation}\label{eq:Gylim}
G(y)=\lim_{n\to\infty}\frac1n\sum_{k=1}^n\alpha^k(y).
\end{equation}
Now condition~\ref{def:VA}\eqref{item:Dsmallest} yields that $C^*((*_1^\infty\theta)(\Afr))$ contains $D$,
so condition~\ref{def:VA}\eqref{item:ADgen} shows that $C^*((*_1^\infty\theta)(\Afr))$ is all of $Y$.
This implies~(b) and also 
allows us to make the identification $L^2(\Afr,\rho\circ G\circ(*_1^\infty\theta))=L^2(Y,\rho\circ G)$,
which yields~(a).

Taking von Neumann algebra closures yields a normal $*$-isomorphism
$\pi_\psi(\Afr)''\to\pi_{\rho\circ G}(Y)''$,
which is~(c).

From the free product construction~\eqref{eq:YGfp},
we see that the identification~\eqref{eq:L2s} of Hilbert spaces identifies
$\Tc_\psi$ with the von Neumann algebra
\begin{equation}\label{eq:tailinY}
\bigcap_{N\ge1}W^*(\bigcup_{j\ge N}\pi_{\rho\circ G}(B_i)).
\end{equation}
Proposition~\ref{prop:TinW*D} implies that the von Neumann algebra~\eqref{eq:tailinY} is contained in $\pi_{\rho\circ G}(D)''$,
while the opposite inclusion is implied by the formula
\[
G(y)=\lim_{n\to\infty}\frac1n\sum_{k=N+1}^{N+n}\alpha^k(y),
\]
which follows from~\eqref{eq:Gylim}.
Thus, we have shown~(d).

Comparing the definition of $F_\psi$ (i.e., \eqref{eq:WOTlim} of Theorem~\ref{thm:E}) with~\eqref{eq:Gylim} shows
the identification of $F_\psi\restrict_{\pi_\psi(\Afr)}$ with $G$, namely~(e).

When we consider Definition~\ref{def:tailC*} of $D_\psi$ and the identifications from Table~\ref{tab:cor} that have already been verified,
and taking into account that we have, from Proposition~\ref{prop:QSSfreeprod}, $D_\psi\subseteq\pi_\psi(\Afr)$, we find that the identification
of Hilbert spaces~\eqref{eq:L2s} yields an identification of $D_\psi$ with the smallest unital $C^*$-subalgebra $Z$ of $Y$ that satisfies
\begin{equation}\label{eq:ZG}
G(z_0\theta_{i_1}(a_1)z_1\cdots\theta_{i_n}(a_n)z_n)\in Z\text{ for all }n,i_1,\ldots,i_n\in\Nats,\,z_0,\ldots,z_n\in Z,\,a_j\in A,
\end{equation}
where $\theta_i$ is $\theta$ followed by the mapping from $B$ onto the $i$-th copy of $B$ in $Y$.
We aim to show $Z=D$.
By contrast, from the condition~\ref{def:VA}\eqref{item:Dsmallest}, we have that $D$ is the smallest unital $C^*$-subalgebra
of $Y$ that satisfies
\begin{equation}\label{eq:DG}
G(x_0\theta(a_1)x_1\cdots\theta(a_n)x_n)\in D\text{ for all }n\in\Nats,\,x_0,\ldots,x_n\in D,\,a_1,\ldots,a_n\in A.
\end{equation}
Since $G$ maps $Y$ into $D$, we have $Z\subseteq D$.
However, since the conditions in~\eqref{eq:DG} are more restrictive than those in~\eqref{eq:ZG}, (requiring all $i_j$ to be the same),
we have $D\subseteq Z$.
This proves the~(f).

The identification~(g) is apparent, because $B$ is generated by $\theta(A)\cup D$.

\medskip
We have shown that $V=(B,D,F,\theta,\rho)\in\VEu(A)$ yields $\psi=\psi_V\in\QSS(A)$ making the identifications of Table~\ref{tab:cor} hold.
In the reverse direction, Proposition~\ref{prop:QSSfreeprod} shows that given $\psi\in\QSS(A)$, there is $V=V_\psi=(B,D,F,\theta,\rho)$
so that $\psi=\psi_V$ as in the construction performed above
and where $D=D_\psi$, $B=C^*(D_\psi\cup\pi_\psi\circ\lambda_1(A))$, $\theta=\pi_\psi\circ\lambda_1$, $F=F_\psi\restrict_B$ and $\rho=\psi\restrict_{D_\psi}$.

We must show that the quintuple $V$ satisfies the conditions of Definition~\ref{def:VA}.
Once we have done so, we will be able to conclude that the map~\eqref{eq:bijVQ} is onto $\QSS(A)$.
We will also be able to conclude that it is injective, because by Proposition~\ref{prop:QSSfreeprod} and its proof and by the identifications
in Table~\ref{tab:cor},
if $\psi=\psi_{V'}$ for some $V'\in\VEu(A)$, then $V'$ is recovered as $V_\psi$.

Of the conditions in Definition~\ref{def:VA} all are clearly satisfied by $V_\psi$,
except perhaps~\eqref{item:Dsmallest}.
However, from Definition~\ref{def:tailC*}, we know that $D=D_\psi$ is the smallest unital $C^*$-subalgebra $Z$ of $\pi_\psi(\Afr)$ that satisfies
\begin{equation}\label{eq:ZF}
F_\psi(z_0\pi_{i_1}(a_1)z_1\cdots\pi_{i_n}(a_n)z_n)\in Z\text{ for all }n,i_1,\ldots,i_n\in\Nats,\,z_0,\ldots,z_n\in Z,\,a_j\in A,
\end{equation}
where $\pi_i=\pi_\psi\circ\lambda_i$,
and we must show that it is the smallest unital $C^*$-subalgebra $\Dt$ of $B$ that satifies
\begin{equation}\label{eq:DtF}
F(x_0\theta(a_1)x_1\cdots\theta(a_n)x_n)\in\Dt\text{ for all }n\in\Nats,\,x_0,\ldots,x_n\in\Dt,\,a_1,\ldots,a_n\in A.
\end{equation}
Clearly, $\Dt\subseteq D_\psi$.
To show the reverse inclusion, it will suffice to show that if we assume~\eqref{eq:ZF} holds when $i_1=i_2=\cdots=i_n$, then it also holds in full generality.
However, this follows by freeness of the algebras $\stalg(\pi_i(A)\cup D_\psi)$ with respect to $F_\psi$,
either by appeal to the moment-cumulant formula of Speicher~\cite{Sp98}, or by elementary considerations and arguing by induction on $n$.
\end{proof}

\begin{cor}\label{cor:tracial}
The bijection from~\eqref{eq:bijVQ} restricts to bijections
\begin{align}
\TVEu(A)&\to\TQSS(A) \label{eq:TVA} \\
\VEu(A,\phi)&\to\QSS(A,\phi) \label{eq:VAphi} \\
\TVEu(A,\tau)&\to\TQSS(A,\tau), \label{eq:TVAtau}
\end{align}
for a state $\phi$ on $A$ and a tracial state $\tau$ on $A$,
where
\begin{align*}
\TVEu(A)&=\{(B,D,F,\theta,\rho)\in\VEu(A)\mid\rho\circ F\text{ is a faithful trace on }B\} \\
\VEu(A,\phi)&=\{(B,D,F,\theta,\rho)\in\VEu(A)\mid\rho\circ F\circ\theta=\phi\} \\
\TVEu(A,\tau)&=\TVEu(A)\cap\VEu(A,\tau).
\end{align*}
\end{cor}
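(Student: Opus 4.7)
The plan is to verify each of the three restrictions separately by exploiting the explicit formula $\psi_V = \rho \circ G \circ (*_1^\infty\theta)$ from Theorem~\ref{thm:descr} together with the density (established in its proof) of $(*_1^\infty\theta)(\Afr)$ in $Y$.

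For~\eqref{eq:VAphi}, I would compute directly that for every $i$,
\[
\psi_V \circ \lambda_i \;=\; \rho \circ G\restrict_{B_i} \circ\, \theta_i \;=\; \rho \circ F \circ \theta,
\]
using that $G$ restricts to $F$ on the $i$-th copy $B_i$ of $B$ in $Y$. This quantity is independent of $i$, so $\psi_V \in \QSS(A,\phi)$ if and only if $\rho \circ F \circ \theta = \phi$, i.e., $V \in \VEu(A,\phi)$.

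For~\eqref{eq:TVA}, the forward direction proceeds by density: if $\psi_V$ is tracial on $\Afr$, then $\rho \circ G$ is tracial on the norm-dense subalgebra $(*_1^\infty\theta)(\Afr)$ of $Y$, hence tracial on all of $Y$, and restricting to $B = B_1$ shows $\rho \circ F$ is tracial. For faithfulness, the key observation is that for any tracial state $\tau$ on a $C^*$-algebra, the ideal $\{a : \tau(a^*a)=0\}$ equals $\ker \pi_\tau$; condition~\ref{def:VA}\eqref{item:fpGNSfaithful} combined with Proposition~\ref{prop:rhoFt} forces $\pi_{\rho \circ G}$ to be faithful on $Y$, so traciality of $\rho \circ G$ upgrades this to $\rho \circ G$ being faithful on $Y$, whence $\rho \circ F$ is faithful on $B$. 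For the reverse implication, given $V \in \TVEu(A)$, I would invoke the standard preservation of traciality under amalgamated $C^*$-algebraic free products: since $\rho \circ F$ is tracial on each copy of $(B,F)$, the free product state $\rho \circ G$ is tracial on $Y = (*_D)_{i=1}^\infty(B,F)$, and composing with $*_1^\infty\theta$ gives $\psi_V$ tracial on $\Afr$.

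The bijection~\eqref{eq:TVAtau} is the intersection of the conditions characterizing~\eqref{eq:TVA} and~\eqref{eq:VAphi}, so it follows at once by combining the two preceding arguments. The only input not already visible from Table~\ref{tab:cor} is the preservation of traciality under the amalgamated $C^*$-algebraic free product used in the reverse direction of~\eqref{eq:TVA}; this is a classical fact for amalgamated free products with $\rho$-compatible tracial states (cf.~\cite{VDN92}) and is the main, though standard, technical obstacle.
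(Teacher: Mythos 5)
Your proposal is correct and follows essentially the same route as the paper: one direction invokes the standard fact that an amalgamated free product of compatible traces is a trace, the other uses that $\psi_V$ tracial forces $\rho\circ G$ to be tracial on $Y$ and that a trace with faithful GNS representation is itself faithful (the paper phrases this for $\psih$ on $\Mcal_\psi$ rather than for $\rho\circ G$ on $Y$, an immaterial difference), and the $\phi$-compatibility in~\eqref{eq:VAphi} is the same direct computation $\psi_V\circ\lambda_i=\rho\circ F\circ\theta$.
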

\begin{proof}
To show that the map~\eqref{eq:TVA} is into $\TQSS(A)$, we use the fact that if
\[
(Y,G)=(*_D)_{i\in I}(B_i,F_i)
\]
is an amalgamated free product of von Neumann algebras and if there is a tracial state $\tau$ on $D$ 
so that for each $i\in I$, $\tau\circ F_i$ is a trace on $B_i$, then $\tau\circ G$ is a trace on $Y$.
This is a well known fact that is not difficult to verify;
it can be proved similarly to the case when $D=\Cpx$, (i.e., showing that the free product of traces is a trace);
for this latter fact see, for example,~\cite{VDN92}.

To see that the map~\eqref{eq:TVA} is onto $\TQSS(A)$, take $\psi\in\TQSS(A)$ and let $V=(B,D,F,\theta,\rho)\in\VEu(A)$
be such that $\psi=\psi_V$.
Then $\rho\circ F$ arises as the restriction of the trace $\psih$ on $\Mcal_\psi$ to a $C^*$-subalgebra identified with $B$
and is, therefore, a trace.
However, since $\psih$ has faithful GNS representation, being a trace, it must be faithful on $\Mcal_\psi$.
Thus, $\rho\circ F$ is a faithful trace, and $V\in\TVEu(A)$.

That the other maps~\eqref{eq:VAphi} and~\eqref{eq:TVAtau} are bijections, is now clear.
\end{proof}

Because traces whose GNS representations are faithful must themselves be faithful, we can use a von Neumann algebra
version of $\TVEu(A)$ to describe $TQSS(A)$.

\begin{defi}\label{def:TWA}
For a unital $C^*$-algebra $A$, let $\TWEu(A)$ be the set of all equivalence classes of quintuples
$W=(\Bc,\Dc,E,\theta,\rho)$ where
\begin{enumerate}[(i)]
\item $\Bc$ is a von Neumann algebra
\item $\Dc$ is a unital von Neumann subalgebra of $\Bc$,
\item $E:\Bc\to\Dc$ is a normal, faithful conditional expectation onto $\Dc$,
\item $\theta:A\to B$ is a unital $*$-homomorphism,
\item\label{item:ADcgen} $\theta(A)\cup\Dc$ generates $\Bc$ as a von Neumann algebra,
\item\label{item:Dcsmallest} $\Dc$ is the smallest unital von Neumann subalgebra of $\Bc$ that satisfies
\begin{equation}\label{eq:Edas}
E\big(x_0\theta(a_1)x_1\cdots\theta(a_n)x_n\big)\in\Dc
\end{equation}
whenever $n\in\Nats$, $x_0,\ldots,x_n\in\Dc$ and $a_1,\ldots,a_n\in A$,
\item $\tau$ is a normal faithful tracial state on $\Dc$, such that $\tau\circ E$ is a trace on $\Bc$.
\end{enumerate}
\end{defi}

\begin{thm}\label{thm:vNdescr}
There is a bijection
\begin{equation}\label{eq:bijTWQ}
\TWEu(A)\to\TQSS(A)
\end{equation}
that assigns to $W=(\Bc,\Dc,E,\theta,\tau)\in\TWEu(A)$ the tracial state $\psi=\psi_W$
given as follows:
letting 
\begin{equation}\label{eq:MHfp}
(\Mcal,H)=(*_\Dc)_{i=1}^\infty(\Bc,E)
\end{equation}
be the amalgamated free product of von Neumann algebras and letting $*_1^\infty\theta:\Afr\to\Mcal$ be the free product (arising from the univeral property) of the
homomorphisms from the copies of $A$ into the respective copies of $\Bc$, 
we set $\psi=\tau\circ H\circ(*_1^\infty\theta)$.

We have the following correspondence between objects defined in Section~\ref{sec:tails} and objects associated to $(\Mcal,H)$:
\begin{equation}\label{eq:vNtable}
\text{
\begin{tabular}{c|c}
from $(\Mcal,H)$ & from Section~\ref{sec:tails} \\ \hline\hline
\rule{0ex}{2.5ex}$L^2(\Mcal,\tau\circ H)$ & $L^2(\Afr,\psi)$ \\ \hline
\rule{0ex}{2.5ex}$\Mcal$ & $\Mcal_\psi$ \\ \hline
\rule{0ex}{2.5ex}$\Dc$ & $\Tc_\psi$ \\ \hline
\rule{0ex}{2.5ex}$\Bc_i$ & $W^*(\pi_\psi(\lambda_i(A))\cup\Tc_\psi)$ \\
\end{tabular}
}
\end{equation}
\medskip
\noindent
where $\Bc_i$ denotes the $i$-th copy of $\Bc$ in $\Mcal$.
\end{thm}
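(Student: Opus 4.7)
The plan is to reduce Theorem~\ref{thm:vNdescr} to the already-proven Corollary~\ref{cor:tracial} by exhibiting a natural bijection between $\TWEu(A)$ and $\TVEu(A)$ under which the associated tracial quantum symmetric states agree.

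First, I would construct the passage $\TVEu(A) \to \TWEu(A)$. Given $V = (B,D,F,\theta,\rho) \in \TVEu(A)$, the state $\rho \circ F$ is a faithful trace on $B$; let $\pi$ be its (faithful) GNS representation, and set $\Bc := \pi(B)''$, $\Dc := \pi(D)''$. The normal extension of $\rho \circ F$ to $\Bc$ is a normal faithful trace, so by Takesaki's theorem there is a normal faithful conditional expectation $E: \Bc \to \Dc$ extending $F$; composing $\theta$ with $\pi$ yields a unital $*$-homomorphism $\theta_W: A \to \Bc$, and $\rho$ extends to a normal faithful trace $\tau$ on $\Dc$. Define $W_V := (\Bc, \Dc, E, \theta_W, \tau)$. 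Condition (v) of Definition~\ref{def:TWA} is immediate from weak-operator density; traciality of $\tau \circ E$ is inherited from $\rho \circ F$.

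Second, I would construct the reverse passage $\TWEu(A) \to \TVEu(A)$. Given $W = (\Bc,\Dc,E,\theta,\tau) \in \TWEu(A)$, let $D$ be the smallest unital $C^*$-subalgebra of $\Dc$ that satisfies~\eqref{eq:Edas} (built recursively as in Observation~\ref{obs:Drecurse}), let $B := C^*(\theta(A) \cup D) \subseteq \Bc$, $F := E|_B$, and $\rho := \tau|_D$. Faithfulness of the trace $\tau \circ E$ on $\Bc$ restricts to faithfulness on $B$, so the GNS representation of $\rho \circ F$ is faithful; freeness of the amalgamated free product in the sense of condition~\ref{def:VA}\eqref{item:fpGNSfaithful} follows as in the proof of Theorem~\ref{thm:descr} via Proposition~\ref{prop:rhoFt}. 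Thus $V_W := (B,D,F,\theta,\rho) \in \TVEu(A)$.

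Third, I would show these two constructions are mutually inverse. The only nontrivial step is to verify $D'' = \Dc$ (so that $B'' = \Bc$ follows from condition~\eqref{item:ADcgen} of Definition~\ref{def:TWA}). The inclusion $D'' \subseteq \Dc$ is clear since $D \subseteq \Dc$. For the reverse, observe that $D''$ is a unital vN subalgebra of $\Dc$; by a Kaplansky density argument, for $x_0,\ldots,x_n \in D''$ we may approximate in strong-operator topology by bounded nets from $D$, and since $E$ is normal and $D$ satisfies the $C^*$-tail property, we get $E(x_0 \theta(a_1) x_1 \cdots \theta(a_n) x_n) \in D''$. Thus $D''$ satisfies the vN-tail property, and minimality of $\Dc$ in Definition~\ref{def:TWA}\eqref{item:Dcsmallest} forces $\Dc \subseteq D''$. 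Symmetrically, starting from $V$, the same density argument shows that $D''$ coincides with the smallest vN subalgebra of $\pi(B)''$ having the tail property, confirming $W_{V_W} = W$ and $V_{W_V} = V$ up to equivalence. Via Corollary~\ref{cor:tracial}, this gives the bijection~\eqref{eq:bijTWQ}, and the identifications~\eqref{eq:vNtable} are read off directly from Table~\ref{tab:cor} of Theorem~\ref{thm:descr} upon observing that $L^2(\Mcal,\tau \circ H) = L^2(Y,\rho \circ G) = L^2(\Afr,\psi)$ and that weak closure sends the $C^*$-objects of Table~\ref{tab:cor} to the corresponding vN objects.

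The main obstacle is the verification of the minimality condition~\eqref{item:Dcsmallest} in Definition~\ref{def:TWA} and its precise compatibility with the $C^*$-minimality condition~\ref{def:VA}\eqref{item:Dsmallest}; handled correctly, the Kaplansky density argument combined with normality of $E$ and the recursive construction of Observation~\ref{obs:Drecurse} shows $D'' = \Dc$, after which everything else follows formally from Theorem~\ref{thm:descr} and Corollary~\ref{cor:tracial}.
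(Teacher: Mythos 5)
Your proof is correct and follows essentially the same route as the paper's: both reduce to Corollary~\ref{cor:tracial} and pass from the $C^*$-picture to the von Neumann picture using faithfulness of the trace $\psih$ (a trace with faithful GNS representation is faithful). The paper's proof is a two-line appeal to Corollary~\ref{cor:tracial} and Proposition~\ref{prop:N=M}; you supply the details it leaves implicit---the explicit correspondence $\TVEu(A)\leftrightarrow\TWEu(A)$, the verification $D''=\Dc$ via Kaplansky density and normality of $E$, and Takesaki's theorem in place of Proposition~\ref{prop:N=M} for the normal conditional expectation---all of which are sound.
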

\begin{proof}
We use Corollary~\ref{cor:tracial}.
Since $\psih$ is a trace on $\Mcal_\psi$ whose GNS representation is faithful, $\psih$ itself must be faithful and Proposition~\ref{prop:N=M}
applies.
\end{proof}

\section{Extreme quantum symmetric states} % and tracial quantum symmetric states}
\label{sec:exq}

In this section, we characterize the extreme points of the compact convex set $\QSS(A)$ in terms of the corresponding elements
of $\VEu(A)$.

\begin{thm}\label{thm:extrQSS}
Let $\psi\in\QSS(A)$ and let $V=V_\psi=(B,D,F,\theta,\rho)\in\VEu(A)$ be the corresponding quintuple
under the bijection of Theorem~\ref{thm:descr}.
Then the following are equivalent:
\begin{enumerate}[(i)]
\item\label{it:rhopure} $\rho$ is a pure state of $D$,
\item\label{it:extSS} $\psi$ is an extreme point of $\SSt(A)$,
\item\label{it:extQSS} $\psi$ is an extreme point of $\QSS(A)$,
\end{enumerate}
\end{thm}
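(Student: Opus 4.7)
The plan is to prove the cycle (i)$\Rightarrow$(ii)$\Rightarrow$(iii)$\Rightarrow$(i); the first two implications will be essentially immediate from earlier results, and the third will constitute the main content.

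For (i)$\Rightarrow$(ii), I would simply invoke Proposition~\ref{prop:extSS}\eqref{it:pure=>extrSS}, which yields extremality of $\psi$ in the larger convex set $\SSt(A)$ directly from purity of $\rho$. The implication (ii)$\Rightarrow$(iii) follows from the inclusion $\QSS(A)\subseteq\SSt(A)$: any convex decomposition of $\psi$ within $\QSS(A)$ is \emph{a fortiori} a convex decomposition within $\SSt(A)$, so extremality in the larger set forces triviality of the decomposition.

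The substantive step is (iii)$\Rightarrow$(i), which I would prove by contrapositive. Assume $\rho$ is not pure and write $\rho=t\rho_1+(1-t)\rho_2$ with $0<t<1$ and $\rho_1\ne\rho_2$ both states of $D$. With $(Y,G)=(*_D)_{i=1}^\infty(B,F)$ the amalgamated free product and $\pi=*_1^\infty\theta:\Afr\to Y$ the induced free product $*$-homomorphism, set
\[
\psi_j:=\rho_j\circ G\circ\pi,\qquad j=1,2,
\]
so that $\psi=t\psi_1+(1-t)\psi_2$. Because the family $(B_i)_{i=1}^\infty$ of copies of $B$ inside $Y$ is free with amalgamation over $D$ with respect to $G$, and because $G\circ\pi\circ\lambda_i=F\circ\theta$ is independent of $i$, Proposition~\ref{prop:freeqsymm} gives $\psi_j\in\QSS(A)$. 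To verify $\psi_1\ne\psi_2$, I would invoke Proposition~\ref{prop:QSSfreeprod}, which (through the identifications in Table~\ref{tab:cor} of Theorem~\ref{thm:descr}) supplies the inclusion $D\subseteq\pi(\Afr)$: choosing $d\in D$ with $\rho_1(d)\ne\rho_2(d)$ and $a\in\Afr$ with $\pi(a)=d$, one computes $\psi_j(a)=\rho_j(G(d))=\rho_j(d)$, whence $\psi_1(a)\ne\psi_2(a)$. Thus $\psi$ fails to be extreme in $\QSS(A)$.

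I do not anticipate a serious technical obstacle: all the heavy lifting—the freeness-yields-quantum-symmetry result (Proposition~\ref{prop:freeqsymm}) and the fact that the tail $C^*$-algebra sits inside $\pi_\psi(\Afr)$ for quantum symmetric $\psi$ (Proposition~\ref{prop:QSSfreeprod})—is already accomplished. The one conceptual point worth emphasizing is that the free-product construction canonically promotes any convex decomposition of $\rho$ on the tail $C^*$-algebra $D$ to one of $\psi$ entirely within $\QSS(A)$; this is precisely what makes extremality in the \emph{smaller} set $\QSS(A)$ equivalent to extremality in the larger set $\SSt(A)$.
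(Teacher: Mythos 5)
Your proposal is correct and follows essentially the same route as the paper: (i)$\Rightarrow$(ii) via Proposition~\ref{prop:extSS}, (ii)$\Rightarrow$(iii) by the inclusion $\QSS(A)\subseteq\SSt(A)$, and (iii)$\Rightarrow$(i) by the contrapositive, promoting a nontrivial convex decomposition of $\rho$ to one of $\psi$ inside $\QSS(A)$ via Proposition~\ref{prop:freeqsymm} and distinguishing $\psi_1$ from $\psi_2$ using $D_\psi\subseteq\pi_\psi(\Afr)$ from Proposition~\ref{prop:QSSfreeprod}. The paper phrases this last step as an expansion of the proof of Proposition~\ref{prop:extSS}\eqref{it:extrSS=>pure} in the GNS picture with $F_\psi$, while you work in the amalgamated free product $(Y,G)$; these are identified by Theorem~\ref{thm:descr}, so the arguments coincide.
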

\begin{proof}
The implication \eqref{it:rhopure}$\implies$\eqref{it:extSS} is from Proposition~\ref{prop:extSS}, while \eqref{it:extSS}$\implies$\eqref{it:extQSS}
is trivially true.
To show \eqref{it:extQSS}$\implies$\eqref{it:rhopure}, we expand on the proof of Proposition~\ref{prop:extSS}\eqref{it:extrSS=>pure}.
If $\rho$ is not a pure state of $D$, then in the decomposition $\psi=\frac12(\psi_1+\psi_2)$ constructed there,
the states $\psi_1$ and $\psi_2$ are, in fact, quantum symmetric.
They are quantum symmetric by Proposition~\ref{prop:freeqsymm}, because the family $(D_\psi\cup\pi_\psi\circ\lambda_i(A))_{i=1}^\infty$
is free with respect to $F_\psi$.
\end{proof}

\begin{thm}\label{thm:extrTQSS}
Let $\psi\in\TQSS(A)$ and let $W=(\Bc,\Dc,E,\theta,\tau)\in\TWEu(A)$ be the quintuple corresponding to $\psi$ under the bijection
of Theorem~\ref{thm:vNdescr}.
Let $R(E)$ be the set of all normal tracial states $\tau$ of $D$ such that $\tau\circ E$ is a trace of $\Bc$.
Then $\psi$ is an extreme point of $\TQSS(A)$ if and only if $\tau$ is an extreme point of $R(E)$.
\end{thm}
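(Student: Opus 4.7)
The plan is to apply the bijection $\TWEu(A)\to\TQSS(A)$ of Theorem~\ref{thm:vNdescr} together with the direct-sum machinery of Lemma~\ref{lem:statesum}, proving both implications by contrapositive. As a preliminary, I note that the image $\Theta(\Afr)$ of $\Theta:=*_1^\infty\theta:\Afr\to\Mcal$ is WOT-dense in $\Mcal$: by Proposition~\ref{prop:QSSfreeprod} applied to $\psi$, we have $D_\psi\subseteq\pi_\psi(\Afr)=\Theta(\Afr)$, so $W^*(\Theta(\Afr))$ contains $\theta_i(A)$ for every $i$ and also contains $\Dc=W^*(D_\psi)$, hence all of $\Mcal$.

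For the forward direction, suppose $\tau=\tfrac12(\tau_1+\tau_2)$ with $\tau_1,\tau_2\in R(E)$ distinct, and set $\psi_j:=\tau_j\circ H\circ\Theta$. Each $\psi_j$ is tracial, by the free-product-of-traces fact invoked in the proof of Corollary~\ref{cor:tracial}, and quantum symmetric by Proposition~\ref{prop:freeqsymm} applied with $(\pi_i)=(\theta_i)$ and $\rho=\tau_j$, so $\psi_j\in\TQSS(A)$. If $\psi_1=\psi_2$, then the normal states $\tau_j\circ H$ agree on $\Theta(\Afr)$ and hence, by WOT-density, on all of $\Mcal$; restricting to $\Dc$ gives $\tau_1=\tau_2$, a contradiction. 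Hence $\psi=\tfrac12(\psi_1+\psi_2)$ witnesses that $\psi$ is not extreme in $\TQSS(A)$.

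For the reverse direction, suppose $\psi=\tfrac12(\psi_1+\psi_2)$ with $\psi_1,\psi_2\in\TQSS(A)$ distinct, and apply Lemma~\ref{lem:statesum} with $t=\tfrac12$ to obtain vector states $\chi_1,\chi_2$ on $\Mcal_\psi$ satisfying $\psih=\tfrac12(\chi_1+\chi_2)$ and $\chi_j\circ\pi_\psi=\psi_j$. Since $\chi_j$ arises from $\psih_j$ via the $*$-homomorphism $\Mcal_\psi\to\Mcal_{\psi_j}$ coming from~\eqref{eq:embM}, and since $\psih_j$ is tracial, $\chi_j$ is a normal trace on $\Mcal_\psi$. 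Define $\tau_j':=\chi_j\restrict_\Dc$, so $\tau=\tfrac12(\tau_1'+\tau_2')$. To verify $\tau_j'\in R(E)$, note that on $B_i=C^*(\pi_\psi(\lambda_i(A))\cup D_\psi)\subseteq\QEu_\psi$ equation~\eqref{eq:chirestrict} gives $\chi_j=\chi_j\circ F_\psi=\tau_j'\circ F_\psi$; extending by normality yields $\chi_j=\tau_j'\circ E$ on $\Bc_i\cong\Bc$, so $\tau_j'\circ E$ inherits the trace property from $\chi_j$. Finally, the same identity $\psi_j=\chi_j\circ\pi_\psi=\tau_j'\circ F_\psi\circ\pi_\psi=\tau_j'\circ H\circ\Theta$ combined with the WOT-density argument from the forward direction shows that $\psi_1\neq\psi_2$ forces $\tau_1'\neq\tau_2'$, so $\tau$ is not extreme in $R(E)$.

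The main technical point is showing $\tau_j'\circ E$ is a trace on $\Bc$ in the reverse direction; the crucial input is the identity~\eqref{eq:chirestrict}, which via normal extension from $B_i$ to $\Bc_i$ lets one realize $\chi_j\restrict_{\Bc_i}$ as $\tau_j'\circ E$ and thereby transfer the trace property from $\chi_j$ to $\tau_j'\circ E$.
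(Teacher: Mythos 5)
Your proof is correct and follows the paper's overall strategy: both implications by contrapositive, with Lemma~\ref{lem:statesum} supplying the normal traces $\chi_j$ in the reverse direction. That reverse direction is essentially identical to the paper's; the paper just invokes Proposition~\ref{prop:N=M} explicitly to produce the normal extension $E_\psi$ of $F_\psi$ that you use implicitly when you ``extend by normality'' from $B_i$ to $\Bc_i$ (legitimate here, since $\psih$ is a faithful trace). The forward direction is where you genuinely deviate. The paper produces $\psi_1,\psi_2$ by feeding the quintuples $(\Bc,\Dc,E,\theta,\tau_j)$ back through the bijection of Theorem~\ref{thm:vNdescr}; since membership in $\TWEu(A)$ requires the tracial state to be faithful, it must first replace $\tau_1,\tau_2$ by $(3\tau_1+\tau_2)/4$ and $(\tau_1+3\tau_2)/4$ (each dominates $\tau/2$ and is therefore faithful), and it concludes $\psi_1\ne\psi_2$ from injectivity of the bijection. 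You instead define $\psi_j=\tau_j\circ H\circ(*_1^\infty\theta)$ directly, check membership in $\TQSS(A)$ via Proposition~\ref{prop:freeqsymm} together with the free-product-of-traces fact, and derive $\psi_1\ne\psi_2$ from normality plus WOT-density of $\pi_\psi(\Afr)$ in $\Mcal$ (your detour through Proposition~\ref{prop:QSSfreeprod} for this density is unnecessary --- it holds by definition of $\Mcal_\psi$). This sidesteps the faithfulness adjustment entirely and is, if anything, slightly cleaner; the paper's route buys economy by reusing the already-established bijection wholesale.
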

\begin{proof}
The proof is quite similar to that of Theorem~\ref{thm:extrQSS}.
If $\tau$ is not an extreme point of $R(E)$, then there are 
distinct elements $\tau_1$ and $\tau_2$ of $R(E)$ such
$\tau=\frac12(\tau_1+\tau_2)$.
Replacing $\tau_1$ by $(3\tau_1+\tau_2)/4$ and $\tau_2$ by $(\tau_1+3\tau_2)/4$, if necessary,
we may without loss of generality assume that both $\tau_1\circ E$ and $\tau_2\circ E$ are faithful on $\Bc$.
Thus, we have $W_i:=(\Bc,\Dc,E,\sigma,\tau_i)\in\TWEu(A)$.
For $i=1,2$, letting $\psi_i\in\TQSS(A)$ be the state corresponding to $W_i$ under the bijection
from Theorem~\ref{thm:vNdescr}, we have $\psi=\frac12(\psi_1+\psi_2)$.
Since $W_1\ne W_2$, we have $\psi_1\ne\psi_2$, and $\psi$ is not an extreme point of $\TQSS(A)$.

Suppose $\psi$ is not an extreme point of $\TQSS(A)$.
By Lemma~\ref{lem:statesum}, there are normal states $\chi_i$ on $\Mcal_\psi$ satisfying~\eqref{eq:chirestrict},
\eqref{eq:chipsi} and~\eqref{eq:psihchi}, and density of the range of $\pi_\psi$ in $\Mcal_\psi$ implies
that $\chi_1$ and $\chi_2$ are traces.
By Proposition~\ref{prop:N=M}, $F_\psi$ has a (unique) normal extension $E_\psi$ to $\Mcal_\psi$.
Let $\tau_i$ be the restriction of $\chi_i$ to $\Tc_\psi=\Dc$.
Then $\tau=\frac12(\tau_1+\tau_2)$.
Since $\psi_j=\tau_j\circ F_\psi\circ\pi_\psi$, we have $\tau_1\ne\tau_2$.
Moreover, the restriction of $\tau_i\circ E_\psi$ to $\Bc_1$ (which is the state $\tau_i\circ E$ on $\Bc$)
is a trace.
Thus, $\tau_1,\tau_2\in R(E)$
and $\tau$ is not an extreme point of $R(E)$.
\end{proof}

\begin{remark}
In the above theorem, if either $\Bc$ or $\Dc$ is a factor, then $\psi$ is an extreme point of $\TQSS(A)$.
\end{remark}

Here is an example of an extreme tracial quantum symmetric state for $A=\Cpx\oplus\Cpx$ where the tail algebra
is noncommutative.
\begin{example}
We describe an element $(\Bc,\Dc,E,\theta,\tau)$ of $\TWEu(A)$ where $A$ is the two-dimensional $C^*$-algebra.
Let $\Bc=M_2(\Cpx)\oplus M_2(\Cpx)$, identified with $(\Cpx\oplus\Cpx)\otimes M_2(\Cpx)$, and let
$\Dc\subseteq\Bc$ be the copy of $M_2(\Cpx)$ identified with $1\otimes M_2(\Cpx)$.
Let $E:\Bc\to\Dc$ be the conditional expectation $E=\phi\otimes\id_{M_2(\Cpx)}$, where $\phi$ is the state on $\Cpx\oplus\Cpx$
given by $\phi(a\oplus b)=(2a+b)/3$.
Let $\theta:A\to\Bc$ be the unital $*$-homomorphism determined by $\theta(1\oplus0)=p$, where
\[
p=\begin{pmatrix}1&0\\0&0\end{pmatrix}\oplus
 \frac12\begin{pmatrix}1&1\\1&1\end{pmatrix}.
\]
Then
\[
E(p)=1\otimes\frac16\begin{pmatrix}5&1\\1&1\end{pmatrix}
\]
while
\[
p\,E(p)\,p=\frac16\begin{pmatrix}5&0\\0&0\end{pmatrix}\oplus\frac13\begin{pmatrix}1&1\\1&1\end{pmatrix},
\qquad
E(p\,E(p)\,p)=1\otimes\frac19\begin{pmatrix}6&1\\1&1\end{pmatrix}.
\]
Clearly, $E(p)$ and $E(p\,E(p)\,p)$ together generate $\Dc$ as an algebra, while $\Dc$ and $p$ together generate $\Bc$.
Thus, conditions~\eqref{item:ADcgen} and~\eqref{item:Dcsmallest} of Definition~\ref{def:TWA} are satisfied
and $(\Bc,\Dc,E,\theta,\tr_2)\in\VEu(A)$, where $\tr_2$ is the normalized trace on $\Dc$.
Since $\Dc\cong M_2(\Cpx)$ is a factor, this quintuple yields an extreme point of $\TQSS(A)$, by Theorem~\ref{thm:extrTQSS}.
\end{example}

Here is an example of an extreme tracial quantum symmetric state where neither $\Bc$ nor $\Dc$ in the corresponding
element of $\TWEu(A)$ is a factor.
\begin{example}
Let $A=\Cpx\oplus\Cpx$ with $q=1\oplus0\in A$, let $\Bc=M_2(\Cpx)\oplus M_2(\Cpx)$, let $\Dc\cong\Cpx\oplus\Cpx$
be the subalgebra
\begin{equation}\label{eq:summats}
\Dc=\left\{\begin{pmatrix}\lambda&0\\0&\mu\end{pmatrix}\oplus\begin{pmatrix}\lambda&0\\0&\mu\end{pmatrix}
\biggm|\lambda,\mu\in\Cpx\right\}
\end{equation}
of $\Bc$ and let $E:\Bc\to\Dc$ be the trace-preserving conditional expectation that sends
\[
\begin{pmatrix}a_{11}&a_{12}\\a_{21}&a_{22}\end{pmatrix}\oplus\begin{pmatrix}b_{11}&b_{12}\\b_{21}&b_{22}\end{pmatrix}
\]
to the element as indicated in~\eqref{eq:summats} with $\lambda=\frac12(a_{11}+b_{11})$ and $\mu=\frac12(a_{22}+b_{22})$.
Clearly, the unique state $\tau$ on $\Dc$ making $\tau\circ E$ a trace on $\Bc$ is the one assigning equal weights
of $1/2$ to the minimal projections of $\Dc$, and then $\tau\circ E$ is a faithful trace.
Let $\theta:A\to\Bc$ be the $*$-homomorphism that sends $q$ to
\[
\theta(q)=
\frac13\begin{pmatrix}1&\sqrt2\\\sqrt2&2\end{pmatrix}
\oplus
\frac14\begin{pmatrix}1&\sqrt3\\\sqrt3&3\end{pmatrix}.
\]
Then $E(\theta(A))=\Dc$, so condition~\eqref{item:Dcsmallest} of Definition~\ref{def:TWA} is fulfilled.
Now we easily see that $\Dc$ and $\theta(q)$ together generate $\Bc$, so condition~\eqref{item:ADcgen}
of Definition~\ref{def:TWA} is fulfilled.
Thus, the quintuple
$W=(\Bc,\Dc,E,\theta,\tau)$ belongs to $\TWEu(A)$
and, by Theorem~\ref{thm:extrTQSS}, the corresponding $\psi\in\TQSS(A)$ is an extreme point of $\TQSS(A)$.
\end{example}

St\o{}rmer~\cite{St69}
found that all of the extreme symmetric states on the minimal tensor product $C^*$-algebra $\bigotimes_1^\infty A$
are tensor product states $\otimes_1^\infty\phi$ for $\phi$ in the state space $S(A)$ of $A$.
The analogy of this in our setting
is the natural embedding of $S(A)$ into the set of extreme points of $\QSS(A)$, using the free product construction
(with amalgamation over the scalars) but the image of this embedding is, of course, far from being the set of
all the extreme points of $\QSS(A)$.
This situation is summarized in the following proposition.

Traditionally, when we take the reduced free product
\begin{equation}\label{eq:Afp}
(\Ac,\varphi)=\freeprod_{i=1}^\infty(A,\phi)
\end{equation}
of $C^*$-algebras, we insist that the GNS representation of $\phi$  be faithful,
which ensures that we have embeddings of $A$ into $\Ac$ (one for each $i\in\Nats$).
However, we will continue to use the notation~\eqref{eq:Afp} when the GNS representation of $\phi$
is not faithful, to mean the reduced free product
\[
(\Ac,\varphi)=\freeprod_{i=1}^\infty(\At,\phit)
\]
where $\At$ is the quotient of $A$ by the kernel $I_\phi$ of the GNS representation of $\phi$, and $\phit$ is the state on $\At$
whose composition with the quotient map yields the state $\phi$ on $A$.
Now instead of embeddings of $A$ into $\Ac$, we have $*$-homomorphisms $q_i:A\to\Ac$,
one for each $i\in\Nats$, from $A$ into $\Ac$, whose kernels are $I_\phi$.
For $\phi\in S(A)$, let $\freeprod_1^\infty\phi$ be the reduced free product state on $\Afr$, obtained by
taking the reduced free product $(\Ac,\varphi)=\freeprod_{i=1}^\infty(A,\phi)$ of $C^*$-algebras
and composing 
the canonical quotient map $\Afr\to\Ac$
(sending the $i$-th copy of $A$ in $\Afr$ into $\Ac$ by the $*$-homomorphism $q_i$),
with the free product state $\varphi$ on $\Ac$.

\begin{prop}\label{prop:fp}
The map $\phi\mapsto\freeprod_1^\infty\phi$ sends $S(A)$ onto the set of all states in $\QSS(A)$
whose tail $C^*$-algebras are copies of $\Cpx$.
The latter are extreme points of $\QSS(A)$.
Furthermore, the restriction of this map to the set $TS(A)$ of all tracial states of $A$, maps $TS(A)$
onto the set of all traces in $\TQSS(A)$ whose tail algebras are copies of $\Cpx$, and the latter are extreme
points of $\TQSS(A)$.
\end{prop}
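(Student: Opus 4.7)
The plan is to combine the classification of Theorem~\ref{thm:descr} (and its tracial counterpart Theorem~\ref{thm:vNdescr}) with the extremality criteria of Theorems~\ref{thm:extrQSS} and~\ref{thm:extrTQSS}, specialized to the case where the tail $C^*$-algebra is $\Cpx 1$.

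For the forward direction, I would assign to each $\phi\in S(A)$ the quintuple $V_\phi:=(\At,\Cpx 1,\phit,q,\id_\Cpx)\in\VEu(A)$, where $q:A\to\At$ is the canonical quotient and $\phit$ is the induced state on $\At$. Each axiom of Definition~\ref{def:VA} is either immediate or trivial: $\phit$ has faithful GNS on $\At$ by construction; $q(A)\cup\Cpx 1=\At$ generates $\At$; the only unital $C^*$-subalgebra of $\Cpx 1$ is itself, so the minimality axiom is automatic; and condition~\eqref{item:fpGNSfaithful} is vacuous on the one-dimensional algebra $\Cpx 1$. Since amalgamation over $\Cpx 1$ is exactly the reduced free product, Theorem~\ref{thm:descr} identifies $\psi_{V_\phi}$ with $\freeprod_1^\infty\phi$ and gives $D_{\psi_{V_\phi}}=\Cpx 1$.

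For surjectivity onto states with tail $C^*$-algebra $\Cpx 1$, take $\psi\in\QSS(A)$ with $D_\psi=\Cpx 1$ and let $V_\psi=(B,D,F,\theta,\rho)$ be the quintuple from Theorem~\ref{thm:descr}. Then $D=\Cpx 1$, so $F$ is a state on $B$ with faithful GNS representation, and the generation axiom forces $\theta(A)=B$. Setting $\phi:=F\circ\theta\in S(A)$, a brief argument using faithfulness of $F$ on $B$ shows that $\ker\theta$ equals the kernel of the GNS of $\phi$, so $\theta$ factors as an isomorphism $\At\cong B$ intertwining $\phit$ with $F$; this identification yields $\psi=\freeprod_1^\infty\phi$. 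Extremality in $\QSS(A)$ is then immediate from Theorem~\ref{thm:extrQSS}, since the unique state on the one-dimensional algebra $\Cpx 1$ is pure.

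The tracial statement follows by restriction. For $\phi\in TS(A)$, $\phit$ is a trace, and the reduced free product of traces is a trace (as recalled in the proof of Corollary~\ref{cor:tracial}), so $\freeprod_1^\infty\phi\in\TQSS(A)$. Conversely, when $\psi\in\TQSS(A)$ has tail $C^*$-algebra $\Cpx 1$, the isomorphism $\At\cong B$ from the previous paragraph forces $\phit$, hence $\phi$, to be tracial. For extremality, Theorem~\ref{thm:vNdescr} identifies the von Neumann version $\Dc$ of the tail with $W^*(D_\psi)=\Cpx 1$, so the set $R(E)$ of normal tracial states on $\Dc$ is a singleton and $\tau$ is trivially extreme in it; Theorem~\ref{thm:extrTQSS} then concludes. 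The argument is conceptually routine; the only genuine care needed is the handling of possibly non-faithful $\phi$, which is already absorbed into the definition of $\freeprod_1^\infty\phi$ via the passage to $\At$.
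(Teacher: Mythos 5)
Your proposal is correct and follows essentially the same route as the paper: the paper's own proof is a one-line appeal to the details of Theorem~\ref{thm:descr} (plus the fact that a free product of traces is a trace), and what you have written is precisely the expansion of that appeal — identifying $\freeprod_1^\infty\phi$ with the state $\psi_V$ for $V=(\At,\Cpx 1,\phit,q,\id_\Cpx)$, reading off $D_\psi=\Cpx 1$ from Table~\ref{tab:cor}, and invoking Theorems~\ref{thm:extrQSS} and~\ref{thm:extrTQSS} with the observation that the unique state on $\Cpx$ is pure (and that $R(E)$ is a singleton). Your handling of the non-faithful case via $\ker\theta=I_\phi$ is exactly the point the paper leaves implicit, and it is argued correctly.
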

\begin{proof}
The first two sentences follow easily from the details of the proof of
Theorem~\ref{thm:descr}.
The last sentence follows from the first assertion and the well known fact that the free product of traces is a trace.
\end{proof}

Another embedding of $S(A)$ into $\QSS(A)$ is the affine embedding that corresponds to taking ``maximal amalgamation,'' as follows.
Let $\qchk:\Afr\to A$ be the quotient map resulting from the univeral property
that sends each copy of $A$ in $\Afr$ identically to $A$.
Given $\phi\in S(A)$, 
let $\phichk$ be the state on $\Afr$ that is $\phichk=\phi\circ\qchk$.

\begin{prop}\label{prop:amalgaA}
The map $\phi\mapsto\phichk$ is an affine embedding of $S(A)$ into $\QSS(A)$
and, $\phichk$ is an extreme element of $\QSS(A)$ if and only if $\phi$ is a pure state of $A$.
The restriction of this map to the tracial state space $TS(A)$ yields an affine embedding of $TS(A)$ into $\TQSS(A)$
and for $\tau\in TS(A)$, $\tauchk$ is an extreme point of $\TQSS(A)$ if and only if $\tau$ is an extreme point of $TS(A)$.
\end{prop}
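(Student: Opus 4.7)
First I would verify $\phichk\in\QSS(A)$. Classical symmetry is immediate from $\qchk\circ\beta_\sigma=\qchk$. For quantum symmetry, using $\qchk\circ\lambda_j=\id_A$, a direct expansion yields
\[
(\id\otimes\phichk)(\pit_{i(1)}(a_1)\cdots\pit_{i(n)}(a_n))
=\phi(a_1\cdots a_n)\sum_{j(1),\ldots,j(n)=1}^k u_{i(1),j(1)}\cdots u_{i(n),j(n)},
\]
and iterating the row-sum relation $\sum_j u_{i,j}=1$ from the rightmost index inward collapses the sum to $\phi(a_1\cdots a_n)\cdot 1=\phichk(\lambda_{i(1)}(a_1)\cdots\lambda_{i(n)}(a_n))\cdot 1$, confirming~\eqref{eq:phipi}. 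That $\phi\mapsto\phichk$ is affine is immediate from its definition, and it is injective because $\phichk\circ\lambda_1=\phi$. For $\tau\in TS(A)$, the state $\tauchk$ is a trace since $\qchk$ is a $*$-homomorphism, giving the analogous embedding $TS(A)\hookrightarrow\TQSS(A)$.

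Next I would identify the quintuple $(B,D,F,\theta,\rho)\in\VEu(A)$ assigned to $\phichk$ by Theorem~\ref{thm:descr}. Since $\pi_{\phichk}$ factors as $\pi_\phi\circ\qchk$, its image is $\pi_\phi(A)$ and the Hilbert space $L^2(\Afr,\phichk)$ is canonically $L^2(A,\phi)$. Therefore $\pi_{\phichk}(\lambda_i(A))=\pi_\phi(A)$ for every $i$, giving $\Tc_{\phichk}=\pi_\phi(A)''$. Since $F_{\phichk}$ restricts to the identity on $\Tc_{\phichk}$ and $\pi_\phi(A)\subseteq\Tc_{\phichk}$, the recursion of Observation~\ref{obs:Drecurse} stabilizes immediately at $D_{\phichk,1}=\pi_\phi(A)$, so $D_{\phichk}=\pi_\phi(A)$. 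This forces $B=D=\pi_\phi(A)$ and $F=\id$, and the quintuple is $(\pi_\phi(A),\pi_\phi(A),\id,\pi_\phi,\rho_\phi)$ with $\rho_\phi\circ\pi_\phi=\phi$. An analogous computation via Theorem~\ref{thm:vNdescr} gives $(\pi_\tau(A)'',\pi_\tau(A)'',\id,\pi_\tau,\tilde\tau)\in\TWEu(A)$, where $\tilde\tau$ is the faithful normal trace on $\pi_\tau(A)''$ extending $\tau$ (faithfulness coming from $\ker\pi_\tau=N_\tau$, which holds for traces).

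Finally, Theorem~\ref{thm:extrQSS} reduces $\phichk$-extremality to purity of $\rho_\phi$ on $\pi_\phi(A)$. The key auxiliary fact is that $\rho_\phi$ is pure iff $\phi$ is pure: if $\phi$ is pure, then surjectivity of $\pi_\phi$ pulls back any decomposition of $\rho_\phi$ to a decomposition of $\phi$; conversely, given $\phi=\tfrac12(\phi_1+\phi_2)$, each $\phi_j\le 2\phi$ must vanish on the hereditary ideal $\ker\pi_\phi$ (since positive $a\in\ker\pi_\phi$ satisfy $\phi(a)=\|\pi_\phi(a^{1/2})\oneh\|^2=0$, so $\phi_j(a)=0$ by dominance), whence $\phi_j$ factors through $\pi_\phi$ and yields a nontrivial decomposition of $\rho_\phi$. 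For the tracial case, Theorem~\ref{thm:extrTQSS} reduces $\tauchk$-extremality to $\tilde\tau$ being extreme in $R(\id)$, which is the full set of normal tracial states on $\pi_\tau(A)''$; since $\tilde\tau$ is faithful, this is equivalent to $\pi_\tau(A)''$ being a factor, the classical criterion for $\tau$ to be an extreme (factorial) trace on $A$. The main technical care to be exercised is in the identification $D_{\phichk}=\pi_\phi(A)$ via Observation~\ref{obs:Drecurse}, since an incautious reading of the recursion risks circularity with the definition of the tail $C^*$-algebra.
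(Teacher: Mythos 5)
Your proof is correct and follows essentially the same route as the paper: identify the quintuple attached to $\phichk$ under the bijection of Theorem~\ref{thm:descr} and then invoke Theorems~\ref{thm:extrQSS} and~\ref{thm:extrTQSS}. In fact your version is more careful than the paper's one-paragraph proof, which simply declares $B=D=A$, $\theta=\id$, $\rho=\phi$ and stops: you correctly replace $A$ by $\pi_\phi(A)$ to cover the case of a non-faithful GNS representation, supply the needed bridge that $\rho_\phi$ is pure on $\pi_\phi(A)$ if and only if $\phi$ is pure on $A$ (the hereditary-ideal/dominance argument is exactly right), and actually carry out the tracial half of the statement via factoriality of $\pi_\tau(A)''$, which the paper's proof omits entirely.
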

\begin{proof}
That $\phi\mapsto\phichk$ is an affine embedding is clear.
Thus, if $\phi$ is not a pure state of $A$, then $\phichk$ is not an extreme point in $\QSS(A)$.
The element $V=(B,D,F,\theta,\rho)\in\VEu(A)$ corresponding to the quantum symmetric state $\phichk$ has
\renewcommand{\labelitemi}{$\bullet$}
\begin{itemize}
\item $B=A$
\item $D=B$,
\item $F$ the identity map,
\item $\theta$ the identity map
\item $\rho=\phi$.
\end{itemize}
Thus, $\rho$ is extreme if and only if $\phi$ is pure.
\end{proof}

The tracial state space of a unital $C^*$-algebra, if nonempty, forms a Choquet simplex
(see, for example, Theorem 3.1.18 of~\cite{Sa71}).
So the following question seems natural.
\begin{ques}
Is $\TQSS(A)$ a Choquet simplex for every unital $C^*$-algebra $A$ that has a tracial state?
\end{ques}

We also have the following results, whose proofs are similar to those of Theorems~\ref{thm:extrQSS} and~\ref{thm:extrTQSS}.

\begin{thm}\label{thm:extrQSSphi}
Let $\phi$ be a state on $A$, let
$\psi\in\QSS(A,\phi)$ and let $v=(B,D,F,\theta,\rho)\in\VEu(A,\phi)$ be the quintuple corresponding to $\psi$
under the bijection of Corollary~\ref{cor:tracial}.
Let $S(F,\phi)$ be the set of all states $\eta$ on $D$ so that $\eta\circ F\circ\theta=\phi$.
Then $\psi$ is an extreme point of $\QSS(A,\phi)$ if and only if $\rho$ is an extreme point of $S(F,\phi)$.
\end{thm}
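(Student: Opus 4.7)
The plan is to prove both implications by contraposition, closely paralleling the proofs of Theorems~\ref{thm:extrQSS} and~\ref{thm:extrTQSS}, and using freely the identifications of Table~\ref{tab:cor}. For the direction ``$\rho$ non-extreme $\Rightarrow$ $\psi$ non-extreme,'' suppose $\rho = t\rho_1 + (1-t)\rho_2$ with $0 < t < 1$ and distinct $\rho_1,\rho_2 \in S(F,\phi)$. Using the amalgamated free product $(Y,G) = (*_D)_{i=1}^\infty(B,F)$ from~\eqref{eq:YGfp} and the free product homomorphism $*_1^\infty\theta:\Afr\to Y$, I define $\psi_i := \rho_i \circ G \circ (*_1^\infty\theta)$. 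Proposition~\ref{prop:freeqsymm}, applied with $E = G$ and with the free subalgebras $\theta_i(A) \subseteq B_i$, shows each $\psi_i$ is quantum symmetric; the hypothesis $\rho_i \circ F \circ \theta = \phi$ then gives $\psi_i \circ \lambda_j = \phi$ for all $j$, placing $\psi_i \in \QSS(A,\phi)$. Obviously $\psi = t\psi_1 + (1-t)\psi_2$. For $\psi_1 \ne \psi_2$: since $(*_1^\infty\theta)(\Afr)$ is norm-dense in $Y$ (as in the proof of Theorem~\ref{thm:descr}(b)), an equality $\psi_1 = \psi_2$ would force $\rho_1 \circ G = \rho_2 \circ G$ on all of $Y$, and restricting to $D$ would give $\rho_1 = \rho_2$, a contradiction.

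Conversely, suppose $\psi = t\psi_1 + (1-t)\psi_2$ with $0<t<1$ and distinct $\psi_1,\psi_2 \in \QSS(A,\phi)$. Applying Lemma~\ref{lem:statesum} produces states $\chi_1,\chi_2$ on $\Mcal_\psi$ satisfying \eqref{eq:chirestrict}, \eqref{eq:chipsi} and \eqref{eq:psihchi}. I set $\rho_i := \chi_i\restrict_{D_\psi}$, identified with a state on $D$ via Table~\ref{tab:cor}; then \eqref{eq:psihchi} immediately yields $\rho = t\rho_1 + (1-t)\rho_2$. To check $\rho_i \in S(F,\phi)$, for each $a \in A$ I compute, using \eqref{eq:chirestrict} and then \eqref{eq:chipsi},
\[
\rho_i\bigl(F_\psi(\pi_\psi(\lambda_1(a)))\bigr) = \chi_i\bigl(\pi_\psi(\lambda_1(a))\bigr) = \psi_i(\lambda_1(a)) = \phi(a),
\]
which via the identifications translates exactly to $\rho_i \circ F \circ \theta = \phi$. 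Finally, to see $\rho_1 \ne \rho_2$: if they agreed, then by \eqref{eq:chirestrict} combined with $F_\psi(\pi_\psi(\Afr)) \subseteq D_\psi$ from Proposition~\ref{prop:QSSfreeprod}, $\chi_1$ and $\chi_2$ would agree on $\pi_\psi(\Afr)$, giving $\psi_1 = \psi_2$ by \eqref{eq:chipsi}, a contradiction.

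The main obstacle I anticipate is purely the bookkeeping through the identifications of Table~\ref{tab:cor}, so that intrinsic computations performed on the $\Mcal_\psi$ side genuinely deliver the condition $\rho_i \circ F \circ \theta = \phi$ on the abstract $(B,D,F,\theta)$-side. Once that translation is set up, the argument rests entirely on Proposition~\ref{prop:freeqsymm}, Lemma~\ref{lem:statesum}, and the machinery of Section~\ref{sec:descr}, and it closely mirrors the proof of Theorem~\ref{thm:extrTQSS}.
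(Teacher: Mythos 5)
Your proposal is correct and follows exactly the route the paper intends: the paper gives no separate proof of Theorem~\ref{thm:extrQSSphi}, stating only that it is proved like Theorems~\ref{thm:extrQSS} and~\ref{thm:extrTQSS}, and your argument is a faithful elaboration of that pattern (Proposition~\ref{prop:freeqsymm} for one direction, Lemma~\ref{lem:statesum} together with the identifications of Table~\ref{tab:cor} for the other). The marginal-state bookkeeping $\rho_i\circ F\circ\theta=\phi$ and the two injectivity checks are handled correctly.
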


\begin{thm}\label{thm:extrTQSStau}
Let $\tau$ be a tracial state on $A$, let
$\psi\in\TQSS(A,\tau)$ and let $W=(\Bc,\Dc,E,\theta,\rho)\in\TWEu(A,\tau)$ be the quintuple corresponding to $\psi$
under the bijection of Theorem~\ref{thm:extrTQSS}.
Let $R(E,\tau)$ be the set of all normal, tracial states $\eta$ on $\Dc$ so that $\eta\circ E$ is a trace on $\Bc$ and
$\eta\circ E\circ\theta=\tau$.
Then $\psi$ is an extreme point of $\TQSS(A,\tau)$ if and only if $\rho$ is an extreme point of $R(E,\tau)$.
\end{thm}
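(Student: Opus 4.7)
The plan is to adapt the proof of Theorem~\ref{thm:extrTQSS} to the relativized setting where the state on $A$ is pinned to be $\tau$. Both directions will be proved by contrapositive, and the key new feature is simply that everything must be checked to respect the constraint $\eta \circ E \circ \theta = \tau$.

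For the ``if'' direction, assume $\rho$ is not extreme in $R(E,\tau)$ and write $\rho=\tfrac{1}{2}(\rho_{1}+\rho_{2})$ with distinct $\rho_1,\rho_2\in R(E,\tau)$. To ensure the resulting quintuples lie in $\TWEu(A,\tau)$ (in particular that $\rho_i\circ E$ is a faithful trace on $\Bc$, as required by Definition~\ref{def:TWA}), I would replace $\rho_{1}$ and $\rho_{2}$ by $(3\rho_{1}+\rho_{2})/4$ and $(\rho_{1}+3\rho_{2})/4$, exactly as in the proof of Theorem~\ref{thm:extrTQSS}; these remain distinct convex combinations giving back $\rho$, they are still in $R(E,\tau)$ since $R(E,\tau)$ is affine, and $\rho_i\circ E$ is now dominated by a multiple of the faithful trace $\rho\circ E$. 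Then $W_{i}=(\Bc,\Dc,E,\theta,\rho_{i})\in\TWEu(A,\tau)$ because $\rho_{i}\circ E\circ\theta=\tau$ by construction. Letting $\psi_{i}\in\TQSS(A,\tau)$ be the tracial quantum symmetric state corresponding to $W_{i}$ under the bijection of Corollary~\ref{cor:tracial} (or equivalently Theorem~\ref{thm:vNdescr}), linearity of the construction $W\mapsto\psi_{W}$ in the state variable gives $\psi=\tfrac{1}{2}(\psi_{1}+\psi_{2})$, while injectivity of the bijection forces $\psi_{1}\neq\psi_{2}$. Hence $\psi$ is not extreme in $\TQSS(A,\tau)$.

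For the converse, suppose $\psi=\tfrac{1}{2}(\psi_{1}+\psi_{2})$ with distinct $\psi_{1},\psi_{2}\in\TQSS(A,\tau)$. Invoke Lemma~\ref{lem:statesum} to produce normal states $\chi_{1},\chi_{2}$ on $\Mcal_{\psi}$ satisfying \eqref{eq:chirestrict}--\eqref{eq:psihchi}; weak density of $\pi_{\psi}(\Afr)$ in $\Mcal_{\psi}$ together with traciality of each $\psi_i$ implies the $\chi_{i}$ are traces. Because $\psih$ is a tracial state with faithful GNS representation on $\Mcal_{\psi}$, it is itself faithful, so Proposition~\ref{prop:N=M}\eqref{it:N=MbyT} provides a normal conditional expectation $E_{\psi}\colon\Mcal_{\psi}\to\Tc_{\psi}$ extending $F_{\psi}$. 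Set $\rho_{i}=\chi_{i}\restrict_{\Tc_{\psi}}$ under the identification $\Dc=\Tc_{\psi}$. Then $\rho=\tfrac{1}{2}(\rho_{1}+\rho_{2})$; the identity $\chi_{i}=\chi_{i}\circ E_{\psi}$ (obtained from \eqref{eq:chirestrict} by normality and density) yields $\psi_{i}=\rho_{i}\circ E_{\psi}\circ\pi_{\psi}$, so $\psi_{1}\neq\psi_{2}$ forces $\rho_{1}\neq\rho_{2}$. Finally, restricting $\chi_{i}=\chi_{i}\circ E_{\psi}$ to the copy $\Bc\cong\Bc_{1}\subseteq\Mcal_{\psi}$ (from the table in Theorem~\ref{thm:vNdescr}) identifies $\chi_{i}\restrict_{\Bc}$ with $\rho_{i}\circ E$, so $\rho_{i}\circ E$ is a trace on $\Bc$, and $\rho_{i}\circ E\circ\theta=\chi_{i}\circ\pi_{\psi}\circ\lambda_{1}=\psi_{i}\circ\lambda_{1}=\tau$; thus $\rho_{i}\in R(E,\tau)$ and $\rho$ is not extreme.

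The main obstacle, as in Theorem~\ref{thm:extrTQSS}, is not any single hard step but rather the careful bookkeeping required to translate between the ``external'' picture (convex combinations on the level of $\TQSS(A,\tau)$, together with the perturbed vector states $\chi_{i}$ of Lemma~\ref{lem:statesum}) and the ``internal'' picture (convex combinations of $\rho$ in $R(E,\tau)$, with all traciality and $\tau$-preservation constraints respected). The new wrinkle relative to Theorem~\ref{thm:extrTQSS} is verifying the normalization condition $\rho_i\circ E\circ\theta=\tau$, which follows immediately from $\psi_i\in\QSS(A,\tau)$, so no genuinely new ideas are needed beyond the techniques already deployed.
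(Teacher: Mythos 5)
Your proposal is correct and follows essentially the same route the paper intends: the paper gives no separate proof of Theorem~\ref{thm:extrTQSStau}, stating only that it is proved like Theorems~\ref{thm:extrQSS} and~\ref{thm:extrTQSS}, and your argument is exactly that adaptation, with the only new ingredient being the (routine) verification that the normalization $\eta\circ E\circ\theta=\tau$ is preserved in both directions. One wording slip: in justifying faithfulness of $\rho_i'\circ E$ after the perturbation, the relevant point is that $(3\rho_1+\rho_2)/4=\tfrac12\rho_1+\tfrac12\rho$ \emph{dominates} $\tfrac12\rho$, hence $\rho_i'\circ E\ge\tfrac12\,\rho\circ E$ is faithful -- being dominated by a multiple of a faithful trace would not suffice.
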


\section{Central quantum symmetric states}
\label{sec:ZQSS}

\begin{defi}
Let $\psi\in\QSS(A)$ be a quantum symmetric state on $\Afr=*_1^\infty A$.
With reference to Definition~\ref{def:tail},
we say that $\psi$ is {\em central} if the tail algebra of $\psi$ lies in the center of  $\Mcal_\psi$.
We let $\ZQSS(A)$ denote the set of all central quantum symmetric states on $\Afr$.
The tracial central quantum symmetric states are those in the set $\ZTQSS(A):=\TQSS(A)\cap\ZQSS(A)$.
\end{defi}

With reference to Proposition~\ref{prop:fp}, we see that for every $\phi\in S(A)$, the free product state $*_1^\infty\phi$
is a central quantum symmetric state.
The next result is related to Proposition~4.7 of~\cite{DK}, though it is not precisely a generalization of it
(since the states considered in Proposition~4.7 of~\cite{DK}, including the free product states, are faithful).

\begin{thm}\label{thm:ZQSS}
The sets $\ZQSS(A)$ and $\ZTQSS(A)$
are compact, convex subsets of $\QSS(A)$ and and both are Choquet simplices.
Their extreme points are the free product states and free product tracial states, respectively:
\begin{align}
\partial_e(\ZQSS(A))&=\{*_1^\infty\phi\mid \phi\in S(A)\}, \label{eq:extrZQSSA} \\
\partial_e(\ZTQSS(A))&=\{*_1^\infty\tau\mid \tau\in TS(A)\}. \label{eq:extrZTQSSA} 
\end{align}
\end{thm}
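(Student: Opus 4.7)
The plan is to realize $\ZQSS(A)$ as a Bauer simplex by identifying it affinely with the compact convex set $\mathrm{Prob}(S(A))$ of Radon probability measures on $S(A)$, via the barycenter map
\[
\Phi:\mathrm{Prob}(S(A))\to\ZQSS(A),\qquad\Phi(\nu)(w)=\int_{S(A)}\big(\freeprod_{i=1}^\infty\phi\big)(w)\,d\nu(\phi).
\]
Since $\mathrm{Prob}(S(A))$ is a Bauer simplex whose extreme points are the Dirac masses, an affine homeomorphism with $\ZQSS(A)$ will deliver all conclusions; the tracial case $\ZTQSS(A)$ will follow by restricting to $\mathrm{Prob}(TS(A))$.

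First I would verify compactness and convexity of $\ZQSS(A)$. Using Theorem~\ref{thm:E} and the shift $\alpha:\lambda_i\mapsto\lambda_{i+1}$ on $\Afr$, centrality $\Tc_\psi\subseteq Z(\Mcal_\psi)$ is equivalent to the condition that for all $y_0,v,w\in\Afr_0$, $i\in\Nats$ and $a\in A$, $\psi(w^*\alpha^n(y_0)\lambda_i(a)v)=\psi(w^*\lambda_i(a)\alpha^n(y_0)v)$ holds for all sufficiently large $n$. By the exchangeability/shift argument of Remark~\ref{rem:WOTconst}, both sides stabilize in $n$ once $n$ exceeds a threshold that depends only on $w,v,y_0,i$ (and not on $\psi$), so the stabilized equality passes to weak-$*$ limits, and $\ZQSS(A)$ is closed. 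Convexity follows from Lemma~\ref{lem:statesum}: the embedding $\Tc_\psi\hookrightarrow\Tc_{\psi_1}\oplus\Tc_{\psi_2}$ places the tail algebra of a convex combination into the center of $\Mcal_{\psi_1}\oplus\Mcal_{\psi_2}$ whenever each summand is central.

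Next I would prove surjectivity of $\Phi$ by disintegration. Given $\psi\in\ZQSS(A)$ with quintuple $V=(B,D,F,\theta,\rho)\in\VEu(A)$ from Theorem~\ref{thm:descr}, centrality forces $D\subseteq Z(B)$, so $D$ is abelian; write $D=C(X)$ with $\rho$ corresponding to a Radon measure $\mu$ on $X$. For each $x\in X$, set $F_x=\ev_x\circ F\in S(B)$ and $\phi_x=F_x\circ\theta\in S(A)$; the assignment $x\mapsto\phi_x$ is continuous. A fiberwise application of the $D$-valued moment--cumulant formula used in the proof of Proposition~\ref{prop:freeqsymm}, where centrality of $D$ ensures that evaluation at $x$ commutes with cumulant computation and collapses each $D$-valued cumulant to the corresponding scalar cumulant of the fiber data, yields
\[
\psi(w)=\int_X\big(\freeprod_{i=1}^\infty\phi_x\big)(w)\,d\mu(x)\qquad(w\in\Afr),
\]
so $\psi=\Phi(\nu_\psi)$ where $\nu_\psi=(x\mapsto\phi_x)_*\mu$. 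Injectivity of $\Phi$ follows from Stone--Weierstrass: each $f_w:\phi\mapsto(\freeprod_{i=1}^\infty\phi)(w)$ is continuous on $S(A)$, $f_{\lambda_1(a)}(\phi)=\phi(a)$ separates points, $f_{w^*}=\overline{f_w}$, and for $w,w'\in\Afr$ choosing $N$ so that $w$ and $\alpha^N(w')$ involve disjoint copies of $A$ gives $f_w\cdot f_{w'}=f_{w\,\alpha^N(w')}$ via the freeness of the copies under the shift-invariant free product state. Hence $\{f_w\}$ is a unital $*$-subalgebra of $C(S(A))$ separating points, so uniformly dense, whence $\Phi$ is injective. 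Being then an affine, weak-$*$-continuous bijection between compacta, $\Phi$ is an affine homeomorphism, and the Bauer simplex structure of $\mathrm{Prob}(S(A))$ transfers to $\ZQSS(A)$, identifying its extreme points with $\{\freeprod_{i=1}^\infty\phi:\phi\in S(A)\}$ and yielding~\eqref{eq:extrZQSSA}.

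The tracial version proceeds identically, but using Theorem~\ref{thm:vNdescr} in place of Theorem~\ref{thm:descr}: the disintegrated fiber states $\phi_x$ are tracial because $\tau\circ E$ is a trace (a trace on a $C(X)$-central algebra disintegrates to a field of tracial states), and $\freeprod_{i=1}^\infty\phi$ is a trace iff $\phi$ is, so $\Phi$ restricts to an affine homeomorphism $\mathrm{Prob}(TS(A))\to\ZTQSS(A)$, yielding~\eqref{eq:extrZTQSSA}. The main obstacle I anticipate is the fiberwise identification in the disintegration step: rigorously showing that the amalgamated free product over the central abelian $D$ disintegrates so that $G$ evaluated at $x$ coincides with the scalar-valued free product state $\freeprod_{i=1}^\infty F_x$. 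The $D$-valued moment--cumulant approach sketched above avoids invoking continuous $C(X)$-algebra field theory directly, but still requires careful bookkeeping to show that centrality of $D$ forces the $D$-valued cumulants of the $\theta_i(A)$ to act as scalars on each fiber and match the scalar cumulants of the $\phi_x$.
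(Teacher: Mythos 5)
Your overall strategy coincides with the paper's: both identify $\ZQSS(A)$ with probability measures on $S(A)$ via the barycenter map over the free product states and use Stone--Weierstrass on the algebra generated by $\phi\mapsto(\freeprod_1^\infty\phi)(w)$ to get uniqueness of the representing measure (your injectivity argument is essentially the paper's Choquet-simplex step, packaged as injectivity of $\Phi$). The genuine differences are in the sub-arguments. (1) For the inclusion $\ZQSS(A)\subseteq\Phi(\mathrm{Prob}(S(A)))$ you disintegrate exactly, proving $\psi=\int_X(\freeprod_1^\infty\phi_x)\,d\mu(x)$ fiberwise via central $D$-valued cumulants; the paper only approximates $\rho$ by convex combinations of point evaluations and invokes Lemma~4.2 of~\cite{DK} for each $\ev_x$, which is lighter but yields only membership in the closed convex hull. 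Your stronger disintegration pays off in the tracial case: you deduce that the fibers $\phi_x$ are tracial (this does work --- faithfulness of $\rho\circ F$ on $B$ forces $F(bc)=F(cb)$, hence each $\ev_x\circ F\circ\theta$ is a trace --- though you should run it in the $C^*$-picture of Corollary~\ref{cor:tracial} rather than Theorem~\ref{thm:vNdescr}, since point evaluations on a von Neumann algebraic $\Dc$ are not normal), whereas the paper needs the separate $\int(s-t)^2\,d\omega=0$ moment computation to show the representing measure of a trace is supported on $TS(A)$. (2) You prove closedness and convexity of $\ZQSS(A)$ directly (the stabilization criterion and Lemma~\ref{lem:statesum} both check out, granted $\Tc_\psi=W^*(D_\psi)$ and $D_\psi\subseteq F_\psi(\pi_\psi(\Afr))$ from Theorem~\ref{thm:descr}), while the paper gets both for free from the identity $\ZQSS(A)=\overline{\conv}\{\freeprod_1^\infty\phi\}$.

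There is one step you never address: why $\Phi$ maps \emph{into} $\ZQSS(A)$ at all, i.e.\ why the barycenter of an arbitrary $\nu\in\mathrm{Prob}(S(A))$ is a central quantum symmetric state. This is where the paper does real work, constructing the amalgamated free product of (the GNS quotient of) $C(S(A))\otimes A$ over the central copy of $C(S(A))$ and applying Propositions~\ref{prop:freeqsymm} and~\ref{prop:TinW*D}. In your setup the gap is closable without that model: each $\freeprod_1^\infty\phi$ lies in $\ZQSS(A)$ because its tail algebra is $\Cpx1$ (Proposition~\ref{prop:fp}), the set of these states is compact by the continuity you already use for the $f_w$, and the barycenter of any measure on a compact subset of a compact convex set lies in its closed convex hull, which your step~1 places inside $\ZQSS(A)$. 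You should say this explicitly, since without it ``surjectivity of $\Phi$'' is not even well posed. With that sentence added, and with the fiberwise cumulant bookkeeping you flag carried out (it amounts to reproving Lemma~4.2 of~\cite{DK}), the argument is complete.
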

\begin{proof}
We will focus first on $\ZQSS(A)$ and we will show
\begin{equation}\label{eq:ZQ=CC}
\ZQSS(A)=\overline{\conv}\{*_1^\infty\phi\mid \phi\in S(A)\}.
\end{equation}
To show the inclusion $\subseteq$,
let $\psi\in\ZQSS(A)$
By Theorem~\ref{thm:descr}, $\psi=\rho\circ G\circ(*_1^\infty\theta)$
where
\[
(Y,G)=(*_D)_1^\infty(B,F)
\]
and $\theta:A\to B$ a unital $*$-homomorphism.
By hypothesis, the tail algebra $D$ lies in the center of $Y$;
in particular $D$ is commutative, isomorphic to $C(X)$ for some compact Hausdorff space $X$, and $\rho$ is in the weak$^*$-closed convex
hull of the set of point evaluation maps $\ev_x$, ($x\in X$).
By Lemma~4.2 of~\cite{DK}, each of the compositions $\ev_x\circ G\circ(*_1^\infty\theta)$ is a free product state $*_1^\infty\phi_x$ on $\Afr$,
where $\phi_x=\ev_x\circ F\circ\theta$.
Thus, taking a net of finite convex combinations of the $\ev_x$ that converges in weak$^*$-topology of $C(X)^*$
to $\rho$, we get that the net
of the corresponding convex combinations of the free product states $*_1^\infty\phi_x$ converges in the weak$^*$-topology on
$\Afr^*$
to $\psi$.
This proves $\subseteq$ in~\eqref{eq:ZQ=CC}.

We will now show the inclusion $\supseteq$ in~\eqref{eq:ZQ=CC}.
The mapping 
\begin{equation}\label{eq:fpmap}
S(A)\ni\phi\mapsto*_1^\infty\phi\in S(\Afr)
\end{equation}
is continuous with respect to the respective restrictions of weak$^*$-topologies.
This follows from the facts that the $*$-subalgebra, $\Afr_0$, of $\Afr$
that is generated by $\bigcup_{i=1}^\infty\lambda_i(A)$ is norm dense in $\Afr$, and, by freeness,
for each element $x\in\Afr_0$ there is a list $a_1,\ldots,a_k$ of elements of $A$ and a polynomial $p$ such that
$(*_1^\infty\phi)(x)=p(\phi(a_1),\ldots,\phi(a_k))$.
Thus, $\{*_1^\infty\phi\mid\phi\in S(A)\}$ is a compact subset of $S(\Afr)$ and the mapping~\eqref{eq:fpmap},
since it is clearly injective, is a homeomorphism onto its image.

By a classical result (see, for example, Proposition 1.2 of \cite{Ph01}) using the compactness of the set
\begin{equation}\label{eq:fpst}
\{*_1^\infty\phi\mid \phi\in S(A)\},
\end{equation}
an arbitrary element $\psi$ of the right-hand-side of~\eqref{eq:ZQ=CC} is
the barycenter of a Borel probability measure on the set~\eqref{eq:fpst}, which under 
the mapping~\eqref{eq:fpmap} corresponds to a Borel probability measure $\mu$ on $S(A)$.
Thus, for all $x\in\Afr$ we have
\begin{equation}\label{eq:psimu}
\psi(x)=\int_{S(A)}(*_1^\infty\phi)(x)\,d\mu(\phi).
\end{equation}
We will use this to show that the tail algebra of $\psi$ lies in the center of $\Mcal_\psi$.

Let $C(S(A))\otimes A$ denote the (minimal) $C^*$-algebra tensor product.
Identify
$C(S(A))$ with the unital $C^*$-subalgebra $C(S(A))\otimes 1$ of $C(S(A))\otimes A$.
Let
$E:C(S(A))\otimes A\to C(S(A))$ be the conditional expectation given by
\[
E(f\otimes a)(\phi)=f(\phi)\phi(a),\qquad (f\in C(S(A)),\,a\in A,\,\phi\in S(A)).
\]
Let $B$ be the quotient of $C(S(A))\otimes A$ under the GNS representation
associated to $E$, namely, on the Hilbert $C(S(A))$-module $L^2(C(S(A))\otimes A,E)$.
Then $C(S(A))$ is contained in the center of $B$.
We have
a conditional expectation $F:B\to C(S(A))$ arising from $E$, that we can view as compression with respect to the projection
$L^2(C(S(A))\otimes A,E)\to C(S(A))$.
In fact, (though we will not use this description)
$B$ is a continuous field of $C^*$-algebras over $S(A)$, and the fiber over $\phi\in S(A)$ is isomorphic
to the image of $A$ under the GNS representation of $\phi$.
Let
\begin{equation}\label{eq:BF}
(Y,G)=(*_{C(S(A))})_{i=1}^\infty(B,F)
\end{equation}
be the reduced amalgamated free product of $C^*$-algebras, with amalgamation over $C(S(A))$.
Then $C(S(A))$ lies in the center of $Y$.

Let $\theta:A\to B$ be the unital $*$-homomorphism that is the composition of the mapping $A\to C(S(A))\otimes A$
given by $a\mapsto 1\otimes a$ and the quotient mapping of $C(S(A))\otimes A$ to $B$.

We consider the $*$-homomorphism $*_1^\infty\theta:\Afr\to Y$  sending $\lambda_i(A)$ to the $i$-th copy of $B$ in $Y$ via $\theta$.
Then $\psi=\rho\circ G\circ(*_1^\infty\theta)$, where $\rho$ is the state on $C(S(A))$ given by integration with respect to $\mu$.

Note that $\theta(A)\cup C(S(A))$ generates $B$ as a $C^*$-algebra.
Moreover, 
since the family $E(1\otimes A)$ separates points of $C(S(A))$, by the Stone-Weierstrass Theorem, $E(1\otimes A)$ generates $C(S(A))$,
so $F(B)$ generates $C(S(A))$.
Thus, $(*_1^\infty\theta)(\Afr)=Y$.
Consequently, we may identify $L^2(\Afr,\psi)$ and $L^2(Y,\rho\circ G)$ and thereby identify the images $\pi_\psi(\Afr)$ and $\pi_{\rho\circ G}(Y)$
of the GNS representations.
Furthermore, the tail algebra $\Tc_\psi$ of $\psi$ is identified with
\[
\Tc_{\rho\circ G}:=\bigcap_{N\ge1}W^*(\bigcup_{j\ge N}\pi_{\rho\circ G}(B_i)).
\]
By Proposition~\ref{prop:TinW*D}, $\Tc_{\rho\circ G}$ lies the von Neumann algebra generated by $\pi_{\rho\circ G}(C(S(A)))$, which is in the center of 
$\pi_{\rho\circ G}(Y)''$.
Consequently, $\psi$ is a central quantum symmetric state.
This completes the proof of~\eqref{eq:ZQ=CC};
in particular, $\ZQSS(A)$ is a closed, convex subset of $\QSS(A)$ and is, thus, compact.

{}From Proposition~\ref{prop:fp}, we have that every free product state $*_1^\infty\phi$ is an extreme point of $\QSS(A)$
and is, therefore, an extreme point of $\ZQSS(A)$.
{}From~\eqref{eq:ZQ=CC}, we deduce that there are no other extreme points of $\ZQSS(A)$, and~\eqref{eq:extrZQSSA} is proved.

To show that $\ZQSS(A)$ is a Choquet simplex,
we suppose $\mu$ and $\nu$ are Borel probability measures on $S(A)$ and
\[
\int_{S(A)}(*_1^\infty\phi)(x)\,d\mu(\phi)=
\int_{S(A)}(*_1^\infty\phi)(x)\,d\nu(\phi)
\]
for all $x\in\Afr$, and we will show $\mu$ must equal $\nu$.
Taking $x=\lambda_1(a_1)\lambda_2(a_2)\cdots\lambda_n(a_n)$ for $a_1,\ldots,a_n\in A$, we have
$(*_1^\infty\phi)(x)=\prod_{j=1}^n\phi(a_j)$.
Thus, $\int f\,d\mu=\int f\,d\nu$ for all functions $f$ in the subalgebra of $C(S(A))$ generated by the set
of all functions $S(A)\ni\phi\mapsto\phi(a)$ as $a$ ranges over $A$.
This subalgebra contains the constants and separates points of $S(A)$, so, by the Stone--Weierstrass theorem, it is dense
with respect to the uniform norm in $C(S(A))$.
We deduce $\mu=\nu$, so $\ZQSS(A)$ is indeed a Choquet simplex.

\smallskip
Now we focus on $\ZTQSS(A)$.
{}From $\ZTQSS(A)=\ZQSS(A)\cap\TQSS(A)$, we see that $\ZTQSS(A)$
is a closed, convex subset of $\ZQSS(A)$, and the free product traces
are extreme points of $\ZTQSS(A)$.
{}From what we have already shown about $\ZQSS(A)$, in order to show that $\ZTQSS(A)$ is a Choquet simplex whose
extreme points are the free product traces as in~\eqref{eq:extrZTQSSA},
it will suffice to show that if $\mu$ is a Borel probability meaure on $S(A)$
and if $\psi\in\ZQSS(A)$ given by~\eqref{eq:psimu} is a trace, then $\mu$ is supported in $TS(A)$.
Let $a\in A$ have $\|a\|\le1$ and let $\omega$ be the push-forward measure of $\mu$ under the map
\[
S(A)\to[0,1]^2,\qquad\phi\mapsto\big(\phi(a^*a),\phi(aa^*)\big).
\]
It will suffice to show that $\omega$ is supported in the diagonal of $[0,1]^2$.
Recall the standard notation $|a|=(a^*a)^{1/2}$ and $|a^*|=(aa^*)^{1/2}$.
Letting
\[
x=\lambda_1(|a|)\lambda_2(a),\qquad y=\lambda_1(|a^*|)\lambda_2(a^*),
\]
we get
\begin{alignat*}{2}
(*_1^\infty\phi)(x^*x)&=\phi(a^*a)^2,\qquad&(*_1^\infty\phi)(xx^*)&=\phi(a^*a)\phi(aa^*), \\
(*_1^\infty\phi)(y^*y)&=\phi(aa^*)^2,\qquad&(*_1^\infty\phi)(yy^*)&=\phi(a^*a)\phi(aa^*).
\end{alignat*}
Thus, we have
\begin{align*}
\int_{[0,1]^2}s^2\,d\omega(s,t)=\psi(x^*x)
&=\psi(xx^*)=\int_{[0,1]^2}st\,d\omega(s,t), \\
\int_{[0,1]^2}t^2\,d\omega(s,t)=\psi(y^*y)
&=\psi(yy^*)=\int_{[0,1]^2}st\,d\omega(s,t).
\end{align*}
{}From these identities, we get
$\int(s-t)^2\,d\omega(s,t)=0$ and we conclude that the support of $\omega$ lies in the diagonal of $[0,1]^2$.
\end{proof}

\begin{bibdiv}
\begin{biblist}

\bib{AD09}{article}{
   author={Abadie, Beatriz},
   author={Dykema, Ken},
   title={Unique ergodicity of free shifts and some other automorphisms of $C^\ast$-algebras},
   journal={J. Operator Theory},
   volume={61},
   date={2009},
   pages={279--294},
}

\bib{C09}{article}{
   author={Curran, Stephen},
   title={Quantum exchangeable sequences of algebras},
   journal={Indiana Univ. Math. J.},
   volume={58},
   date={2009},
   pages={1097--1125},
}

\bib{DK}{article}{
  author={Dykema, Ken},
  author={K\"ostler, Claus},
  title={Tail algebras of quantum exchangeable random variables},
  journal={Proc. Amer. Math. Soc.},
  volume={142},
  date={2014},
  pages={3853-3863},
}

\bib{GK09}{article}{
   author={Gohm, Rolf},
   author={K{\"o}stler, Claus},
   title={Noncommutative independence from the braid group ${\mathbb B}_\infty$},
   journal={Comm. Math. Phys.},
   volume={289},
   date={2009},
%   number={2},
   pages={435--482},
%   issn={0010-3616},
%   review={\MR{2506759 (2012d:46152)}},
%   doi={10.1007/s00220-008-0716-x},
}

\bib{HS55}{article}{
   author={Hewitt, Edwin},
   author={Savage, Leonard J.},
   title={Symmetric measures on Cartesian products},
   journal={Trans. Amer. Math. Soc.},
   volume={80},
   date={1955},
   pages={470--501},
%   issn={0002-9947},
%   review={\MR{0076206 (17,863g)}},
}

\bib{Ka05}{book}{  
  author={Kallenberg, Olaf},
  title={Probabilistic Symmetries and Invariance Principles},
  publisher={Springer-Verlag},
  year={2005},
  series={Probability and Its Applications}
}

\bib{K10}{article}{
  author={K\"ostler, Claus},
  title={A noncommutative extended de Finetti theorem},
  journal={J. Funct. Anal.},
  volume={258},
  year={2010},
  pages={1073-1120}
}

\bib{KSp09}{article}{
  author={K\"ostler, Claus},
  author={Speicher, Roland},
  title={A noncommutative de Finetti theorem:
         invariance under quantum permutations is equivalent to freeness with amalgamation},
  journal={Comm. Math. Phys.},
  volume={291},
  year={2009},
  pages={473--490}
}

\bib{L95}{book}{
   author={Lance, E. C.},
   title={Hilbert $C^*$-modules},
   series={London Mathematical Society Lecture Note Series},
   volume={210},
   note={A toolkit for operator algebraists},
   publisher={Cambridge University Press, Cambridge},
   date={1995},
%   pages={x+130},
%   isbn={0-521-47910-X},
%   review={\MR{1325694 (96k:46100)}},
%   doi={10.1017/CBO9780511526206},
}

\bib{Liu}{misc}{
   author={Liu, Weihua},
   note={private communication},
}

\bib{Ph01}{book}{
   author={Phelps, Robert R.},
   title={Lectures on Choquet's theorem},
   series={Lecture Notes in Mathematics},
   volume={1757},
   edition={2},
   publisher={Springer-Verlag},
   place={Berlin},
   date={2001},
%   pages={viii+124},
%   isbn={3-540-41834-2},
%   review={\MR{1835574 (2002k:46001)}},
%   doi={10.1007/b76887},
}

\bib{Sa71}{book}{
  author={Sakai, Shoichiro},
  title={$C^*$-algebras and $W^*$-algebras},
  publisher={Springer--Verlag},
  year={1971}
}

\bib{Sp98}{book}{
  author={Speicher, Roland},
  title={Combinatorial Theory of the Free Product with Amalgamation and Operator-Valued Free Probability Theory},
  series={Mem. Amer. Math. Soc.},
  volume={627},
  year={1998}
}

\bib{St69}{article}{
  author={St\o{}rmer, Erling},
  title={Symmetric states of infinite tensor products of $C^*$-algebras},
  journal={J. Funct. Anal.},
  volume={3},
  year={1969},
  pages={48--68}
}

% \bib{U99}{article}{
%    author={Ueda, Yoshimichi},
%    title={Amalgamated free product over Cartan subalgebra},
%    journal={Pacific J. Math.},
%    volume={191},
%    date={1999},
%    pages={359--392},
% }

\bib{V85}{article}{
  author={Voiculescu, Dan},
  title={Symmetries of some reduced free product $C^{\ast}$-algebras},
  conference={
    title={Operator Algebras and Their Connections with Topology and Ergodic Theory},
    address={Busteni},
    date={1983},
  },
  book={
    series={Lecture Notes in Mathematics}, 
    volume={1132},
    publisher={Springer--Verlag},
    date={1985}
  },
  pages={556--588}
}

\bib{VDN92}{book}{
  author={Voiculescu, Dan},
  author={Dykema, Ken},
  author={Nica, Alexandru},
  title={Free random variables},
  series={CRM Monograph Series},
  volume={1},
  publisher={American Mathematical Society},
  address={Providence, RI},
  year={1992}
}

\bib{W98}{article}{
  author={Wang, S.},
  title={Quantum symmetry groups of finite spaces},
  journal={Comm. Math. Phys.},
  volume={195},
  pages={195--211},
  year={1998}
}

\end{biblist}
\end{bibdiv}

\end{document}